\begin{document}
\setlength{\baselineskip}{16pt}

\parindent 0.5cm
\evensidemargin 0cm \oddsidemargin 0cm \topmargin 0cm \textheight 22cm \textwidth 16cm \footskip 2cm \headsep
0cm

\newtheorem{theorem}{Theorem}[section]
\newtheorem{lemma}{Lemma}[section]
\newtheorem{proposition}{Proposition}[section]
\newtheorem{definition}{Definition}[section]
\newtheorem{example}{Example}[section]
\newtheorem{corollary}{Corollary}[section]

\newtheorem{remark}{Remark}[section]
\numberwithin{equation}{section}

\def\p{\partial}
\def\I{\textit}
\def\R{\mathbb R}
\def\C{\mathbb C}
\def\u{\underline}
\def\l{\lambda}
\def\a{\alpha}
\def\O{\Omega}
\def\e{\epsilon}
\def\ls{\lambda^*}
\def\D{\displaystyle}
\def\wyx{ \frac{w(y,t)}{w(x,t)}}
\def\imp{\Rightarrow}
\def\tE{\tilde E}
\def\tX{\tilde X}
\def\tH{\tilde H}
\def\tu{\tilde u}
\def\d{\mathcal D}
\def\aa{\mathcal A}
\def\DH{\mathcal D(\tH)}
\def\bE{\bar E}
\def\bH{\bar H}
\def\M{\mathcal M}
\renewcommand{\labelenumi}{(\arabic{enumi})}

\def\disp{\displaystyle}
\def\undertex#1{$\underline{\hbox{#1}}$}
\def\card{\mathop{\hbox{card}}}
\def\sgn{\mathop{\hbox{sgn}}}
\def\exp{\mathop{\hbox{exp}}}
\def\OFP{(\Omega,{\cal F},\PP)}
\newcommand\JM{Mierczy\'nski}
\newcommand\RR{\ensuremath{\mathbb{R}}}
\newcommand\CC{\ensuremath{\mathbb{C}}}
\newcommand\QQ{\ensuremath{\mathbb{Q}}}
\newcommand\ZZ{\ensuremath{\mathbb{Z}}}
\newcommand\NN{\ensuremath{\mathbb{N}}}
\newcommand\PP{\ensuremath{\mathbb{P}}}
\newcommand\abs[1]{\ensuremath{\lvert#1\rvert}}
\newcommand\normf[1]{\ensuremath{\lVert#1\rVert_{f}}}
\newcommand\normfRb[1]{\ensuremath{\lVert#1\rVert_{f,R_b}}}
\newcommand\normfRbone[1]{\ensuremath{\lVert#1\rVert_{f, R_{b_1}}}}
\newcommand\normfRbtwo[1]{\ensuremath{\lVert#1\rVert_{f,R_{b_2}}}}
\newcommand\normtwo[1]{\ensuremath{\lVert#1\rVert_{2}}}
\newcommand\norminfty[1]{\ensuremath{\lVert#1\rVert_{\infty}}}

\newcommand{\ds}{\displaystyle}

\allowdisplaybreaks

\title{Transition Fronts of Fisher-KPP Equations in Locally Spatially Inhomogeneous Patchy Environments I: Existence and Non-existence}

\author{
Erik S. Van Vleck\thanks{Erik Van Vleck Email: erikvv@ku.edu }\\
Department of Mathematics\\
University of Kansas\\
Lawrence, KS 66045, U.S.A. \\
\\
and \\
\\
Aijun Zhang\thanks{CORRESPONDENCE AUTHOR Aijun Zhang Email: zhangai@tigermail.auburn.edu.}\\
Department of Mathematics\\
University of Louisiana at Lafayette\\
Lafayette, LA 70504, U.S.A.}

\date{}
\maketitle

\noindent {\bf Abstract.}
This paper is devoted to the study of spatial propagation dynamics of species in locally spatially inhomogeneous patchy environments or media. For a lattice differential equation with monostable nonlinearity in a discrete homogeneous media, it is well-known that there exists a minimal wave speed such that a traveling front exists if and only if the wave speed is not slower than this minimal wave speed. We shall show that strongly localized spatial inhomogeneous patchy environments may prevent the existence of transition fronts (generalized traveling fronts). Transition fronts may exist in weakly localized spatial inhomogeneous patchy environments but only in a finite range of speeds, which implies that it is plausible to obtain a maximal wave speed of  existence of transition fronts.

\bigskip

\noindent {\bf Key words.} Monostable; Fisher-KPP equations; transition fronts; discrete heat kernel; discrete parabolic Harnack inequality; Jacobi operators;
lattice differential equation.

\bigskip

\noindent {\bf Mathematics subject classification.} 39A12, 34K31, 35K57, 37L60

\section{Introduction}
\setcounter{equation}{0}
Front propagation occurs in many applied fields such as population dispersals in biology, combustion in chemistry, neuronal waves in neuroscience, fluid dynamics in physics and more. Since the pioneering work of Fisher (\cite{Fisher}) and Kolmogorov-Petrovskii-Piskunov (\cite{KPP}), front propagation dynamics of classical reaction-diffusion equation

\vspace{-.1in}\begin{equation}
\label{RD-eq}
u_{t}(t,x)=u_{xx}+f(x,u)u, x\in\RR
\vspace{-.1in}\end{equation}
and lattice differential equation
\vspace{-.1in}\begin{equation}
\label{main-eq}
\dot{u}_{j}(t)=u_{j+1}-2u_{j}+u_{j-1}+f_j(u_{j})u_j,\quad j \in \ZZ.
\vspace{-.1in}\end{equation}
have been studied extensively. In biology \eqref{RD-eq} is used to model the spread of population in non-patchy environment with random internal interaction of the organisms and \eqref{main-eq} is for species in patchy environment with nonlocal internal interaction of the organisms. Here we focus on \eqref{main-eq}. For nonlinearity term $f_j(u_j)$, we assume that

\medskip
\noindent{\bf (H1)}  {\it $f_j\in C^2([0,\infty),\RR)$,
 $\ds-L<\inf_{j\in\ZZ,v\geq 0}\{f_j'(v)\}\leq\sup_{j\in\ZZ,v \geq 0}\{f_j'(v)\}<0$ for all $(j, v) \in \ZZ \times \RR^+$ with some $L>0$ and
 $f_j(v)<0$ for all $(j, v) \in \ZZ \times \RR^+$ with $v > L_0$ for some $L_0>0$.}
%
 
In the literature, (H1) is called Fisher-KPP type nonlinearity due to Fisher (\cite{Fisher}) and Kolmogorov-Petrovskii-Piskunov (\cite{KPP}).
However, most existing works are concerned with the propagation dynamics in homogeneous or spatially periodic media. Fisher (\cite{Fisher}) and Kolmogorov-Petrovskii-Piskunov (\cite{KPP}) considered a homogenous case of \eqref{RD-eq}, that is, $f(x,u)=f(u)=1-u$. Fisher conjectured and Kolmogorov-Petrovskii-Piskunov proved that there exist traveling fronts of speeds not less than the minimal wave speed $c^*=2$, which is a solution of \eqref{RD-eq} of form $u(t,x)=\phi(x-ct)$, $\phi(-\infty)=1$ and $\phi(\infty)=0$. Later, existence of periodic traveling waves of \eqref{RD-eq} or more general reaction diffusion equations with Fisher-KPP nonlinearity has been studied by researchers including B. Zinner and his collaborators in 1995 (\cite{HuZi1}), H.F.Weinberger in 2002 (\cite{Wei}), and H. Berestycki et al. in 2005 (\cite{BeHaNa1}). For the case in non-periodic inhomogeneous media, we can not expect wave profiles that take the form of constant or periodic front profiles. The notation of traveling waves has been extended to generalized traveling waves or transition fronts by several authors (e.g., \cite{BeHa},\cite{Shen}). In the past decade, transition fronts in non-periodic inhomogeneous media have attracted much attention (e.g., \cite{BeHa}, \cite{NoRoRyZl}, \cite{Zlatos}). For instance, J. Nolen et al. considered in \cite{NoRoRyZl} the KPP equation of one dimension with random dispersal (classic reaction-diffusion equation) in compactly supported inhomogeneous media. More precisely, they considered \eqref{RD-eq} in the media which are localized perturbations of the homogeneous media. They showed that localized KPP inhomogeneity may prevent the existence of transition fronts and provided some examples that transition fronts may not exist.

The discrete system \eqref{main-eq} has also been the subject of much research attention. The past two decades have seen vigorous research activities on applications to dynamics on lattice differential equations \cite{CMPVV99,ChenGuo,ChenGuo1,CGW,ChowMPShen,GuoWu,HHVV15,HHVV17}. In numerical simulations, lattice differential equations have some advantages over classical reaction-diffusion equations in applications. For example, \eqref{main-eq} can be viewed as the spatial discretization of \eqref{RD-eq}. On the other hand, lattice differential equations are of interest as models in their own right. It is more reasonable to model some problems with spatial discrete structure such as population dispersal in a patchy environment by lattice differential equations. The main concerns include also the properties of spreading speed and propagation of waves such as traveling fronts, periodic(pulsating) traveling waves and transition fronts. For homogeneous or periodic discrete media with monostable or bistable nonlinearities, we refer the readers to \cite{CMPVV99,ChenGuo,ChenGuo1,CGW,HHVV15,HHVV17}. The simplest case of transition fronts are traveling waves whose profiles are time-independent, that is, there exists some function $\phi$ such that
\vspace{-.1in}\begin{equation}
\label{tws}
u_{j}(t)=\phi(j-ct), \phi(\infty)=0 \ \text{and} \  \phi(-\infty)=1,
\vspace{-.1in}\end{equation}
where $c$ is the wave speed. For the homogenous case with $f_j(u_{j})=1-u_j$, it is almost trivial that there exists a minimal wave speed $c^*$ such that a traveling wave exists if and only if the wave speed $c \geq c^*$. Later, the periodic traveling wave solutions have been investigated in \cite{GuHa,HudsonZinner} for the Fisher-KPP equation in periodically inhomogeneous media, where the periodic traveling wave solutions $u_{j}(t)$ to lattice differential equations such as \eqref{main-eq} satisfy the following
\vspace{-.1in}\begin{equation}
\label{ptws}
u_{j}(t+p/c)=u_{j-p}(t), \lim_{j \to -\infty}u_{j}(t)=1 \ \text{and} \ \lim_{j \to \infty}u_{j}(t)=0 \ \text{locally in} \ t \in \RR.
\vspace{-.1in}\end{equation}
Work on entire solutions or transition fronts for bistable reaction-diffusion equations in discrete media includes \cite{HHVV17,HMVV11}. However, less is known to the spreading dynamics to \eqref{main-eq} with Fisher-KPP nonlinearity in non-periodic inhomogeneous media.

Kong and Shen considered in \cite{KoSh1} the KPP equations of higher dimension with nonlocal, random or discrete dispersal in localized perturbations of the homogeneous media and investigate in \cite{KoSh2} the KPP equations with nonlocal, random or discrete dispersal in localized perturbations of the periodic media. They showed that the localized spatial inhomogeneity of the medium preserve the spatial spreading in all the directions. The lower bound of mean wave speed of \eqref{main-eq} can be obtained due to the spreading properties proved in \cite{KoSh2} and in \cite{KoSh1} for the particular case in localized perturbations of the homogeneous media. However, the existence and (general) non-existence of transition fronts have not yet been investigated for discrete dispersals.

We will focus on the study of existence and non-existence of transition fronts of \eqref{main-eq} with Fisher-KPP type nonlinearity in localized perturbations of spatially homogeneous patchy environments or media. Hereafter, we assume the following:

 \medskip

\noindent{\bf (H2)}  {\it $f_j(0)>0$ for all $j$ and $f_j(0)=1$
 for any $|j|>N$ with some positive integer $N$.}

 \medskip

Throughout the paper, we assume (H1)-(H2). Let $\Lambda : \mathcal{D}(\Lambda) \subset X \to X$ be defined by
\vspace{-.1in}\begin{equation}
\label{mainoperator}
 (\Lambda u)_{j}:= u_{j+1}-2u_{j}+u_{j-1}+ f_j(0)u_{j}, \forall u \in X,
\vspace{-.1in}\end{equation}
where $X=\{u | |u_j| < L, \text{for some $L>0$ and all $j \in \ZZ$}\}$ with norm $\|u\|_{X}=\ds\sup_{j\in \ZZ} \{|u_j|\}$.

Let $\lambda=\sup\big\{\mathbf{Re}\ \mu\  |\ \mu \in\sigma(\Lambda)\big\}$. Let $\{u_j^*\}_{j \in \ZZ}$ be the unique positive stationary solution of \eqref{main-eq}, where the existence of $\{u_j^*\}_{j \in \ZZ}$ was proved in Theorem 2.1 of \cite{KoSh1} by Kong and Shen under the assumptions of (H1) and (H2). To study the propagation wave solutions in localized perturbations in patchy media, we will extend the traveling front of \eqref{tws} in homogeneous media and the periodic traveling front of \eqref{ptws} in periodic media and define transition fronts of \eqref{main-eq} and their mean speeds as follows:

 \begin{definition}[Transition Front]
\label{Transition-Fronts-def}
 $\{u_{j}(t)\}_{j \in \ZZ}$ is called a transition front of \eqref{main-eq} if it is an entire solution such that $0\leq u_{j}(t)\leq u_j^*$,
 $\ds\lim_{j \to -\infty} (u_{j}(t)-u_j^*)=0$ and $\ds\lim_{j \to \infty} u_{j}(t)=0$;
\end{definition}

 \begin{definition}[Mean Wave Speed]
 \label{meanspeed-def}
The value
$c$ is called the mean wave speed of the transition front given by $c=\ds\lim_{|t_j-t_k| \to \infty}\frac{j-k}{t_{j}-t_{k}}$, where $t_{i}$ is the time such that $\ds u_{i}(t_{i})=\frac{1}{2}\min_{j} \{u_j^*\}$ for $i \in \ZZ$ and $\ds u_{l}(t_{i})< \frac{1}{2}\min_{j} \{u_j^*\}$ for all $l>i$.
\end{definition}
In the current study, our main result shows conditions for both existence and nonexistence of transition fronts of \eqref{main-eq} for lattice differential KPP equation in patchy environment with a localized perturbation in media. There are several essential difference between classic reaction differential equations and lattice differential equations. Among these fundamental techniques are heat kernel estimate, Poincar$\acute{e}$ inequality, Harnack inequality and principal eigenvalue theory. We shall introduce discrete versions of these fundamental tools in later sections. Because of those significant differences, the approaches for classical reaction diffusion equations in \cite{NoRoRyZl} can not be applied directly to \eqref{main-eq}, that is a continuous-time discrete in space lattice differential equation. 
In this paper, we consider transition fronts in the localized perturbed homogeneous patchy media, and provide the variational formulas for both the upper bound and the lower bound of the wave speeds that transition fronts exist. 

Throughout the rest of paper, let $\lambda(\mu)=e^{\mu}-1+e^{-\mu}$ for $\mu > 0$. We have an auxiliary function for the wave speed, $c(\mu)=\frac{\lambda(\mu)}{\mu}$ for $\mu>0$. Let  $(c^*, \mu^*)$ be such that $c^*=\ds\frac{\lambda(\mu^*)}{\mu^*}=\inf_{\mu>0}\frac{\lambda(\mu)}{\mu}$. In literature, $c^*$ is so called spreading speed, that is the minimal speed such that a traveling solution may exist. We explore the minimal speed $c^*$ in Section \ref{lower bound section}. Let $\lambda^{*}=\lambda(\mu^*)$ and $(\hat{c}, \hat{\mu})$ be such that $\lambda=\lambda(\hat{\mu})$ and $\hat{c}=c(\hat{\mu})$. The $\hat{c}$ is corresponding to the maximal speed such that a traveling solution may exist (see Section \ref{upper bound section}). We state the main theorem in the following.

\begin{theorem}[Existence and Non-Existence of Transition Fronts]
\label{Existence and Nonexistence}
Assume (H1)-(H2).
\begin{itemize}
\item[(1)] If $\lambda \in [1,\lambda^{*})$ and $\hat{c}>c^{*}$, then transition front exists for any speed $c \in [c^{*}, \hat{c}]$. Moreover, if $c \in (c^*,\hat c]$, then for any $\epsilon>0$, there exist $C_1,\ C_2,\ T>0$ such that for $t>T$ and $j>ct$,
\begin{equation}
\label{tail_estimate_t0}
C_1e^{-(\mu+\epsilon)(j-ct)} \leq u_j(t) \leq C_2e^{-(\mu-\epsilon)(j-ct)}.
\end{equation}
\item[(2)] No transition front with speed c exists for the following cases: (i) $\lambda >\lambda^{*}$; (ii)$c<c^{*}$ and (iii) $c > \hat{c}$.
\end{itemize}
\end{theorem}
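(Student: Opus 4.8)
The plan is to build the existence part (1) by the standard Fisher-KPP strategy of constructing super- and subsolutions out of the exponential solutions of the linearized operator $\Lambda$, and to prove the non-existence part (2) by comparison arguments that extract the spreading speed $c^{*}$ from below and a Harnack-type/decay-rate obstruction from above. For part (1), note that $v_j(t)=e^{-\mu(j-c(\mu)t)}$ solves the linear lattice equation $\dot v_j=v_{j+1}-2v_j+v_{j-1}+v_j$ exactly when $c=c(\mu)=\lambda(\mu)/\mu$, using $\lambda(\mu)=e^{\mu}-1+e^{-\mu}$. The hypothesis $\lambda\in[1,\lambda^{*})$ with $\hat c>c^{*}$ guarantees that for each $c\in[c^{*},\hat c]$ there is an admissible decay rate $\mu=\mu(c)\in(0,\hat\mu]$ realizing that speed, and moreover that this $\mu$ stays in the spectral regime where the stationary solution $\{u_j^{*}\}$ (Theorem 2.1 of \cite{KoSh1}) governs the left state. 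First I would fix $c\in(c^{*},\hat c]$, take the supersolution $\bar u_j(t)=\min\{u_j^{*},\,C_2 e^{-(\mu-\epsilon)(j-ct)}\}$ and the subsolution $\underline u_j(t)=\max\{0,\,C_1 e^{-\mu(j-ct)}-C_1' e^{-(\mu+\epsilon)(j-ct)}\}$ (the usual two-exponential trick, with $C_1'$ large and $\epsilon$ small enough that $\mu+\epsilon$ is still admissible), verify the differential inequalities using (H1) (in particular $f_j(v)\le f_j(0)$ and the uniform Lipschitz bound $L$) and the localized structure (H2), and then obtain a transition front between them by the standard limiting/monotone-iteration argument on expanding time intervals, as in \cite{NoRoRyZl, KoSh2}. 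The tail estimate \eqref{tail_estimate_t0} then falls out of the squeeze between $\bar u$ and $\underline u$. The borderline case $c=c^{*}$ (with $\mu=\mu^{*}$, logarithmic correction) is handled separately by the classical modification of the subsolution with an algebraic prefactor.

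For part (2)(ii), $c<c^{*}$: suppose a transition front with mean speed $c$ exists. Using the spreading result of \cite{KoSh1} (localized perturbation of homogeneous media preserves spreading at speed $c^{*}$ in both directions) together with the comparison principle, any solution that is bounded below near $-\infty$ must spread to the right at asymptotic speed at least $c^{*}$; this forces the level sets $\{t_j\}$ of Definition 1.3 to satisfy $(j-k)/(t_j-t_k)\ge c^{*}$ in the limit, contradicting $c<c^{*}$. For part (2)(iii), $c>\hat c$, and for part (2)(i), $\lambda>\lambda^{*}$: here the obstruction is the decay rate. A transition front with speed $c$ must, by the linear ODE comparison against $\Lambda$, decay at its leading edge no faster than the slowest admissible exponential and no slower than the fastest; but the set of $\mu>0$ with $\lambda(\mu)/\mu=c$ and $\lambda(\mu)\le\lambda$ (equivalently $\mu\le\hat\mu$), which is what the coexistence of the KPP tail with the left state $u^{*}$ requires, is empty when $c>\hat c$ or when $\lambda>\lambda^{*}$ (in the latter case $\lambda>\lambda(\mu^{*})$ pushes the required rate past where $c(\mu)$ can be $\ge c^{*}$ at all). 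Making this precise is where the discrete Harnack inequality and the principal-eigenvalue theory for the Jacobi operator $\Lambda$ enter: one shows the front's tail $u_j(t)e^{\mu(j-ct)}$ is, via Harnack, comparable along the moving frame, pins down $\mu$, and then derives a contradiction with the global positivity/boundedness $0\le u_j(t)\le u_j^{*}$.

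I would organize the write-up as: (a) the linear exponential solutions and the geometry of $c(\mu)$, $\lambda(\mu)$ (a short lemma identifying $(c^{*},\mu^{*})$ and $(\hat c,\hat\mu)$); (b) construction and verification of $\bar u,\underline u$; (c) the limiting argument producing the front and the two-sided tail bound; (d) the spreading-based lower-speed obstruction; (e) the decay-rate/Harnack-based upper-speed and $\lambda>\lambda^{*}$ obstructions. The main obstacle I anticipate is step (e): unlike the continuous case in \cite{NoRoRyZl}, the discrete heat kernel and the Harnack inequality for the Jacobi operator have to be invoked in exactly the right quantitative form to convert "the KPP tail cannot match the admissible decay rates" into a rigorous contradiction, and the interaction between the localized inhomogeneity (H2) and the spectral quantity $\lambda=\sup\mathbf{Re}\,\sigma(\Lambda)$ must be controlled carefully — in particular showing that $\lambda>\lambda^{*}$ genuinely destroys \emph{all} transition fronts rather than merely those of some speeds. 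The remaining steps are adaptations of now-standard Fisher-KPP technology to the lattice setting and should go through with the tools the paper promises to develop (discrete heat kernel estimate, discrete parabolic Harnack inequality, principal eigenvalue theory).
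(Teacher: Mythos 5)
There is a genuine gap in your existence argument, and it sits exactly where the localized inhomogeneity (H2) matters. Your proposed super-solution $\bar u_j(t)=\min\{u_j^{*},\,C_2 e^{-(\mu-\epsilon)(j-ct)}\}$ and sub-solution built from pure exponentials cannot satisfy the required differential inequalities on the perturbed patch $|j|\le N$: there $f_j(0)=a_j$ need not equal $1$, so $e^{-\mu(j-ct)}$ is not even a solution of the linearization, and when $a_j>1$ the super-solution inequality $\dot v_j\ge v_{j+1}-2v_j+v_{j-1}+a_jv_j$ fails outright. (Separately, the decay rate $\mu-\epsilon$ is wrong even in the homogeneous region: since $c(\cdot)$ is decreasing on $(0,\mu^*]$ one has $(\mu-\epsilon)c<\lambda(\mu-\epsilon)$, so that profile is a sub-, not a super-solution of the linear equation at speed $c=c(\mu)$.) The paper's essential ingredient, which your proposal omits, is the positive principal eigenvector $\phi^{\mu}$ of the exponentially twisted Jacobi operator $(\Lambda_\mu\phi)_j=e^{-\mu}\phi_{j+1}-2\phi_j+e^{\mu}\phi_{j-1}+a_j\phi_j$ with eigenvalue $\lambda(\mu)$: the super-solution is $e^{-\mu(j-ct)}\phi^{\mu}_j$ and the sub-solution is $e^{-\mu(j-ct)}\phi^{\mu}_j-d_1e^{-\mu_1(j-ct)}\phi^{\mu_1}_j$ with $\mu<\mu_1<\min\{2\mu,\mu^*\}$. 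Establishing positivity of $\phi^{\mu}$ (via oscillation theory for Jacobi operators) is precisely what forces $\mu\in[\hat\mu,\mu^*]$, i.e.\ $\lambda(\mu)\ge\lambda$, and this is where the hypothesis $\lambda\in[1,\lambda^*)$ and the admissible speed window $[c^*,\hat c]$ actually come from. Without this eigenvector your construction does not close, and the role of $\hat\mu$ in part (1) is left unexplained. Your tail estimate also only "falls out of the squeeze" for the constructed front; the paper proves two-sided exponential tail bounds for \emph{any} transition front via the Duhamel formula and discrete heat kernel estimates, which is a separate and substantial argument.

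For part (2), your treatment of $c<c^*$ via the Kong--Shen spreading results matches the paper and is fine. But for $c>\hat c$ and $\lambda>\lambda^*$ your sketch records the correct intuition ("the admissible decay rates are empty") without the mechanism that converts it into a contradiction. The paper's route is: for $\lambda>\lambda^*$, a backward-in-time upper bound $u_j(t)\le K e^{-\mu^*(|j|-ct)}$ for $t\le 0$ valid on \emph{both} sides of the patch (built from truncated principal eigenvalues $\lambda_M\uparrow\lambda$, a heat-kernel lower bound, and comparison), which is incompatible with $u_j(t)\to u_j^*$ as $j\to-\infty$; for $c>\hat c$, an upper bound $u_j(t)\le K_\epsilon e^{\lambda(\hat\mu-\epsilon)t-(\hat\mu-\epsilon)j}$ for $t\le0$, $j\ge0$, propagated forward to give decay rate $\hat\mu-\epsilon$ at the leading edge, contradicting the lower tail bound of rate $\mu+\epsilon<\hat\mu-\epsilon$ that every transition front of speed $c$ must satisfy. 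You correctly identify this as the hard step, but as written it is a plan rather than a proof.
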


This paper is organized as follows. In Section 2, we provide the discrete analogs of fundamental tools in classical reaction diffusion equations, including semigroup theory, comparison principles, discrete heat kernel, discrete parabolic Harnack inequality and many others. In Section 3, we investigate the principal eigenvalue theory and construct the super/sub-solutions. Then we show the existence of transition fronts and also the tail estimates of transition fronts \eqref{tail_estimate_t0}, that is, proof of Theorem \ref{Existence and Nonexistence} (1). In Section 4, we show nonexistence of transition fronts under  $\lambda >\lambda^{*}$, the lower bound of wave speeds (minimal wave speed $c^*$), and the upper bound of wave speeds (maximal wave speed $\hat c$), that is proof of Theorem \ref{Existence and Nonexistence} (2). In Section 5, we provide a particular example with the simplest case: a perturbation at a single location. Finally, we provide some concluding remarks in Section 6.

\section{Foundations of Lattice Differential Equations }
\subsection{Initial Value Problem}
 Let $X^+=\{u\in X| u_j \geq 0, \forall j \in \ZZ\}$. Let $\Lambda$ be as in \eqref{mainoperator}. It follows from the general semigroup approach (see \cite{Paz}) that $\Lambda$ generates a uniformly continuous semigroup $T(t)$ and \eqref{main-eq} has a unique (local) solution $u(t;z)$ with $u(0)=\{z_{j}\}_{j \in \ZZ}$ for every $z\in X$, that is given by
\vspace{-.1in}\begin{equation}
\label{IVP-solution}
u(t)=T(t)u(0)-\int_{0}^{t}T(t-s)g(s)ds,t>0,
\vspace{-.1in}\end{equation}
where $g_j(s)=(f_j(u_j)-f_j(0))u_j$ for $j \in \ZZ$, $g(s)=\{g_{j}(s)\}_{j \in \ZZ}$, and $u(t)=\{u_{j}(t)\}_{j \in \ZZ}$.

\subsection{Comparison Principle}
We introduce comparison principle in this subsection, which will play an important role in obtaining the existence of transition fronts of \eqref{main-eq}. We define super/sub-solutions and state the comparison principle as follows.
\begin{definition}[Super/Sub-Solution]
\label{superSub-def}
For a given continuous-time and bounded function $u_{j} : [0, T ) \to \RR$, $\{u_j\}_{j \in \ZZ}$ is called a
super-solution (sub-solution) of \eqref{main-eq} on $[0,T)$ if for all $j$,
$\dot u_{j}(t) \geq (\leq)u_{j+1}-2u_{j}+u_{j-1} + f_{j}(u_{j})u_j$.
\end{definition}
\begin{proposition}[Comparison Principle]
\label{monotonicity-new-prop1} $\quad$
\begin{itemize}
\item[(1)]
If $u(t)$ and $v(t)$ are sub-solution and super-solution of \eqref{main-eq} on $[0,T)$,
respectively, $u_j(0)\leq v_j(0)$, then
$$
u_j(t)\leq v_j(t)\quad {\rm for}\quad t\in [0,T).
$$
Moreover, if $u_j(0)\neq v_j(0)$ for some $j$, then for all $j$,
$$
u_j(t)< v_j(t)\quad {\rm for}\quad t\in (0,T).
$$
\item[(2)] If  $z, w \in X$ and $z \leq w$, then $u_j(t;z)\leq u_j(t;w)$ for $t>0$
at which both $u(t;z)$ and $u(t;w)$ exist. Moreover, if $z_{j} \neq w_{j}$ for some $j$, then for all $j$, $u_j(t;z)< u_j(t;w)$ for $t>0$
 at which both $u(t;z)$ and $u(t;w)$ exist.
\end{itemize}
\end{proposition}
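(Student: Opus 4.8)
\emph{Proof proposal.} The plan is to deduce both parts from a weak and a strong maximum principle for the difference $w_j(t):=v_j(t)-u_j(t)$ of a super-solution $v$ and a sub-solution $u$. Subtracting the two defining differential inequalities gives $\dot w_j\ge w_{j+1}-2w_j+w_{j-1}+\bigl(f_j(v_j)v_j-f_j(u_j)u_j\bigr)$. Since $u$ and $v$ are bounded, say with values in an interval $[0,M]$ (if a super- or sub-solution were to leave $[0,\infty)$ one first replaces each $f_j$ by a globally Lipschitz extension to $\RR$, which changes neither the statement nor the argument), the map $s\mapsto f_j(s)s$ is Lipschitz on $[0,M]$ with a constant $K$ independent of $j$; this uses that (H1) makes $\{f_j\}_{j\in\ZZ}$ equi-Lipschitz and that $\sup_j|f_j(0)|<\infty$ by (H2). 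Writing $f_j(v_j)v_j-f_j(u_j)u_j=c_j(t)\,w_j$ with $\sup_{j,t}|c_j(t)|\le K$, one is reduced to the linear differential inequality $\dot w_j\ge w_{j+1}-2w_j+w_{j-1}+c_j(t)\,w_j$ with $w_j(0)\ge 0$.

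\textbf{Weak comparison.} Here the point is to prove $w_j(t)\ge 0$, and the only genuine difficulty is that $\ZZ$ is infinite, so an infimum over $j$ need not be attained. I would fix $T'<T$, choose $\delta>0$ small and $\gamma$ with $\gamma>K+(e^{\delta}+e^{-\delta}-2)$, set $\phi_j:=\cosh(\delta j)$ (so that $\phi_{j+1}-2\phi_j+\phi_{j-1}=(e^{\delta}+e^{-\delta}-2)\phi_j$), and put $W_j^{\varepsilon}(t):=w_j(t)+\varepsilon\,\phi_j\,e^{\gamma t}$ for $\varepsilon>0$. Because $w$ is bounded and $\phi_j\to\infty$, $W_j^{\varepsilon}(t)\to+\infty$ as $|j|\to\infty$ uniformly on $[0,T']$, so $t\mapsto\inf_jW_j^{\varepsilon}(t)$ is a minimum over a fixed finite range of indices and is continuous. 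A direct computation gives $\dot W_j^{\varepsilon}\ge W_{j+1}^{\varepsilon}-2W_j^{\varepsilon}+W_{j-1}^{\varepsilon}+c_j(t)W_j^{\varepsilon}+\varepsilon\phi_je^{\gamma t}\bigl(\gamma-(e^{\delta}+e^{-\delta}-2)-c_j(t)\bigr)$, and the last bracket is strictly positive by the choice of $\gamma$. Since $W_j^{\varepsilon}(0)\ge\varepsilon>0$, if $W^{\varepsilon}$ ever failed to stay positive there would be a first time $\tau\in(0,T']$ with $\min_jW_j^{\varepsilon}(\tau)=0$, attained at some $j_0$; then $W_{j_0\pm1}^{\varepsilon}(\tau)\ge 0=W_{j_0}^{\varepsilon}(\tau)$ forces $\dot W_{j_0}^{\varepsilon}(\tau)>0$, which contradicts $W_{j_0}^{\varepsilon}(t)>0$ for $t<\tau$ together with $W_{j_0}^{\varepsilon}(\tau)=0$. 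Hence $W_j^{\varepsilon}>0$ on $[0,T']$ for every $\varepsilon>0$, and letting $\varepsilon\downarrow 0$ gives $w_j(t)\ge 0$, i.e.\ $u_j(t)\le v_j(t)$ on $[0,T)$.

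\textbf{Strong comparison and part (2).} Suppose $w_{j^*}(0)>0$ for some $j^*$. Since $w\ge 0$, dropping one of the two nonnegative neighbours yields, for every $j$ and either neighbour $j'$ of $j$, $\dot w_j\ge w_{j'}-(2+K)w_j$, hence $\frac{d}{dt}\bigl(w_j(t)e^{(2+K)t}\bigr)\ge w_{j'}(t)e^{(2+K)t}$. An induction on $n=|j-j^*|$ then gives $w_j(t)>0$ for all $j$ and all $t\in(0,T)$: for $n=0$ one has $w_{j^*}(t)e^{(2+K)t}\ge w_{j^*}(0)>0$; for the step, integrate the last inequality from $0$ with $j'$ chosen one step closer to $j^*$, whose strict positivity on $(0,t]$ is already known, to conclude $w_j(t)>0$ on $(0,T)$. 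This proves part (1). Part (2) then follows immediately: if $z\le w$ in $X$, then on their common interval of existence $u(\cdot;z)$ and $u(\cdot;w)$ are bounded solutions of \eqref{main-eq}, hence in particular a sub-solution and a super-solution respectively, with ordered initial data, so part (1) gives $u_j(t;z)\le u_j(t;w)$, strictly for all $j$ and all $t>0$ whenever $z_{j^*}\ne w_{j^*}$ for some $j^*$.

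\textbf{Main obstacle.} The crux is the weak comparison step on the unbounded lattice: the naive ``evaluate at the minimum'' maximum-principle argument breaks down because $\inf_jW_j^{\varepsilon}$ need not be attained. The exponentially growing weight $\phi_j=\cosh(\delta j)$ — an eigenfunction of the discrete Laplacian with the small eigenvalue $e^{\delta}+e^{-\delta}-2$ — is precisely what forces $W^{\varepsilon}$ to possess a genuine minimizer in $j$ while only shifting the admissible $\gamma$ by an arbitrarily small amount, after which the first-contact argument proceeds as in the finite-dimensional case. Everything else is bookkeeping: the equi-Lipschitz bound on $\{f_j\}$ from (H1)--(H2), and the fact that $\ZZ$ is connected under nearest-neighbour coupling, which drives the inductive spreading of strict positivity.
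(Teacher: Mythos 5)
Your proof is correct. The paper itself does not prove this proposition: it simply defers to the arguments of Lemma 2.1 in the Chen--Guo--Wu reference \cite{CGW}, so your write-up supplies a complete, self-contained argument where the paper gives only a citation. The three ingredients all check out: (i) the reduction to a linear inequality $\dot w_j\ge w_{j+1}-2w_j+w_{j-1}+c_j(t)w_j$ with $\sup|c_j|\le K$ is legitimate, since (H1) gives $\sup_j\lVert f_j'\rVert_\infty\le L$ and (H2) gives $\sup_j f_j(0)<\infty$ (only finitely many $j$ have $f_j(0)\ne 1$), so $s\mapsto f_j(s)s$ is equi-Lipschitz on the bounded range of $u,v$; (ii) the barrier $\phi_j=\cosh(\delta j)$, which satisfies $\phi_{j+1}-2\phi_j+\phi_{j-1}=(e^{\delta}+e^{-\delta}-2)\phi_j$, correctly forces the infimum of $W^\varepsilon$ to be attained on a fixed finite index range, after which the first-contact argument is sound ($\dot W^\varepsilon_{j_0}(\tau)>0$ versus the left-derivative being $\le 0$ at a first zero); and (iii) the nearest-neighbour induction via $\frac{d}{dt}(w_je^{(2+K)t})\ge w_{j'}e^{(2+K)t}$ propagates strict positivity from any site where $w(0)>0$ to all of $\ZZ$, which is exactly the strong comparison statement, and part (2) follows since genuine solutions are simultaneously sub- and super-solutions. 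Your identification of the unbounded lattice (non-attained infimum) as the only real obstacle, and the exponential weight as the fix, matches the standard mechanism behind the cited lemma; the one point worth making explicit if this were to be inserted into the paper is the extension of $f_j$ off $[0,\infty)$, since the sub-solution $\underline u$ used later does take negative values and the paper handles this by the ad hoc convention $f_j(\bar u_j)=f_j(0)$ for $\bar u_j\le 0$, which your Lipschitz-extension remark covers.
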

\begin{proof}
The proof follows from arguments in Lemma 2.1 in \cite{CGW}.
\end{proof}
With the comparison principle, we have that if $z \in X^+$, $u(t;z) \in X^+$.
\medskip

In next two subsections, we introduce the discrete heat kernel and the discrete parabolic Harnack inequality, which play critical roles in studying the tail estimates and the bounds of wave speeds of transition fronts. 

\subsection{Discrete Heat Kernel}
Discrete heat kernel is highly related to I-Bessel functions. The I-Bessel function $I_{x}(t)$ is defined as a solution to the differential equation
$$t^{2} \frac{d^{2}y}{dt^{2}}+ t\frac{dy}{dt}- (t^2 + x^2) = 0.$$
In \cite{Pal99}, the author derived an upper bound and lower bound for $I_{x}(t)$, for all $t > 0$ and $x \geq 0$,
$$e^{-\frac{1}{2\sqrt{t^{2}+x^{2}}}}\leq I_{x}(t) \sqrt{2\pi}(t^2 + x^2)^{\frac{1}{4}}e^{-\varsigma_0(t,x)}
\leq e^{\frac{1}{2\sqrt{t^{2}+x^{2}}}},$$ with $\varsigma_0(x, t)=\sqrt{t^{2}+x^{2}}+xln(\frac{t}{x+\sqrt{t^{2}+x^{2}}})$.

By Proposition 3.1 in \cite{CJK2}, the heat kernel on a 2-regular graph is given by $$K(t,r)=e^{-2t}I_{r}(2t), \text{ for } (t ,r)  \in (0, \infty) \times \ZZ^{+}.$$ With the help of the above bounds of $I_{r}(t)$, we have the bounds of $K(t,r)$:
$$\frac{1}{\sqrt{2\pi}}(4t^2 + r^2)^{-\frac{1}{4}}e^{-2t-\frac{1}{2\sqrt{4t^{2}+r^{2}}}+\varsigma_0(2t,r)}\leq K(t,r)\leq \frac{1}{\sqrt{2\pi}}(4t^2 + r^2)^{-\frac{1}{4}}e^{-2t+\frac{1}{2\sqrt{4t^{2}+r^{2}}}+\varsigma_0(2t,r)}.$$ The authors in \cite{CJK2} showed that $\sqrt{t}e^{-t}I_{x}(t)\leq (1+\frac{x}{t})^{-\frac{x}{2}}$, thus $K(t,r)\leq \frac{1}{\sqrt{2t}}(1+\frac{r}{2t})^{-\frac{r}{2}}$.

By Theorem 2.3 in \cite{CMS}, $h^{\ZZ}_{t}(j) \asymp F(t,j)$, that is, there exist positive real constants $\epsilon > 0$ and $M_{\epsilon}>0$ such that
\vspace{-.1in}\begin{equation}
\label{heatkernel}
(1-\epsilon)F(t,j) \leq h^{\ZZ}_{t}(j)\leq  (1+\epsilon)F(t,j),
\vspace{-.1in}\end{equation}
for $j^2+t^2> M_{\epsilon}$, where $h^{\ZZ}_{t}(j)$ is the heat kernel associated with $\mathcal{L} f(j)=f(j) - \frac{f(j+1)+f(j-1)}{2} $ and $F(t,j)$ is given by if $j=0$,$$F(t,j)=\frac{1}{\sqrt{2\pi}}\frac{1}{(1+t^2)^{\frac{1}{4}}},$$ else if $j\neq 0$,

 $$F(t,j)=\frac{1}{\sqrt{2\pi}}\frac{exp[-t+|j|\varsigma(t/|j|)]}{(1+t^2+j^2)^{\frac{1}{4}}},$$
 where $\varsigma(t/|j|):=\varsigma_0(1,t/|j|)$. 

Recall the nonlinear equation \eqref{main-eq},
\[
\ds\dot{u}_{j}=u_{j+1}-2u_{j}+u_{j-1}+f_j(u_j) u_{j},\quad j \in \ZZ.
\]
Consider also the linearized equation
\vspace{-.1in}\begin{equation}
\label{linear-eq}
\ds\dot{u}_{j}=u_{j+1}-2u_{j}+u_{j-1}+a_j u_{j},\quad j \in \ZZ,
\vspace{-.1in}\end{equation}
where $a_j=f_j(0)$

Let $\Lambda_s : \mathcal{D}(\Lambda_s) \subset X \to X$ be defined by
\vspace{-.1in}\begin{equation}
\label{mainoperator_0}
 (\Lambda_s u)_{j}:= u_{j+1}-u_{j}+u_{j-1}, \forall u \in X.
\vspace{-.1in}\end{equation}

Let $S(t)$ be the semigroup generated by $\Lambda_s$. Note that $(S(t) z)_j=\ds e^t \sum_k h^{\ZZ}_{2t}(j-k) z_k$ for $z := \{z_{j}\}_{j \in \ZZ} \in X$. Then the solution of \eqref{main-eq} is given by $$u(t)=S(t-T)u(T)-\int_{T}^{t}S(t-s)g(s)ds,t>T,$$
where $g_j(t)=(1-f_j(u_j))u_j(t)$. More precisely, we have the following, for $t>T$,
\vspace{-.1in}\begin{equation}
\label{nonlinear-sol}
u_j(t)=e^{t-T}\sum_kh^{\ZZ}_{2(t-T)}(j-k)u_k(T)-\int_{T}^{t}e^{(t-s)}\sum_kh^{\ZZ}_{2(t-s)}(j-k))g_k(s)ds.
\vspace{-.1in}\end{equation}
We should point out that the solution form with \eqref{nonlinear-sol} is slightly different with that given by \eqref{IVP-solution}. With heat kernel $h^{\ZZ}_{2t}$ in \eqref{nonlinear-sol}, we can use the heat kernel estimate \eqref{heatkernel}. Then there would be some advantages over \eqref{IVP-solution} while exploring some estimates, such as the exponential tail estimates of transition fronts.

\subsection{Discrete Parabolic Harnack Inequality}
In this subsection, we shall introduce the discrete parabolic Harnack inequality for the solution to our main equation \eqref{main-eq}. Harnack inequalities have many significant applications in both elliptic and parabolic differential equations such as exploring boundary regularity, heat kernel estimate, and other solution estimates. Moser in \cite{Moser} proved a parabolic Harnack inequality for classical parabolic PDEs. For discrete parabolic Harnack inequalities, we will adopt Definition 1.6 and apply Theorem 1.7 in \cite{Delmotte} to prove that the discrete parabolic Harnack inequality holds on a 2-regular graph. Readers are referred to \cite{Delmotte} for further information about parabolic Harnack inequality on graphs. For convenience, we recall necessary graph theory, and state the Definition 1.6 of \cite{Delmotte} as the following Definition \ref{Harnack_Inequality_2_graph}.

Let $\Gamma$ be an infinite set and $\mu_{xy} =\mu_{yx}$ a symmetric nonnegative weight on $\Gamma \times \Gamma$. We call $x$ and $y$ neighbors, denoted by $x \sim y$, when $\mu_{xy}\neq 0$. Vertices are measured by $m(x) = \ds\sum_{x \sim y} \mu_{xy}$. The “volume” of subsets $E \subset \Gamma$ by $V(E)= \ds\sum_{x \in E} m(x)$. We can further define $d (x, y)$ as the distance of $x$ and $y$ in $\Gamma$, that is, the shortest number of edges between $x$ and $y$. Let $B_r(x)$ be the closed ball $\{y \in \Gamma | d(x,y) \leq r\}$. We say that $u(t,x)$ satisfies continuous-time parabolic equation on $(t,x)$ if 
\vspace{-.1in}\begin{equation}
\label{Parabolic-graph}
m(x) u_t(t,x)=\sum_{y} \mu_{xy}(u(t,y)-u(t,x)).
\vspace{-.1in}\end{equation}

We remark that for a 2-regular graph, $x$ has only two neighbors $y_-:=x-1$ and $y_+:=x+1$. If we consider the same weight for $\mu_{xy_-}=\mu_{xy_+}$, then $$\sum_{y} \mu_{xy}(u(t,y)-u(t,x))= \mu_{xy_-} (u(t,x-1)-2u(t,x)+u(t,x+1)),$$ that is the exactly same type equation as \eqref{main-eq} we consider in the paper. In \cite{Delmotte}, Delmotte defines Harnack inequality of \eqref{Parabolic-graph} on the graph as follows. 

\begin{definition}[Harnack Inequality \cite{Delmotte}]
\label{Harnack_Inequality_2_graph}
Set $\eta \in(0,1)$ and $0<\theta_1<\theta_2<\theta_3<\theta_4$. $(\Gamma,\mu)$ satisfies the continuous-time parabolic Harnack inequality $H(\eta,\theta_1,\theta_2,\theta_3,\theta_4,C)$ if for all $x_0,s, r$ and every nonnegative solution on
$Q=[s,s+\theta_4 r^2] \times B_r(x_0)$ we have
$$
\sup_{Q_-} u \leq C \inf_{Q_+} u,
$$
where $Q_-=[s+\theta_1 r^2,s+\theta_2 r^2] \times B_{\eta r}(x_0)$ and $Q_+=[s+\theta_3 r^2,s+\theta_4 r^2] \times B_{\eta r}(x_0)$.
 \end{definition}
By Theorem 1.7 in \cite{Delmotte}, the discrete parabolic Harnack inequality holds if and only if the following three conditions are satisfied:
 \begin{definition}[$\Delta^*( \alpha )$ Condition]
Let $\alpha>0$, the weighted graph satisfies $\Delta^*  ( \alpha )$ if $$x \sim y \implies \mu_{xy} \geq \alpha m(x);$$
 \end{definition}
  \begin{definition}[$''$Doubling Volume$''$ Property]
There exists a  $C>0$ such that $$V(B_{2r}(x)) \leq C V(B_{r}(x))$$ for any $x \in \Gamma$ and $r$;
 \end{definition}
 and
\begin{definition}[Poincar$\acute{e}$ Inequality]
There exists a $C_2>0$ such that for all $v \in \RR^{ \Gamma}$, all $x_0 $, and $r>0$, 
$$
 \sum_{x \in B_r(x_0)}m(x) (v(x)-\bar v)^2 \leq C_2 r^2 \sum_{x,y  \in B_{2r}(x_0)} \mu_{xy}(v(x)-v(y))^2,
$$
where $\bar v=\ds\frac{1}{V(B_r(x_0))}\sum_{x  \in B_r(x_0)}m(x) v(x)$.
 \end{definition}
 Now we claim that parabolic Harnack inequality holds on a 2-regular graph.
 \begin{theorem}[Harnack Inequality on a 2-regular Graph]
\label{Harnack_inequality_2graph }
The parabolic Harnack inequality $H(\eta,\theta_1,\theta_2,\theta_3,\theta_4,C)$ holds on a 2-regular graph.
\end{theorem}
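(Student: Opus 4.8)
The plan is to verify the three conditions from Delmotte's Theorem 1.7 for the 2-regular graph $\Gamma = \ZZ$ with symmetric weight $\mu_{xy} = 1$ whenever $|x-y| = 1$ and $\mu_{xy} = 0$ otherwise, and then invoke that theorem to conclude the parabolic Harnack inequality $H(\eta,\theta_1,\theta_2,\theta_3,\theta_4,C)$ holds. First I would record the elementary consequences of this choice of weights: for every $x \in \ZZ$ we have $m(x) = \sum_{x \sim y}\mu_{xy} = 2$, the graph distance $d(x,y) = |x-y|$ is the usual one, the ball $B_r(x_0) = \{x : |x - x_0| \le r\}$ contains exactly $2\lfloor r\rfloor + 1$ vertices, and hence $V(B_r(x_0)) = 2(2\lfloor r\rfloor + 1)$. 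These facts make each of the three conditions a short computation.

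Next I would check each condition in turn. For $\Delta^*(\alpha)$: whenever $x \sim y$ we have $\mu_{xy} = 1$ and $m(x) = 2$, so the inequality $\mu_{xy} \ge \alpha m(x)$ holds with $\alpha = 1/2$. For the doubling volume property: $V(B_{2r}(x_0)) = 2(2\lfloor 2r\rfloor + 1) \le 2(4r + 1)$ and $V(B_r(x_0)) = 2(2\lfloor r\rfloor + 1) \ge 2$, and one checks $2(4r+1) \le 5 \cdot 2(2\lfloor r \rfloor + 1)$ uniformly in $r > 0$ and $x_0$ (for $r \ge 1$ use $\lfloor r \rfloor \ge r/2$; for $0 < r < 1$ both balls are the singleton and the ratio is $1$), so the doubling constant $C = 5$ works. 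For the discrete Poincaré inequality: with $m \equiv 2$ and $\mu_{xy} = 1$ on edges it reduces to showing $\sum_{|x-x_0|\le r}(v(x) - \bar v)^2 \le C_2 r^2 \sum_{|x-x_0|\le 2r,\ |y - x_0| \le 2r,\ |x-y|=1}(v(x) - v(y))^2$, where $\bar v$ is the average of $v$ over $B_r(x_0)$. This is the standard one-dimensional discrete Poincaré estimate on an interval of $n := 2\lfloor r\rfloor + 1$ points: writing $v(x) - v(x') = \sum (v(\cdot+1) - v(\cdot))$ along the path from $x'$ to $x$, using $(v(x)-\bar v)^2 = \frac1n\sum_{x'}(v(x)-v(x'))^2 - $ (variance rearrangement) together with Cauchy--Schwarz gives a bound of the form $\sum_{x}(v(x) - \bar v)^2 \le n^2 \sum_{\text{edges in } B_r}(v(\cdot+1)-v(\cdot))^2$, and since $n^2 \le (2r+1)^2 \le 9r^2$ for $r \ge 1$ (and the statement is trivial for $0 < r < 1$) and the edges counted in $B_r$ are among those in $B_{2r}$, the inequality holds with an absolute constant $C_2$.

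Having established $\Delta^*(1/2)$, volume doubling, and the Poincaré inequality, I would then simply quote Theorem 1.7 of \cite{Delmotte}, which asserts that the conjunction of these three properties is equivalent to the continuous-time parabolic Harnack inequality $H(\eta,\theta_1,\theta_2,\theta_3,\theta_4,C)$ (for any fixed admissible parameters $\eta \in (0,1)$, $0 < \theta_1 < \theta_2 < \theta_3 < \theta_4$, with $C$ depending only on these and on the three structural constants). Since the weighted Laplacian in \eqref{Parabolic-graph} for this graph is exactly $u \mapsto u(\cdot-1) - 2u(\cdot) + u(\cdot+1)$ up to the factor $m(x) = 2$, solutions of \eqref{Parabolic-graph} are precisely time-rescaled solutions of the linear part of \eqref{main-eq}, so the Harnack inequality applies to the relevant solutions. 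I do not expect a genuine obstacle here; the only mildly delicate point is bookkeeping with floor functions when $r$ is small or non-integer, which is handled by treating $0 < r < 1$ separately (all balls degenerate to points and every inequality is trivial) and using $\lfloor r \rfloor \asymp r$ for $r \ge 1$. The Poincaré inequality is the step most prone to a sloppy constant, so I would be careful to present the telescoping-plus-Cauchy--Schwarz argument cleanly, but it is entirely routine.
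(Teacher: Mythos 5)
Your proposal is correct and follows essentially the same route as the paper: verify Delmotte's three conditions ($\Delta^*(\alpha)$ with $\alpha=1/2$, volume doubling, and the discrete Poincar\'e inequality on $\ZZ$ with unit edge weights) and then quote Theorem 1.7 of \cite{Delmotte}. The only difference is internal to the Poincar\'e step, where you use the standard variance-over-pairs identity plus telescoping and Cauchy--Schwarz, while the paper telescopes from a crossing point of $\bar v$; both are routine and yield the inequality with a comparable absolute constant.
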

\begin{proof}
It suffices to show a 2-regular graph satisfies the $\Delta^*(\alpha)$ condition, the $''$doubling volume$''$ property and the Poincar$\acute{e}$ inequality.
First, a 2-regular graph with $0< \alpha \leq \frac{1}{2}$ satisfies the $\Delta^*  ( \alpha )$ condition.
Second, for a 2-regular graph, $V(B_{r}(x))=2(2r+1)$ and $V(B_{2r}(x))=2(4r+1)$. Choose $C=2$ and then the $''$doubling volume$''$ property holds.

Finally, we prove the Poincar$\acute{e}$ inequality on a 2-regular graph. In fact, we have a strong Poincar$\acute{e}$ inequality, that is, $B_{2r}(x_0)$ can be reduced by $B_{r}(x_0)$. Without loss of generality, let $x_0=0$ and consider $v(x)$ for $-r \leq x \leq r $. Consider the same weights for all vertices, and let $\mu_{xy}=\mu_{xy}=1$ if $|y-x|=1$, otherwise 0. Then we have 
\vspace{-.1in}\begin{equation}
\label{Poincare-eq}
 \sum_{x,y  \in B_{r}(x_0)} \mu_{xy}(v(x)-v(y))^2= \sum_{x  \in B_{r}(x_0)} [(v(x)-v(x+1))^2+(v(x)-v(x-1))^2].
\vspace{-.1in}\end{equation}
 The sequence $v(x)$ oscillates around $\bar v$. In other words, if $v(x)$ moves from $-r$ to $r$, it must either hit the $\bar v$ at some point or cross $\bar v$ from one side to another. There exists at least one integer $\hat x$ such that either $v(\hat x)=\bar v$ or $(v(\hat x)-\bar v)(v(\hat x+1)-\bar v)<0$. In addition, there exists an $\hat x \in (-r, r)$ such that
\vspace{-.1in}\begin{equation}
\label{Oscillation-eq}
\max\{|v(\hat x+1)-\bar v|,|v(\hat x)-\bar v|\} \leq |v(\hat x)-v(\hat x+1)|.
\vspace{-.1in}\end{equation}
Thus, with \eqref{Poincare-eq}, \eqref{Oscillation-eq}, Cauchy-Schwarz and triangle inequalities, we have that, for $x \leq \hat x$,
\begin{align*}
| v(x)-\bar v|&=|\sum_{y = x}^{\hat x-1}(v(y)-v(y+1))+(v(\hat x)-\bar v)|\\
 &\leq\sum_{y = x}^{\hat x-1}|(v(y)-v(y+1))|+|(v(\hat x)-\bar v)|\\
  &\leq\sum_{y = x}^{\hat x-1}|(v(y)-v(y+1))|+|(v(\hat x)-v(\hat x+1))|\\
  &= \sum_{y = x}^{\hat x}|(v(y)-v(y+1))|\\
  &\leq \sum_{y = -r}^{r-1}|(v(y)-v(y+1))|\\
  &\leq [\sum_{y = -r}^{r-1}((v(y)-v(y+1)))^2]^{\frac{1}{2}}[\sum_{y = -r}^{r-1}(1)^2]^{\frac{1}{2}}\\
  &= [2r \sum_{y = -r}^{r-1}((v(y)-v(y+1)))^2]^{\frac{1}{2}}\\
  &\leq [2r  \sum_{x,y  \in B_{r}(x_0)} \mu_{xy}(v(x)-v(y))^2]^{\frac{1}{2}}.
\end{align*}
If $x=\hat x+1$, with \eqref{Oscillation-eq}, $|v(\hat x+1)-\bar v| \leq |v(\hat x)-v(\hat x+1)|$ and so we also have the above inequality.
If $x>\hat x+1$, then we can do backward arguments above and have
\begin{align*}
| v(x)-\bar v|&=|\sum_{y =\hat x+2}^{x}(v(y)-v(y-1))+(v(\hat x+1)-\bar v)|\\
 &\leq \sum_{y =\hat x+2}^{x}|(v(y)-v(y-1))|+|(v(\hat x+1)-\bar v)|\\
  &\leq\sum_{y =\hat x+2}^{x}|(v(y)-v(y-1))|+|(v(\hat x)-v(\hat x+1))|\\
  &= \sum_{y =\hat x+1}^{x}|(v(y)-v(y-1))|\\
  &\leq \sum_{y = -r+1}^{r}|(v(y)-v(y-1))|\\
  &\leq [\sum_{y = -r+1}^{r}((v(y)-v(y+1)))^2]^{\frac{1}{2}}[\sum_{y = -r+1}^{r}(1)^2]^{\frac{1}{2}}\\
  &=[2r\sum_{y = -r+1}^{r}((v(y)-v(y+1)))^2]^{\frac{1}{2}}\\
  &\leq [2r  \sum_{x,y  \in B_{r}(x_0)} \mu_{xy}(v(x)-v(y))^2]^{\frac{1}{2}}.
\end{align*}
In summary, for all $x \in B_{r}(x_0)$, we have that 
$$| v(x)-\bar v| \leq  [2r  \sum_{x,y  \in B_{r}(x_0)} \mu_{xy}(v(x)-v(y))^2]^{\frac{1}{2}}.$$
Take the square for both sides and thus
$$ (v(x)-\bar v)^2 \leq  2r  \sum_{x,y  \in B_{r}(x_0)} \mu_{xy}(v(x)-v(y))^2.$$
Note that $m(x)= \ds\sum_{x \sim y} \mu_{xy}=2$. Then take the sum over $B_r(x_0)$ and we have
\begin{align*}
\sum_{x \in B_r(x_0)}m(x) (v(x)-\bar v)^2 &\leq   (m(x) (2r+1)2r) \sum_{x,y  \in B_{r}(x_0)} \mu_{xy}(v(x)-v(y))^2\\
&=   (8r^2+4) \sum_{x,y  \in B_{r}(x_0)} \mu_{xy}(v(x)-v(y))^2\\
&\leq   (12 r^2) \sum_{x,y  \in B_{r}(x_0)} \mu_{xy}(v(x)-v(y))^2.
\end{align*}
Hence, Poincar$\acute{e}$ inequality holds for $C_2=12$.
 \end{proof}
 \subsection{Some Auxiliary Functions}
 We recall some auxiliary functions. One is for the function $\varsigma(z)$ in the heat kernel estimate \eqref{heatkernel}. Recall that $\varsigma(z)=\sqrt{1+z^2}+ln\frac{z}{1+\sqrt{1+z^2}}$ for $z \in \RR^+$.  Another is for the wave speed, $c(\mu)=\frac{\lambda(\mu)}{\mu}$ with $\lambda(\mu)=e^{\mu}-1+e^{-\mu}$ for $\mu > 0$. The properties of these auxiliary functions play important roles throughout later sections. We group them in the following lemma and their proofs are straightforward.
 \begin{lemma}
\label{Au-lm4}
\begin{itemize} Let $g(z)=-1+2\frac{\varsigma(z)+\mu}{z}$ for $\mu>0$ and $z>0$.
\item[(1)] $\varsigma(z)$ is strictly increasing in $z$ on $(0,\infty)$ and then there exists a $l_0>0$ such that $\varsigma(l_0)=0$.
\item[(2)] $g(z)$ is concave down and obtains an absolute maximum at $z_0=csch(\mu)=\frac{2}{e^{\mu}-e^{-\mu}}$ for $z \in (0,\infty)$ and $g(z_0)=\lambda(\mu)$. 
\item[(3)] For fixed $\mu>0$, $c(\mu)$ is concave up and has a unique critical point at $\mu^*$, that is, $c(\mu)$ strictly decreasing in $(0,\mu^*]$ and strictly increasing in $(\mu^*,\infty)$.
\item[(4)] For $\mu \in (0,\mu^*)$, $c(\mu)>\frac{2}{z_0}$, for $\mu = \mu^*$, $c(\mu)=\frac{2}{z_0}$ and for $\mu >\mu^*$, $c(\mu)<\frac{2}{z_0}$, where $z_0=csch(\mu)$.
\item[(5)] $\frac{\varsigma(z)}{z}$ is strictly increasing in $z$ on $(0,\infty)$ and $\ds\lim_{z \to \infty}\frac{\varsigma(z)}{z}=1$.
\end{itemize}
\end{lemma}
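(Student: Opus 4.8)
The plan is to reduce the whole lemma to elementary single–variable calculus by first computing the first two derivatives of $\varsigma$. Differentiating $\varsigma(z)=\sqrt{1+z^2}+\ln z-\ln(1+\sqrt{1+z^2})$ and simplifying (the one mildly delicate bit of algebra, using $(1+z^2)-z^2=1$ repeatedly) gives
\[
\varsigma'(z)=\frac{z}{1+\sqrt{1+z^2}}+\frac1z>0,\qquad
\varsigma''(z)=\frac{1}{\sqrt{1+z^2}\,(1+\sqrt{1+z^2})}-\frac1{z^2}<0,
\]
the second inequality because $z^2<1+z^2<\sqrt{1+z^2}\,(1+\sqrt{1+z^2})$. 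Part (1) is then immediate: $\varsigma$ is strictly increasing, and since $\varsigma(0^+)=-\infty$ and $\varsigma(+\infty)=+\infty$, the intermediate value theorem gives a (unique) zero $l_0$. These formulas also set up the key auxiliary quantity $\psi(z):=z\varsigma'(z)-\varsigma(z)=\frac{z^2}{1+\sqrt{1+z^2}}+1-\varsigma(z)$, for which $\psi'(z)=z\varsigma''(z)<0$; combined with $\psi(0^+)=+\infty$ and, from the expansions $\varsigma(z)=z-\tfrac1{2z}+o(1/z)$ and $z\varsigma'(z)=z+\tfrac1{2z}+o(1/z)$, with $\psi(z)\to0^+$ as $z\to\infty$, this shows $\psi$ is a strictly decreasing bijection $(0,\infty)\to(0,\infty)$; in particular $\psi>0$ everywhere.

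Parts (5) and (2) then follow quickly. For (5), $(\varsigma(z)/z)'=\psi(z)/z^2>0$, and $\varsigma(z)/z=\sqrt{1+z^2}/z+(1/z)\ln\big(z/(1+\sqrt{1+z^2})\big)\to1$. For (2), $g'(z)=2\big(\psi(z)-\mu\big)/z^2$, so there is a unique $z_0>0$ with $\psi(z_0)=\mu$, and $g'>0$ on $(0,z_0)$, $g'<0$ on $(z_0,\infty)$; hence $g$ increases then decreases, with its unique global maximum at $z_0$. To identify $z_0=\operatorname{csch}\mu$, substitute $z_0=1/\sinh\mu$: then $\sqrt{1+z_0^2}=\coth\mu$, a one–line computation gives $\varsigma(z_0)=\coth\mu-\mu$ and $\varsigma'(z_0)=\cosh\mu$, so $\psi(z_0)=\coth\mu-(\coth\mu-\mu)=\mu$ as required; and $g(z_0)=-1+2(\varsigma(z_0)+\mu)/z_0=-1+2\coth\mu\sinh\mu=-1+2\cosh\mu=\lambda(\mu)$. (Note the literal statement ``$g$ is concave down'' fails — $g''(z)>0$ for large $z$ — so I would instead assert the genuinely used fact that $g$ is unimodal with a single interior maximum, recorded above, while observing $g''(z_0)=2\varsigma''(z_0)/z_0<0$.)

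Parts (3) and (4) are pure exponential bookkeeping. With $\lambda(\mu)=2\cosh\mu-1$ and $c(\mu)=(2\cosh\mu-1)/\mu$, one finds $c'(\mu)=N(\mu)/\mu^2$ where $N(\mu)=2\mu\sinh\mu-(2\cosh\mu-1)$, and $c''(\mu)=2P(\mu)/\mu^3$ where $P(\mu)=\mu^2\cosh\mu-2\mu\sinh\mu+2\cosh\mu-1$. Since $P(0)=1$ and $P'(\mu)=\mu^2\sinh\mu>0$ we get $P>0$, so $c$ is strictly convex (``concave up''); and $N(0^+)=-1<0$, $N'(\mu)=2\mu\cosh\mu>0$, $N(+\infty)=+\infty$, so $N$ has a unique zero $\mu^*$ with $c'<0$ on $(0,\mu^*)$ and $c'>0$ on $(\mu^*,\infty)$, which is (3). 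For (4), at $z_0=\operatorname{csch}\mu$ we have $2/z_0=2\sinh\mu$, hence $c(\mu)-2/z_0=\big(2\cosh\mu-1-2\mu\sinh\mu\big)/\mu=-N(\mu)/\mu$; the sign of $N$ just determined yields $c(\mu)>2/z_0$ for $\mu<\mu^*$, equality at $\mu^*$, and $c(\mu)<2/z_0$ for $\mu>\mu^*$.

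I do not expect a real obstacle. The only step demanding care is the algebraic simplification of $\varsigma'$ and $\varsigma''$ together with the limits of $\psi$, since everything in (2) and (5) rests on $\psi$ being a strictly decreasing bijection of $(0,\infty)$ onto itself; and the identification $z_0=\operatorname{csch}\mu$ in (2) is the single substitution that must be guessed, after which the verification is mechanical. The one flaw in the printed statement is the word ``concave'' in (2), which should be read as ``unimodal with a unique interior maximum.''
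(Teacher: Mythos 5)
Your proposal is correct and follows essentially the same route as the paper: compute $\varsigma'(z)=\sqrt{1+z^2}/z$, analyze the quantity $z\varsigma'(z)-\varsigma(z)$ (which the paper identifies in closed form as $\ln\frac{1+\sqrt{1+z^2}}{z}$, strictly decreasing from $+\infty$ to $0$, whereas you reach the same conclusion via $\varsigma''<0$ and asymptotics), use the first-derivative test and the substitution $z_0=\operatorname{csch}\mu$ for (2), and reduce (4) to the sign of $c'$ exactly as the paper does via $c(\mu)-2/z_0=-\mu c'(\mu)$; you also supply the details of (3) that the paper merely asserts. Your observation that the literal phrase ``concave down'' in (2) fails for large $z$ and should be read as unimodality is accurate and consistent with what the paper's own proof actually establishes (it only proves $g'>0$ on $(0,z_0)$ and $g'<0$ on $(z_0,\infty)$), and unimodality is all that is used later.
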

\begin{proof}
\begin{itemize}
\item[(1)] By direct computation,
$$\varsigma'(z)=\frac{z}{1+\sqrt{1+z^2}}+\frac{1}{z}=\frac{\sqrt{1+z^2}}{z}>0.$$
Therefore $\varsigma(z)$ is strictly increasing on $(0,\infty)$. Since $\varsigma(z) \to -\infty$ as $z \to 0$  and $\varsigma(z) \to \infty$ as $z \to \infty$, there exists a $l_0>0$ such that $\varsigma(l_0)=0$.
\item[(2)] By direct computation, $g'(z)=2\frac{z\varsigma'(z)-\varsigma(z)-\mu}{z^2}=2\frac{ln\frac{1+\sqrt{1+z^2}}{z}-\mu}{z^2}$ for $\mu>0$. Then there exists a unique critical point $z_0=\frac{2}{e^{\mu}-e^{-\mu}}$ such that $g'(z_0)=0$. We can verify that $g(z)$ obtains an absolute maximum at $z_0$ by first derivative test. Since $ln\frac{1+\sqrt{1+z^2}}{z}$ is a strictly decreasing function with the range from positive infinity to 0, $g'(z)>0$ for $z<z_0$ and $g'(z)<0$ for $z>z_0$.
Plugging $z_0$ into $\frac{\varsigma(z)+\mu}{z}$,
\begin{align*}
\frac{\varsigma(z_0)+\mu}{z_0}&=\frac{\varsigma(z_0)}{z_0}+\frac{\mu}{z_0}\\
&=\frac{\varsigma(csch(\mu))}{csch(\mu)}+\frac{\mu}{csch(\mu)}\\
&=\frac{\sqrt{1+csch^{2}(\mu)}+ln\frac{csch(\mu)}{1+\sqrt{1+csch^{2}(\mu)}}}{csch(\mu)}+\frac{\mu}{csch(\mu)}\\
&=\frac{\sqrt{coth^{2}(\mu)}+ln\frac{csch(\mu)}{1+\sqrt{coth^{2}(\mu)}}}{csch(\mu)}+\frac{\mu}{csch(\mu)}\\
&=\frac{coth(\mu)+ln\frac{csch(\mu)}{1+coth(\mu)}}{csch(\mu)}+\frac{\mu}{csch(\mu)}\\
&=\frac{coth(\mu)+ln\frac{1}{sinh(\mu)+cosh(\mu)}}{csch(\mu)}+\frac{\mu}{csch(\mu)}\\
&=\frac{coth(\mu)-\mu}{csch(\mu)}+\frac{\mu}{csch(\mu)}\\
&=cosh(\mu)
\end{align*}

Thus, $g(z_0)=-1+2cosh(\mu)=e^{\mu}+e^{-\mu}-1=\lambda(\mu)$.
\item[(3)] We can prove it by direct computation of solving $c'(\mu)=0$ and verifying $c''(\mu)>0$.
\item[(4)] Let $h(\mu)=c(\mu)-\frac{2}{z_0}$. Then $h(\mu)=\frac{\lambda(\mu)}{\mu}-(e^{\mu}-e^{-\mu}))=\frac{\lambda(\mu)-\mu(e^{\mu}-e^{-\mu})}{\mu}=-\mu c'(\mu)$ and so $h(\mu)$ has an opposite sign as $c'(\mu)$. By (3), $c'(\mu)<0$ for $\mu \in (0,\mu^*)$, $c'(\mu^*)=0$ and $c'(\mu)>0$ for $\mu>\mu^*$, as required.
\item[(5)] Since $(\frac{\varsigma(z)}{z})'=\frac{\varsigma'(z)z-\varsigma(z)}{z^2}=-\frac{1}{z^2}ln\frac{z}{1+\sqrt{1+z^2}}>0$, $\frac{\varsigma(z)}{z}$ is a strictly increasing function on $(0,\infty)$. The limit $\ds\lim_{z \to \infty}\frac{\varsigma(z)}{z}=1$ follows easily.
\end{itemize}
\end{proof}

\section{Existence of Transition Fronts and Tail Estimates}
This section is devoted to investigating the existence of transition fronts of \eqref{main-eq} for wave speed $c \in [c^*, \hat c]$ when $\lambda \in [1, \lambda^*)$ and $c^* < \hat c$. By Lemma \ref{Au-lm4}(3), the wave speed interval $c \in [c^*, \hat c]$ corresponds to the interval of $\mu \in [\hat \mu, \mu^*]$. To prove the existence of transition fronts, we apply fundamental tools such as comparison principles and constructions of super- and sub-solutions. First we introduce principal eigenvalue theory for Jacobi operators, that will play a central and important role in these processes. 
\subsection{Principal Eigenvalue Theory for Jacobi Operators}
Let $X_{\mu}=\{u \in X:  \sup |e^{j\cdot \mu}u_j| < \infty \}$. Consider the following linear difference equation for $\psi \in X_{\mu}$,
\vspace{-.1in}\begin{equation}
\label{eigen-eq1}
\Lambda \psi_{j}=\gamma \psi_{j},
\vspace{-.1in}\end{equation}
where $j \in\ZZ$, $\Lambda$ is as in \eqref{mainoperator}.

Note that letting $\psi_{j}=e^{-\mu j}\phi_{j}$, we have the following equivalent problem for $\phi \in X$,
\vspace{-.1in}\begin{equation}
\label{eigen-eq}
e^{-\mu}\phi_{j+1}-2\phi_{j}+e^{\mu}\phi_{j-1}+a_{j}\phi_{j}=\gamma_\mu \phi_{j},
\vspace{-.1in}\end{equation}
where $j \in\ZZ$ with $a_j=f_j(0)$.

Let $\Lambda_{\mu}: \mathcal{D}(\Lambda_{\mu}) \subset X \to X$ be defined by
$(\Lambda_{\mu} \phi)_{j}:= e^{-\mu}\phi_{j+1}-2\phi_{j}+e^{\mu}\phi_{j-1}+a_{j}\phi_{j}.$ $\Lambda_{\mu}$ are of so called Jacobi operators in \cite{Teschl}. The positive principal eigenvectors to \eqref{eigen-eq} play important roles in constructions of transition fronts to \eqref{main-eq}. We refer readers to \cite{Teschl} for spectral theory of Jacobi operators in detail. For the particular case of periodic media, we refer readers to \cite{GuHa}. 

We can obtain that $\lambda$ is a principal eigenvalue for \eqref{eigen-eq1} by classical Krein-Rutman Theorem \cite{Krein-Rutman}, that is, a generalized version of Perron-Frobenius theorem. Sometimes we want to consider the truncated eigenvalue problem of \eqref{eigen-eq1}:
\vspace{-.1in}\begin{equation}
\label{truncated-eigen-eq}
\phi_{j+1}-2\phi_{j}+\phi_{j-1}+a_{j}\phi_{j}=\lambda_M \phi_{j},
\vspace{-.1in}\end{equation}
 where $j \in [-M,M]$, and $\phi_{M+1}=\phi_{-M-1}=0$ for $M>N$. If we write it in a matrix form, and let $A_M$ be
\[ \left( \begin{array}{ccccc}
a_{-M}-2 & 1 & 0&...& 0 \\
1 & a_{-M+1}-2 & 1 &...& 0\\
...&... & ... & ... &...\\
0&...&1 & a_{M-1}-2 & 1\\
0&...&0&1 & a_{M}-2  \end{array} \right),\]
then 
\vspace{-.1in}\begin{equation}
\label{eigen-eq-trucated}
A_M\phi^M=\lambda_M \phi^{(M)},
\vspace{-.1in}\end{equation}
 where $\phi^{(M)}=(\phi_{-M},...,\phi_{M})^T$. By Perron-Frobenius theorem, there exists a principal eigenvalue and an associated positive eigenvector. We let $(\lambda_M, \phi^{(M)}_j)$ be the pair of corresponding $l^\infty$ normalized principal eigenvalue and eigenvector, that is, $(\lambda_M, \phi^{(M)}_j)$ satisfies \eqref{eigen-eq} with $\|\phi^{(M)}\|_\infty=1$ and $\phi^{(M)}_j>0$ for $j \in [-M, M]$. Let $M$ go to infinity and we claim that the limit of $\lambda_{M}$ exists and is $\lambda$.  
\begin{lemma}
\label{lambda_limit-lemma}
 $\lambda \geq 1$ and $\ds\lambda=\lim_{M \to \infty}\lambda_{M}$ for $M>N$.
\end{lemma}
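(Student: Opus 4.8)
The plan is to bracket $\lambda$ between the truncated eigenvalues $\lambda_{M}$ and pass to the limit $M\to\infty$; throughout write $\langle u,v\rangle=\sum_{j}u_{j}v_{j}$. After adding $2I$, each $A_{M}$ is an entrywise nonnegative irreducible tridiagonal matrix (off-diagonal entries $1$, diagonal entries $a_{j}=f_{j}(0)>0$), so $\lambda_{M}+2$ is the Perron eigenvalue of $A_{M}+2I$, which, that matrix being symmetric and nonnegative, is its largest eigenvalue; thus $\lambda_{M}=\max\sigma(A_{M})$. Since $A_{M}+2I$ is a principal submatrix of $A_{M+1}+2I$, the sequence $\lambda_{M}$ is non-decreasing, and it is bounded above (by Gershgorin, $\lambda_{M}\le\sup_{j}a_{j}$), so $\lambda_{\infty}:=\lim_{M\to\infty}\lambda_{M}$ exists. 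To get $\lambda_{M}\le\lambda$, extend the strictly positive eigenvector $\phi^{(M)}$ by $0$ outside $[-M,M]$ to $\tilde\phi\in X^{+}\setminus\{0\}$; a direct check gives $(\Lambda\tilde\phi)_{j}=\lambda_{M}\tilde\phi_{j}$ for $|j|\le M$ and $(\Lambda\tilde\phi)_{\pm(M+1)}=\phi^{(M)}_{\pm M}>0$, hence $\Lambda\tilde\phi\ge\lambda_{M}\tilde\phi$ coordinatewise. Pairing with the positive principal eigenfunction $\psi>0$ of $\Lambda$ furnished by the Krein--Rutman theorem recalled above — valid since $\tilde\phi$ has finite support, so that $\langle\Lambda\tilde\phi,\psi\rangle=\langle\tilde\phi,\Lambda\psi\rangle=\lambda\langle\tilde\phi,\psi\rangle$ — we obtain $\lambda\langle\tilde\phi,\psi\rangle\ge\lambda_{M}\langle\tilde\phi,\psi\rangle$, and $\langle\tilde\phi,\psi\rangle>0$ forces $\lambda_{M}\le\lambda$; hence $\lambda_{\infty}\le\lambda$.

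For $\lambda\ge1$: by (H2) the diagonal entry of $A_{M}$ equals $1-2=-1$ for every index $j$ with $N<|j|\le M$, so the principal submatrix of $A_{M}$ on the block $\{N+1,\dots,M\}$ is precisely the $(M-N)\times(M-N)$ tridiagonal Toeplitz matrix with diagonal $-1$ and sub/super-diagonal $1$, whose largest eigenvalue is $-1+2\cos\frac{\pi}{M-N+1}$. Since a principal submatrix of the nonnegative irreducible matrix $A_{M}+2I$ has spectral radius no larger than $A_{M}+2I$ itself, $\lambda_{M}\ge-1+2\cos\frac{\pi}{M-N+1}$. Combining with the previous paragraph, $\lambda\ge\lambda_{M}\ge-1+2\cos\frac{\pi}{M-N+1}$ for all $M>N$, and letting $M\to\infty$ gives $\lambda\ge1$.

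It remains to show $\lambda\le\lambda_{\infty}$. If $\lambda=1$ this is immediate, as the two preceding paragraphs give $1\le\lambda_{\infty}\le\lambda=1$. So assume $\lambda>1$. By Krein--Rutman, $\lambda$ has a positive bounded eigenfunction $\psi>0$ with $\Lambda\psi=\lambda\psi$; for $|j|>N$ it satisfies $\psi_{j+1}+\psi_{j-1}=(1+\lambda)\psi_{j}$, whose characteristic roots $r_{\pm}$ obey $r_{+}r_{-}=1$ and $r_{+}>1>r_{-}>0$, so boundedness of $\psi$ rules out the growing mode and $\psi_{j}$ decays like $r_{-}^{|j|}$ as $|j|\to\infty$. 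The truncation $\psi^{(M)}=(\psi_{-M},\dots,\psi_{M})^{T}$ then satisfies $A_{M}\psi^{(M)}=\lambda\psi^{(M)}$ in every coordinate except the two endpoints, where the entries are off by $-\psi_{M+1}$ and $-\psi_{-M-1}$; consequently
\[
\frac{\normtwo{(A_{M}-\lambda I)\psi^{(M)}}}{\normtwo{\psi^{(M)}}}\le\frac{\sqrt{\psi_{M+1}^{2}+\psi_{-M-1}^{2}}}{\normtwo{\psi^{(M)}}}=:\epsilon_{M}\longrightarrow0 ,
\]
because the numerator is of order $r_{-}^{M}$ while the denominator is bounded below by $\bigl(\sum_{|j|\le N}\psi_{j}^{2}\bigr)^{1/2}>0$. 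Since $A_{M}$ is real symmetric, $\sigma(A_{M})$ must meet $[\lambda-\epsilon_{M},\lambda+\epsilon_{M}]$; as $\lambda_{M}=\max\sigma(A_{M})\le\lambda$, this forces $\lambda-\epsilon_{M}\le\lambda_{M}\le\lambda$, so $\lambda_{M}\to\lambda$. Together with $\lambda_{\infty}\le\lambda$ we conclude $\lambda=\lim_{M\to\infty}\lambda_{M}$.

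I expect the last step to be the main obstacle: a priori the near-optimal test vectors for $\lambda$ (equivalently, the normalized eigenvectors $\phi^{(M)}$) could concentrate near spatial infinity, so that their truncations to $[-M,M]$ lose all contact with $\lambda$ and only $\lambda_{\infty}<\lambda$ survives. What defeats this is the exponential spatial decay of the principal eigenfunction away from the perturbation window $[-N,N]$ — available precisely because in the nontrivial case $\lambda$ lies strictly above the top value $1$ of the homogeneous (essential) spectrum — which makes its truncation a genuine approximate eigenvector of $A_{M}$; the borderline case $\lambda=1$ is handled separately by the squeeze, which also avoids having to assert that $\lambda$ is an eigenvalue there. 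The two technical points to check carefully are that the perturbation of $\Lambda$ relative to the homogeneous Jacobi operator is finitely supported (so that for $\lambda>1$ the eigenfunction $\psi$ exists and decays) and that $\psi$ cannot vanish throughout $[-N,N]$ (otherwise extending the recursion outward would force $\psi\equiv0$), which is what makes the lower bound on $\normtwo{\psi^{(M)}}$ uniform in $M$.
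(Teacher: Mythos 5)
Your proof is correct in substance but follows a genuinely different route from the paper's at the decisive step. The preliminary parts agree in spirit: you establish monotonicity and boundedness of $\lambda_M$ exactly as the paper does, except that you properly shift by $2I$ before invoking Perron--Frobenius and submatrix monotonicity (the paper applies the spectral-radius comparison directly to $A_M$, which has negative diagonal entries, so your version is actually the cleaner one), and your Toeplitz-submatrix computation supplies a concrete proof of $\lambda\geq 1$ where the paper only writes ``Clearly.'' The divergence is in identifying $\lambda_\infty$ with $\lambda$. The paper works from the inside out: it normalizes the truncated eigenvectors $\phi^{(M)}$ in $l^\infty$, solves the constant-coefficient recursion explicitly outside $[-N,N]$ to show the normalization index stays in the fixed window $[-N-2,N+2]$, extracts a pointwise limit $\phi^{(\infty)}>0$ satisfying $\Lambda\phi^{(\infty)}=\lambda_\infty\phi^{(\infty)}$, and concludes $\lambda_\infty=\lambda$; this is why the paper can advertise the lemma as an \emph{alternative} proof that $\lambda$ admits a positive eigenfunction. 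You work from the outside in: you take the positive eigenfunction $\psi$ of $\Lambda$ as given by the Krein--Rutman assertion, use it as a dual test function to get $\lambda_M\leq\lambda$, and truncate it to produce approximate eigenvectors of the symmetric matrices $A_M$, with the exponential decay of $\psi$ (available exactly when $\lambda>1$, i.e.\ when $\lambda$ sits above the essential spectrum $[-3,1]$) controlling the boundary error; the case $\lambda=1$ is squeezed. Your approach buys a shorter and more transparent argument and correctly isolates the concentration-at-infinity issue, but it is not self-contained in the way the paper's is: it presupposes the existence of the positive eigenfunction $\psi$ at $\lambda$, which is precisely what the paper's version of the lemma is partly designed to deliver (Krein--Rutman does not literally apply to the non-compact operator $\Lambda$ on $X$, and at $\lambda=1$, the top of the essential spectrum, a bounded positive eigenfunction is not guaranteed --- though for the inequality $\lambda_M\leq\lambda$ a positive, possibly unbounded, solution of the difference equation would suffice, and one could also bypass $\psi$ entirely there by comparing $e^{\Lambda t}\tilde\phi\geq e^{\lambda_M t}\tilde\phi$ with the growth bound of the semigroup). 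If you want your argument to stand on its own, replace the duality step by that semigroup comparison and note explicitly that for $\lambda>1$ the decaying positive eigenfunction exists by the Jacobi-operator theory the paper cites, rather than by Krein--Rutman.
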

\begin{proof}
Clearly, $\lambda \geq 1$. Without loss of generality, let both $M$ and $N$ are even. We let $\tilde{A}_M$ be an $2M+3$ by $2M+3$ matrix:
\[ \left( \begin{array}{ccccc}
0 & 0 & 0&...& 0 \\
0 & a_{-M}-2 & 1 &...& 0\\
...&... & ... & ... &...\\
0&...&1 & a_{M}-2 & 0\\
0&...&0&0& 0  \end{array} \right).\]
Then we have that $A_{M+1} \ge  \tilde A_M$ and so $\rho (A_{M+1})\ge \rho (\tilde A_{M}) =\rho (A_{M})$, where $\rho (\#)$ is the spectral radius of the matrix $\#$. Thus, $\lambda_{M}=\rho (A_{M})$ is non-decreasing in $M$. On the other hand, $\|A_M\|_{max}=\ds\max_{i,j}|A_M(i,j)|$, where $A_M(i,j)$ is the element of $A_M$ at the i-th row and j-th column. Then $0< \lambda_{M}\le \|A_M\|_{max} \le \ds\max_{j}\{|a_j|+2\}$, that is, $\lambda_{M}$ is uniformly bounded. Therefore, the limit $\ds\lim_{M \to \infty}\lambda_{M}$ exists and it is denoted by $\lambda_{\infty}=\ds\lim_{M \to \infty}\lambda_{M}$. Let $\phi^{(M)}$ be the positive eigenvector of $A_M$ with $\|\phi^{(M)}\|_{\infty}=1$. For each $j$, there exists a subsequence $M_j$ of M such that  $\ds\lim_{M_j \to \infty}\phi_j^{(M_j)}$ exists and let $\phi_j^{(\infty)}=\ds\lim_{M_j \to \infty}\phi_j^{(M_j)}$. For each $M>>N$, let $j_M$ be such that $\phi_{j_M}^{(M)}=1$. For $j<-N$, we write \eqref{truncated-eigen-eq} as the following,
\vspace{-.1in}\begin{equation}
\label{eigen-eq_3}
\phi_{j+1}=(1+\lambda_{M})\phi_{j}-\phi_{j-1}.
\vspace{-.1in}\end{equation}
Let $c_1=1+\lambda_{M}$ and $c_2=-1$. We can solve a recursive sequence $\phi_{j+1}=c_1\phi_{j}+c_2\phi_{j-1}$. To this end, we use an auxiliary equation $x^2-c_1x-c_2=0$. Then solve it to have two roots $d_1=\frac{1+\lambda_{M} + \sqrt{(1+\lambda_{M})^2+4}}{2}>1$ and $d_2=\frac{1+\lambda_{M} - \sqrt{(1+\lambda_{M})^2+4}}{2}<0$. Therefore, we have
\begin{align*}
&\phi_{j+1}-d_1\phi_{j}=d_2(\phi_{j}-d_1\phi_{j-1}),\\
&\phi_{j+1}-d_2\phi_{j}=d_1(\phi_{j}-d_2\phi_{j-1}).
\end{align*}
Note that $\phi_{-M-1}=0$ and $\phi_{-M}>1$. Thus, for $-M \leq j<-N$, we have 
\begin{align*}
&\phi_{j+1}-d_1\phi_{j}=(d_2)^{j+M+1}(\phi_{-M}-d_1\phi_{-M-1}),\\
&\phi_{j+1}-d_2\phi_{j}=(d_1)^{j+M+1}(\phi_{-M}-d_2\phi_{-M-1}).
\end{align*}
Subtract the above equations, divide by $d_2-d_1$ and then with $d_1d_2=-1$, for $-M \leq j<-N$, we have
\vspace{-.1in}\begin{align}
\label{aux-eq_4}
\begin{split}
\phi_{j}&=\frac{(d_2)^{j+M+1}(\phi_{-M}-d_1\phi_{-M-1})-(d_1)^{j+M+1}(\phi_{-M}-d_2\phi_{-M-1})}{d_2-d_1}\\
&=\frac{(d_2)^{j+M+1}-(d_1)^{j+M+1}}{d_2-d_1}\phi_{-M}\\
&=\frac{(-1)^{j+M+1}(d_1)^{-(j+M+1)}-(d_1)^{j+M+1}}{d_2-d_1}\phi_{-M}.
\end{split}
\vspace{-.1in}\end{align}

Note that $d_1>1$, and both $(d_1)^{-x} +(d_1)^x$ and $-(d_1)^{-x} +(d_1)^x$ are increasing for $x \in \RR$. Thus the subsequences of $\phi_j$ with even $j=2k$ and odd $j=2k+1$ are increasing for $k=-M/2...-N/2-1$. Therefore it implies that $\ds\max_{-M \leq j<-N}\phi_{j}=\max\{\phi_{-N-2},\phi_{-N-1}\}$. Similarly, we have $\ds\max_{N < j\leq M}\phi_{j}=\max\{\phi_{N+1},\phi_{N+2}\}$. Then we must have $j_M \in  [-N-2,N+2]$.
There exists a subsequence $M_k$ of $M$ such that $\bar j=\ds\lim_{M_k \to \infty} j_{M_k}$ for some $\bar j \in [-N-2,N+2]$. Thus, $\phi_{\bar j}^{(\infty)}=1$. Moreover, by taking the limit, we have that $\Lambda \phi^{(\infty)}= \lambda_{\infty} \phi^{(\infty)}$. By the strong positivity of the semigroup generated by $\Lambda$, $\phi_{\bar j}^{(\infty)}=1>0$ implies that  $\phi_j^{(\infty)}>0$  for all $j$ and so $\lambda_{\infty}$ must be its principal eigenvalue. Therefore, $\ds\lambda=\lim_{M \to \infty}\lambda_{M}$.
\end{proof}

Indeed, Lemma \ref{lambda_limit-lemma} provides an alternative proof that $\lambda$ is a principal eigenvalue of \eqref{eigen-eq1}. Next we want to investigate the principal eigenvalue of \eqref{eigen-eq}. We have the following lemma.

\begin{lemma}
\label{p_solution-lemma}
   There exists a positive solution $\psi \in l^2$ satisfying $\|\psi\|_{\infty}=1$ to \eqref{eigen-eq1} for any $\gamma \geq \lambda$.
\end{lemma}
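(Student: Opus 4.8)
The plan is to pass to the conjugated difference equation and exploit that the coefficients are constant off the perturbation window $[-N,N]$. Since $\gamma\ge\lambda\ge1$ and the map $\mu\mapsto\lambda(\mu)=e^{\mu}-1+e^{-\mu}$ is continuous and strictly increasing on $[0,\infty)$ with $\lambda(0)=1$ and $\lambda(\mu)\to\infty$ as $\mu\to\infty$, there is a unique $\mu=\mu(\gamma)\ge\hat\mu\ge0$ with $\lambda(\mu)=\gamma$, and $\mu>0$ as soon as $\gamma>1$. Setting $\psi_{j}=e^{-\mu j}\phi_{j}$ turns \eqref{eigen-eq1} into $\Lambda_{\mu}\phi=\gamma\phi$. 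For $|j|>N$ one has $a_{j}=f_{j}(0)=1$ by (H2), so there the relation is the constant-coefficient recursion $e^{-\mu}\phi_{j+1}+e^{\mu}\phi_{j-1}=(1+\gamma)\phi_{j}$, whose characteristic equation $e^{-\mu}x^{2}-(1+\gamma)x+e^{\mu}=0$ has, because $1+\gamma=e^{\mu}+e^{-\mu}$, the two positive roots $x=1$ and $x=e^{2\mu}$. Hence on $(-\infty,-N]$ the recessive (decaying) solution is $\phi_{j}\propto e^{2\mu j}$, i.e. $\psi_{j}\propto e^{\mu j}$, while on $[N,\infty)$ the recessive solution is $\phi_{j}\propto1$, i.e. $\psi_{j}\propto e^{-\mu j}$. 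Consequently \emph{any} bounded positive solution of \eqref{eigen-eq1} decays geometrically as $j\to\pm\infty$ and therefore lies in $l^{2}$, so the task reduces to producing one bounded positive solution with spectral value exactly $\gamma$, after which normalising to $\|\psi\|_{\infty}=1$ is free.

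For $\gamma=\lambda$ this is essentially Lemma~\ref{lambda_limit-lemma}: the limit $\phi^{(\infty)}$ constructed there is a positive solution of $\Lambda\phi=\lambda\phi$ with $\|\phi^{(\infty)}\|_{\infty}=1$, and by the root analysis above it automatically has geometric tails, hence $\phi^{(\infty)}\in l^{2}$ and one takes $\psi=\phi^{(\infty)}$. For $\gamma>\lambda$ I would argue by shooting from the two ends. Let $\Psi^{-}$ be the solution of \eqref{eigen-eq1} that equals $e^{\mu j}$ for $j\le-N$ and is propagated by the recursion for $j\ge-N$, and let $\Psi^{+}$ be the one that equals $e^{-\mu j}$ for $j\ge N$ and is propagated leftward; these span the two-dimensional solution space. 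One then seeks scalars $\alpha,\beta$ with $\psi=\alpha\Psi^{-}+\beta\Psi^{+}$ positive on all of $\ZZ$ and recessive at both ends. Positivity of $\Psi^{\pm}$ near their preferred ends, together with the fact that $\gamma$ exceeds the principal eigenvalue $\lambda_{M}$ of every truncation $A_{M}$ for $M$ large (so that $\gamma I-A_{M}$ is an invertible matrix with nonnegative inverse, giving sign control of solutions across finite windows), is what I would use to push positivity through $[-N,N]$ and to pin down the admissible $(\alpha,\beta)$.

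The step I expect to be the main obstacle is exactly this gluing for $\gamma>\lambda$: the two recessive solutions $\Psi^{-}$ and $\Psi^{+}$ need not be proportional, so a priori $\alpha\Psi^{-}+\beta\Psi^{+}$ is recessive at only one end, and one must show that the positivity constraints furnished by $\gamma>\lambda_{M}$ are rigid enough to select a genuinely two-sided-recessive positive representative, i.e. that $\gamma$ plays the role of a principal value for the relevant cone problem on $\ZZ$. This is where the monotonicity of $\lambda(\mu)$ and $c(\mu)$ from Lemma~\ref{Au-lm4}, the positivity of the semigroup generated by $\Lambda$, and the finite-window estimates have to be combined; by contrast the reduction $\psi_{j}=e^{-\mu j}\phi_{j}$, the characteristic-root computation, and the upgrade from boundedness to membership in $l^{2}$ are all routine.
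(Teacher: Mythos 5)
Your reduction (the conjugation $\psi_j=e^{-\mu j}\phi_j$, the characteristic roots $1$ and $e^{2\mu}$ off $[-N,N]$, and the observation that a bounded positive solution of \eqref{eigen-eq1} must decay geometrically at both ends once $\mu>0$) is correct, and your treatment of $\gamma=\lambda$ via the limit $\phi^{(\infty)}$ from Lemma \ref{lambda_limit-lemma} is sound provided $\lambda>1$. But the step you flag as the main obstacle --- gluing $\Psi^{-}$ and $\Psi^{+}$ into a single positive solution recessive at both ends when $\gamma>\lambda$ --- is not merely delicate, it is impossible. The operator $\Lambda$ is a bounded self-adjoint Jacobi operator on $l^2(\ZZ)$ and $\lambda$ is the top of its spectrum, so for $\gamma>\lambda$ the map $\gamma I-\Lambda$ is boundedly invertible on $l^2$ and the only $l^2$ solution of \eqref{eigen-eq1} is $\psi\equiv 0$; your own tail analysis then shows there is no bounded positive solution either. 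Equivalently, $\Psi^{-}$ and $\Psi^{+}$ are proportional exactly when $\gamma$ is an eigenvalue, which occurs for at most finitely many $\gamma$ above the essential spectrum and never for $\gamma>\lambda$. The nonnegativity of $(\gamma I-A_M)^{-1}$ gives sign control on finite windows, but no amount of finite-window positivity can manufacture two-sided decay.

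For comparison, the paper offers no argument at all here: its proof is the single sentence that the claim ``follows by arguments in section 2.3 in \cite{Teschl}.'' What that section actually supplies is weaker than the statement of Lemma \ref{p_solution-lemma}: for $\gamma\ge\lambda$ there exist two positive solutions $u_{\pm}$ of \eqref{eigen-eq1} that are square-summable near $+\infty$ and near $-\infty$ \emph{separately} (the minimal or principal solutions of oscillation theory), not one solution lying in $l^2(\ZZ)$. That weaker statement is also all the paper uses downstream --- the remark following the lemma concedes that a positive eigenvector is obtained directly only when $\gamma_\mu=\lambda$, and Lemma \ref{p_solution-lem} works with bounded positive solutions in $X$ rather than $l^2$ ones. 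So the productive conclusion of your analysis is not to close the gluing step but to recognize that the lemma as literally stated fails for $\gamma>\lambda$ (and also at $\gamma=\lambda$ when $\lambda=1$, where the bounded positive solution is asymptotically constant and hence not square-summable), and that the correct, provable version is the half-line statement from \cite{Teschl}; your characteristic-root computation outside $[-N,N]$ is essentially the proof of that version.
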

\begin{proof}
The proof follows by arguments in section 2.3 in \cite{Teschl}.
\end{proof}

Thus, by Lemma \ref{p_solution-lemma}, we can only expect a positive eigenvector to \eqref{eigen-eq} for $\gamma_\mu \geq \lambda$, where $\gamma_\mu $ is the associated eigenvalue of \eqref{eigen-eq}. It should be pointed out that we can not obtain the positive eigenvector directly by Lemma \ref{p_solution-lemma} except $\gamma_\mu=\lambda$, because the spectral theory of Jacobi operators in \cite{Teschl} is mainly on a Hilbert space $l^2$, while we require one for  \eqref{eigen-eq1} on $X_{\mu}$ or  for \eqref{eigen-eq} on $X$. Then we have the following Lemma.
\begin{lemma}
\label{p_solution-lem}
   There exists a positive solution $\phi \in X$ to \eqref{eigen-eq} for $\gamma_\mu = \lambda(\mu)$ for $\mu \in [\hat \mu, \mu^*)$, and $\phi_{k} =1$ for $k=n_0,n_0+1$ and $n_0>N$. Moreover, $\phi_j=1$ for $j>N$ and there is a positive number $l$ such that $\ds\lim_{j \to -\infty}\phi_j=\begin{cases} 0, \mu=\hat \mu,\\
l, \mu>\hat \mu,
\end{cases}$ if $\lambda>1$ and $\ds\lim_{j \to -\infty}\phi_j=l$ for any $\mu>0$ if $\lambda=1$.
\end{lemma}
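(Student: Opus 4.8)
The plan is to obtain $\phi$ from a positive solution $\psi$ of the Jacobi eigenvalue problem \eqref{eigen-eq1} at the energy $\gamma_\mu=\lambda(\mu)$ via the substitution $\psi_j=e^{-\mu j}\phi_j$, and then read off all the asserted asymptotics from the constant–coefficient structure of \eqref{eigen-eq} on the tails $|j|>N$ together with the position of $\gamma_\mu$ relative to $\sigma(\Lambda)$. First I would note that $\gamma_\mu=\lambda(\mu)\ge\lambda$: since $\mu\ge\hat\mu$ and $\lambda(\cdot)$ is strictly increasing on $(0,\infty)$ (as $\lambda'(\mu)=e^{\mu}-e^{-\mu}>0$), we have $\lambda(\mu)\ge\lambda(\hat\mu)=\lambda$ by the definition of $\hat\mu$.

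Next I would analyze the tails. For $j\ge N+1$ one has $a_j=1$, so on $\{j\ge N\}$ equation \eqref{eigen-eq} at the value $\gamma_\mu=\lambda(\mu)$ reduces to $e^{-\mu}\phi_{j+1}+e^{\mu}\phi_{j-1}=(e^{\mu}+e^{-\mu})\phi_j$, a linear recurrence with characteristic roots $e^{2\mu}$ and $1$; hence every solution equals $Ae^{2\mu j}+B$ there, and boundedness of $\phi\in X$ forces $A=0$. Thus any bounded solution of \eqref{eigen-eq} is constant on $\{j\ge N\}$; conversely, prescribing $\phi_N=\phi_{N+1}=1$ and running the (invertible, second–order) recurrence \eqref{eigen-eq} backward through $[-N,N]$ produces a nonzero solution which on $\{j\le -N\}$ has the form $\phi_j=Ce^{2\mu j}+D$ and is therefore bounded there as well. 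So the space of bounded solutions of \eqref{eigen-eq} is exactly one–dimensional, we may normalize it by $\phi_j\equiv 1$ on $\{j\ge N\}$ (in particular $\phi_{n_0}=\phi_{n_0+1}=1$ for any $n_0>N$), and already $\phi\in X$ and $\lim_{j\to-\infty}\phi_j=D$.

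The heart of the matter — and the step I expect to be delicate — is positivity. Put $\psi_j=e^{-\mu j}\phi_j$; then $\psi$ solves \eqref{eigen-eq1}, and on $\{j\ge N\}$ it equals $e^{-\mu j}$, i.e. it decays at $+\infty$ at exactly the subdominant rate, so $\psi$ is (a positive multiple of) the principal/Weyl solution of $\Lambda$ at $+\infty$ at the energy $\gamma_\mu$. Because $\gamma_\mu\ge\lambda=\sup\{\mathbf{Re}\,\mu:\mu\in\sigma(\Lambda)\}$, the oscillation theory of Jacobi operators — the circle of ideas behind Lemma \ref{p_solution-lemma} and section 2.3 of \cite{Teschl} — shows that at such an energy this solution is nonoscillatory and of one sign; being positive on $\{j\ge N\}$, it is positive for all $j$, whence $\phi_j=e^{\mu j}\psi_j>0$ for all $j$ (and consequently $C,D\ge 0$ above). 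The care needed here is that for $\mu>\hat\mu$ one has $\gamma_\mu>\lambda$, so the relevant positive solution is \emph{not} $\ell^2$ (it grows at $-\infty$), and one must apply the Jacobi–operator results through the half–line Weyl solution at $+\infty$ rather than through a literal $\ell^2$ eigenvector.

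Finally I would identify the limit. On $\{j\le -N\}$ we have $\psi_j=Ce^{\mu j}+De^{-\mu j}$, so as $j\to-\infty$ the solution $\psi$ is bounded — equivalently $\psi\in\ell^2$ together with its decay at $+\infty$ — precisely when $D=0$; and a nonzero $\ell^2$ solution of $\Lambda\psi=\gamma_\mu\psi$ exists iff $\gamma_\mu$ is an eigenvalue of $\Lambda$. Since $\gamma_\mu=\lambda(\mu)\ge\lambda=\sup\sigma(\Lambda)$, for $\mu>\hat\mu$ one has $\gamma_\mu>\sup\sigma(\Lambda)$, so $D\ne 0$, and positivity gives $D>0$; hence $\phi_j\to l:=D>0$. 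For $\mu=\hat\mu$ the only possibility is $\gamma_{\hat\mu}=\lambda$: if $\lambda>1$ this is an eigenvalue (isolated above the essential spectrum $[-3,1]$), so $\psi\in\ell^2$, $D=0$, and $\phi_j=Ce^{2\hat\mu j}\to 0$ with $C>0$; if $\lambda=1$ (so $\hat\mu=0$) then no value $\ge 1$ is an $\ell^2$–eigenvalue of $\Lambda$ — at $\gamma=1$ the tail recurrence of \eqref{eigen-eq1} has the double root $1$, forcing any $\ell^2$ solution to vanish on both tails and hence identically — so again $D\ne 0$ and $\phi_j\to l:=D>0$. This matches the claimed dichotomy and completes the proof.
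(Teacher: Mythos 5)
Your proposal is correct, and its skeleton coincides with the paper's: reduce to the constant-coefficient recurrence on the tails $|j|>N$ (getting $\phi_j\equiv 1$ for $j> N$ and $\phi_j=Ce^{2\mu j}+D$ for $j<-N$, hence $\phi\in X$ with limit $D$ at $-\infty$), then settle positivity and the sign of $D$ by the Jacobi-operator theory of \cite{Teschl}, Section 2.3. The two delicate sub-steps are, however, executed differently. For positivity, the paper argues by contradiction at the first sign change $k_0$: it subtracts $\epsilon$ times the growing solution $\hat\phi$ ($\hat\phi_j=e^{2\mu j}$ for $j>N$) to manufacture a second sign change and contradicts the "at most one sign change" statement (Corollary 2.8 of \cite{Teschl}); you instead identify $\psi_j=e^{-\mu j}\phi_j$ as the principal (Weyl) solution at $+\infty$ at an energy $\ge\sup\sigma(\Lambda)$ and invoke its global positivity. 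These are equivalent in substance and both rest on the same oscillation theory, though yours quietly requires positivity of the Weyl solution on all of $\ZZ$ (not just eventually), which is standard via positivity of the resolvent but is worth a sentence. For the sign of $D$ when $\mu>\hat\mu$, your argument is genuinely simpler and cleaner than the paper's: if $D=0$ then $\psi$ decays at both ends and is an $l^2$ eigenfunction at the energy $\lambda(\mu)>\lambda=\sup\sigma(\Lambda)$, which is impossible; the paper instead produces a second independent positive solution (via Lemma 2.6 of \cite{Teschl}) and derives another two-sign-change contradiction. Your identification of the case $\mu=\hat\mu$, $\lambda>1$ (isolated eigenvalue above the essential spectrum $[-3,1]$, so the principal solution is $l^2$ and $D=0$) matches the paper's appeal to Lemma \ref{p_solution-lemma}, and your handling of $\lambda=1$ correctly reduces every $\mu>0$ to the strictly-above-the-spectrum case.
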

\begin{proof}
We show how to obtain the principal eigenvectors to \eqref{eigen-eq} for $\lambda>1$ and the case $\lambda=1$ follows similarly as $\lambda>1$ with $\mu>\hat \mu$. 
For $|j|>N$, $a_j=1$. Thus, recalling \eqref{eigen-eq}, for $|j|>N$, we have 
\vspace{-.1in}\begin{equation*}
e^{-\mu}\phi_{j+1}-\phi_{j}+e^{\mu}\phi_{j-1}=\lambda(\mu) \phi_{j}.
\vspace{-.1in}\end{equation*}
Thus, recalling $\lambda(\mu)=e^{\mu}-1+e^{-\mu}$, for $|j|>N$, we have 
\vspace{-.1in}\begin{equation}
\label{eigen-eq_2}
e^{-\mu}\phi_{j+1}+e^{\mu}\phi_{j-1}=(e^{\mu}+e^{-\mu})\phi_{j}.
\vspace{-.1in}\end{equation}
Since $\phi_{k} =1$ for $k=n_0,n_0+1$, with \eqref{eigen-eq_2}, for $j>N$, $\phi_{j} =1$. On the other hand, for $j<-N$, we write \eqref{eigen-eq_2} as the following,
\vspace{-.1in}\begin{equation}
\label{eigen-eq_3}
\phi_{j-1}=(1+e^{-2\mu})\phi_{j}-e^{-2\mu}\phi_{j+1}.
\vspace{-.1in}\end{equation}
Let $c_1=1+e^{-2\mu}$ and $c_2=-e^{-2\mu}$. We can solve a recursive sequence $\phi_{j-1}=c_1\phi_{j}+c_2\phi_{j+1}$. To this end, we use an auxiliary equation $x^2-c_1x-c_2=0$. Then solve it to have two roots $d_1=1$ and $d_2=e^{-2\mu}$. Therefore, we have
\begin{align*}
&\phi_{j-1}-d_1\phi_{j}=d_2(\phi_{j}-d_1\phi_{j+1}),\\
&\phi_{j-1}-d_2\phi_{j}=d_1(\phi_{j}-d_2\phi_{j+1}).
\end{align*}
Thus, for $j<-N$, we have
\begin{align*}
&\phi_{j-1}-d_1\phi_{j}=(d_2)^{-N+1-j}(\phi_{-N}-d_1\phi_{-N+1}),\\
&\phi_{j-1}-d_2\phi_{j}=(d_1)^{-N+1-j}(\phi_{-N}-d_2\phi_{-N+1}).
\end{align*}
Subtract the above equations, divide by $d_2-d_1$ and then for $j<-N$, we have
\vspace{-.1in}\begin{align}
\label{aux-eq_4}
\begin{split}
\phi_{j}&=\frac{(d_2)^{-N+1-j}(\phi_{-N}-d_1\phi_{-N+1})-(d_1)^{-N+1-j}(\phi_{-N}-d_2\phi_{-N+1})}{d_2-d_1}\\
&=:C_1+C_2e^{2\mu j}.
\end{split}
\vspace{-.1in}\end{align}
Since $\ds \lim_{j \to -\infty}(d_2)^{-N+1-j}= \lim_{j \to -\infty}e^{-2\mu(-N+1-j)}=0$ and $(d_1)^{-N+1-j}=1$, 
$$
\lim_{j \to -\infty}\phi_{j}=\frac{-(\phi_{-N}-d_2\phi_{-N+1})}{d_2-d_1}.
$$
Thus, $\phi \in X$. Next, we prove that $\phi>0$. Suppose that $\phi_{k_0} \leq 0$ for some $k_0 < N$ while $\phi_{j} > 0$ for $j>k_0$ (i.e. $k_0$ is the first oscillation point around 0 from the right).
Let $\hat \phi$ be a solution with $\hat \phi_{k} =e^{2 \mu k}$ for $k=n_0,n_0+1$ and $n_0>N$. Then for $j>N$, $\hat \phi_{j} =e^{2 \mu j}$ for all $j>N$. There is an $\epsilon>0$ small enough such that $k_0$ is also an oscillation point for $\phi-\epsilon\hat \phi$. Then $\phi-\epsilon\hat \phi >0$ for $N < j < -\frac{ln(\epsilon)}{2 \mu}$ and $\phi-\epsilon\hat \phi<0$ for $j > -\frac{ln(\epsilon)}{2 \mu}$. Thus there exists another oscillation point for $\phi-\epsilon\hat \phi$. This causes a contradiction with \textbf{oscillation theory}, every solution can change sign at most once (See Corollary 2.8 in Section 2.3 of \cite{Teschl}), and so $\phi_j >0$. 

If $\mu=\hat\mu$, then $\lambda(\mu)=\lambda$. In this case, for $j<-N$, $e^{-\mu j}\phi_{j}=C_1 e^{-\mu j} + C_2 e^{\mu j}\in l^2 \subset X$, and it is a principal eigenvector of \eqref{eigen-eq1}. That implies that $C_1=0$ and so $\ds\lim_{j \to -\infty}\phi_j=0$.
Finally, we prove that $\ds\lim_{j \to -\infty}\phi_j>0$ for $\mu>\hat \mu$. $\phi>0$ implies that $\ds\lim_{j \to -\infty}\phi_j \geq 0$. Assuming that to be false with $\ds\lim_{j \to -\infty}\phi_j=0$, by \eqref{aux-eq_4}, $\phi_{j}=Ce^{2uj}$ for $j<N$. Thus
\vspace{-.1in}\begin{equation*}
\phi_{j}=\begin{cases} Ce^{2\mu j}, j<-N\\
\phi_{j}, j\in [-N,N]\\
1,j>N.
\end{cases}
\vspace{-.1in}\end{equation*}

Similarly, for $j>N$ with $\hat d_1=1$ and $\hat d_2=e^{2\mu}$, we have
\vspace{-.1in}\begin{align}
\label{aux-eq_5}
\begin{split}
\phi_{j}&=\frac{(\hat d_2)^{j-N+1}(\phi_{N}-\hat d_1\phi_{N-1})-(\hat d_1)^{-N+1-j}(\phi_{N}-\hat d_2\phi_{N-1})}{\hat d_2-\hat d_1}\\
&=:C_3+C_4e^{2\mu j}.
\end{split}
\vspace{-.1in}\end{align}

For $\mu>\hat \mu$, $\lambda(\mu)>\lambda(\hat \mu)=\lambda$. We can assume that there exists another independent positive solution $\tilde \phi$ (It is possible because there are at least two independent solutions $u_{\pm}(z,n)>0$ by Lemma 2.6 in Section 2.3 in \cite{Teschl}),
\vspace{-.1in}\begin{equation*}
\tilde\phi_{j}=\begin{cases} C_1+C_2e^{2\mu j}, j<-N\\
\tilde\phi_{j}, j\in [-N,N]\\
C_3+C_4e^{2\mu j},j>N.
\end{cases}
\vspace{-.1in}\end{equation*}
Then, by the independence and positivity of $\tilde \phi$ and $\phi$, we must have $C_1>0$ and $C_4>0$. Thus there exists at least one sign change at each end of the solution $\phi-\epsilon\tilde \phi$ for $j<-N$ or $j>N$ with small enough choice of $\epsilon$ . This causes a contradiction again with \textbf{oscillation theory}. Therefore, we must have $\ds\lim_{j \to -\infty}\phi_j=l>0$ for $\mu>\hat\mu$.
 \end{proof}

\subsection{Sub/Super-Solutions}
In this subsection, we construct a super-solution and a sub-solution with Lemma \ref{p_solution-lem}. By Lemma \ref{p_solution-lem}, the principal eigenvalue pair, denoted by $(\lambda_\mu, \phi^{\mu}_{j})$, exists for equation \eqref{eigen-eq}, where $\lambda_\mu=\lambda(\mu)$ for $\mu \in [\hat \mu, \mu^*)$.

Let
\vspace{-.05in}\begin{equation}
\label{v-super-eq1}
\bar u_{j}= e^{-\mu (j-ct)}\phi^{\mu}_{j}.
\vspace{-.05in}\end{equation}

\begin{lemma}
\label{super-solution-prop1}
   $\{\bar u_{j}\}_{j \in \ZZ}$ is a super-solution of
  \eqref{main-eq}.
\end{lemma}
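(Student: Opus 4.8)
The plan is to verify directly that $\bar u_j = e^{-\mu(j-ct)}\phi^\mu_j$ satisfies the super-solution inequality in Definition \ref{superSub-def}, namely
\[
\dot{\bar u}_j \;\ge\; \bar u_{j+1}-2\bar u_j+\bar u_{j-1}+f_j(\bar u_j)\,\bar u_j
\qquad\text{for all } j\in\ZZ.
\]
First I would compute the left side: since only the factor $e^{\mu ct}$ carries time dependence, $\dot{\bar u}_j = \mu c\,\bar u_j$. For the right side, factor $e^{-\mu(j-ct)}$ out of the discrete Laplacian acting on $e^{-\mu j}\phi^\mu_j$; using $e^{-\mu(j\pm1)}=e^{\mp\mu}e^{-\mu j}$ this gives
\[
\bar u_{j+1}-2\bar u_j+\bar u_{j-1}
= e^{-\mu(j-ct)}\bigl(e^{-\mu}\phi^\mu_{j+1}-2\phi^\mu_j+e^{\mu}\phi^\mu_{j-1}\bigr).
\]
By Lemma \ref{p_solution-lem}, $\phi^\mu$ solves \eqref{eigen-eq} with $\gamma_\mu=\lambda(\mu)$ and $a_j=f_j(0)$, so the bracket equals $\lambda(\mu)\phi^\mu_j - a_j\phi^\mu_j = \lambda(\mu)\phi^\mu_j - f_j(0)\,\phi^\mu_j$. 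Hence the linear part of the right side is $\bigl(\lambda(\mu)-f_j(0)\bigr)\bar u_j$, and adding the nonlinear term the full right side becomes $\bigl(\lambda(\mu)-f_j(0)+f_j(\bar u_j)\bigr)\bar u_j$.

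So the required inequality reduces, after dividing by $\bar u_j>0$, to
\[
\mu c \;\ge\; \lambda(\mu) - f_j(0) + f_j(\bar u_j).
\]
Now I would invoke the relation between $c$ and $\mu$: for $c\in[c^*,\hat c]$ the corresponding $\mu\in[\hat\mu,\mu^*]$ satisfies $c=c(\mu)=\lambda(\mu)/\mu$, i.e. $\mu c=\lambda(\mu)$ exactly (this is the choice of $\mu$ in the super-solution construction via Lemma \ref{Au-lm4}(3)). Thus the inequality is equivalent to $f_j(\bar u_j)\le f_j(0)$, which holds because $\bar u_j\ge 0$ and, by (H1), $f_j'<0$ on $\RR^+$ so $f_j$ is decreasing on $[0,\infty)$. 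This closes the estimate for every $j$, proving $\{\bar u_j\}$ is a super-solution of \eqref{main-eq}.

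The only subtlety — and the one place to be careful rather than the "hard part" — is bookkeeping the sign conventions: confirming that $\phi^\mu$ indeed solves \eqref{eigen-eq} with eigenvalue $\lambda(\mu)$ (not $\Lambda_\mu$ with a shifted spectrum), that the nonlinearity in \eqref{main-eq} is $f_j(u_j)u_j$ and not $f_j(u_j)$, and that the monotonicity of $f_j$ is applied at the argument $\bar u_j$ which lies in $[0,\infty)$ since $\phi^\mu>0$ and the exponential is positive. If one instead wants the sub-solution/super-solution pair to be ordered with the stationary solution $u^*_j$, one would additionally need $\bar u_j$ to be a genuine upper barrier only where $\bar u_j\le u^*_j$; but for the bare statement that $\bar u$ is a super-solution in the sense of Definition \ref{superSub-def}, the computation above suffices and no truncation is needed.
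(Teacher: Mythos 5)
Your proposal is correct and follows essentially the same route as the paper: both arguments reduce the super-solution inequality to the identity $(\bar u_j)_t=[\bar u_{j+1}-2\bar u_j+\bar u_{j-1}+a_j\bar u_j]$ (via the eigenvalue relation $\mu c=\lambda(\mu)$ and Lemma \ref{p_solution-lem}) together with $f_j(\bar u_j)\le f_j(0)=a_j$ from (H1). The paper compresses the computation into one displayed chain, while you spell out the exponential factorization and the division by $\bar u_j>0$, but there is no substantive difference.
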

\begin{proof}
 By (H1), we have $f_{j}(\bar u_{j})-f_j(0) \leq 0$. Recall that $a_j=f_j(0) $. By direct calculation, we have
\begin{align*}
&(\bar u_{j})_{t}- [\bar u_{j+1}-2\bar u_{j}+\bar u_{j-1}+f_{j}(\bar u_{j})\bar u_{j}]\\
\geq &(\bar u_{j})_{t}- [\bar u_{j+1}-2\bar u_{j}+\bar u_{j-1}+a_{j} \bar u_{j}]\\
=&0.
\end{align*}
\end{proof}

Let \vspace{-.05in}\begin{equation} 
\label{v-underbar-eq1}
\underline u_{j}= e^{-\mu (j-ct)}\phi^{\mu}_{j}-d_1 e^{-{\mu}_{1}( j-c t)}\phi^{\mu_{1}}_{j}.
\vspace{-.05in}\end{equation} for $\hat \mu \leq \mu< \mu_{1}<\min\{2 \mu,\mu^*\}$.

\begin{lemma}
\label{sub-solution-prop1}
   $\{\underline u_{j}\}_{j \in \ZZ}$ is a sub-solution of
  \eqref{main-eq} for any $d_1>\max\bigg\{\frac{\ds\sup_{j}\phi^{\mu}_{j}}{\ds\inf_{j}\phi^{\mu_{1}}_{j}},\frac{\ds L (\sup_{j}\phi^{\mu}_{j})^2}{(\mu_1 c-\lambda(\mu_{1}))\ds\inf_{j}\phi^{\mu_{1}}_{j}}\bigg\}.$
\end{lemma}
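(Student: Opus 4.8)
The plan is to feed $\underline u=\{\underline u_j\}$ into the parabolic operator and peel off the linear (Jacobi) part. Write $\bar u_j=e^{-\mu(j-ct)}\phi^{\mu}_j$ (the super-solution of Lemma~\ref{super-solution-prop1}) and $w_j=e^{-\mu_1(j-ct)}\phi^{\mu_1}_j$, so that $\underline u_j=\bar u_j-d_1 w_j$; throughout we take the wave speed to be $c=c(\mu)=\lambda(\mu)/\mu$, which is the choice that pairs $\mu\in[\hat\mu,\mu^*)$ with $c\in(c^*,\hat c]$. By Lemma~\ref{p_solution-lem}, $\phi^{\mu}$ and $\phi^{\mu_1}$ are strictly positive, equal to $1$ for $j>N$, and have finite limits as $j\to-\infty$; since $\mu_1>\mu\ge\hat\mu$ forces $\mu_1>\hat\mu$, the limit of $\phi^{\mu_1}$ is a positive number, so $\sup_j\phi^{\mu}_j<\infty$, $\inf_j\phi^{\mu_1}_j>0$, and the two lower bounds on $d_1$ in the statement are meaningful. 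Note also that $\underline u_j$ is negative for $j$ sufficiently negative (the $d_1 w_j$ term, decaying faster in $j$, dominates as $j\to-\infty$); there $f_j(\underline u_j)$ is only meaningful after extending $f_j$ past $0$, equivalently one works with $\max\{\underline u_j,0\}$, so it suffices to verify the sub-solution inequality on the set $\{\,\underline u_j\ge 0\,\}$.

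Next I would compute $\mathcal N[\underline u]_j:=\dot{\underline u}_j-\big(\underline u_{j+1}-2\underline u_j+\underline u_{j-1}+f_j(\underline u_j)\underline u_j\big)$. Since $\phi^{\mu}$ and $\phi^{\mu_1}$ solve \eqref{eigen-eq} with eigenvalues $\lambda(\mu)$ and $\lambda(\mu_1)$, a direct calculation gives $\dot{\bar u}_j=\mu c\,\bar u_j$, $\bar u_{j+1}-2\bar u_j+\bar u_{j-1}+a_j\bar u_j=\lambda(\mu)\bar u_j$, and the analogous identities for $w$ with $\mu_1,\lambda(\mu_1)$. Using $\mu c=\lambda(\mu)$ the $\bar u$-contributions cancel and one is left with
\[
\mathcal N[\underline u]_j=-d_1\big(\mu_1 c-\lambda(\mu_1)\big)w_j-\big(f_j(\underline u_j)-f_j(0)\big)\underline u_j .
\]
Because $\mu<\mu_1<\mu^*$ and $c(\cdot)$ is strictly decreasing on $(0,\mu^*]$ by Lemma~\ref{Au-lm4}(3), we have $c=c(\mu)>c(\mu_1)$, hence $\mu_1 c-\lambda(\mu_1)>0$ and the first term is strictly negative.

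It remains to dominate the nonlinear remainder by this negative term on $\{\,\underline u_j\ge 0\,\}$. There $0\le\underline u_j\le\bar u_j$, and the mean value theorem together with (H1) gives $|f_j(\underline u_j)-f_j(0)|\le L\,\underline u_j$, so
$-\big(f_j(\underline u_j)-f_j(0)\big)\underline u_j\le L\,\underline u_j^2\le L(\sup_j\phi^{\mu}_j)^2\,e^{-2\mu(j-ct)}$,
while the negative term is at most $-d_1(\mu_1 c-\lambda(\mu_1))(\inf_j\phi^{\mu_1}_j)\,e^{-\mu_1(j-ct)}$. The first condition $d_1>\sup_j\phi^{\mu}_j/\inf_j\phi^{\mu_1}_j$ forces $\underline u_j\ge 0$ to hold only where $j-ct>0$ (from $\underline u_j\ge0$ one gets $e^{(\mu_1-\mu)(j-ct)}\ge d_1\phi^{\mu_1}_j/\phi^{\mu}_j>1$), and on that region $\mu_1<2\mu$ gives $e^{-2\mu(j-ct)}\le e^{-\mu_1(j-ct)}$; then the second condition $d_1\ge L(\sup_j\phi^{\mu}_j)^2/\big((\mu_1 c-\lambda(\mu_1))\inf_j\phi^{\mu_1}_j\big)$ yields $\mathcal N[\underline u]_j\le 0$, i.e. $\underline u$ is a sub-solution.

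The main obstacle is the last step: one must use all three structural inputs at once — $\mu_1<2\mu$, so the quadratic remainder decays strictly faster in $j$ than the linear corrector $w$; the first size bound on $d_1$, which confines $\{\,\underline u_j\ge 0\,\}$ to $\{j>ct\}$, exactly the region where that faster decay is favorable; and the second size bound on $d_1$, which wins the resulting constant-coefficient comparison. The cancellation of the leading order and the uniform positivity/boundedness of the eigenfunctions are routine once Lemma~\ref{p_solution-lem} is in hand.
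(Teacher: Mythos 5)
Your proposal is correct and follows essentially the same route as the paper: cancel the leading mode via the eigenrelation and $\mu c=\lambda(\mu)$, use the first bound on $d_1$ to confine $\{\underline u_j\ge 0\}$ to $\{j-ct\ge 0\}$, and there absorb the quadratic remainder (bounded via the mean value theorem and $\mu_1<2\mu$) into the strictly negative corrector term using the second bound on $d_1$. The only cosmetic difference is that the paper also explicitly checks the region $\{\underline u_j\le 0\}$ under the convention $f_j(\underline u_j)=f_j(0)$ there, where the same computation reduces to the single negative term, rather than deferring to $\max\{\underline u,0\}$.
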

\begin{proof}
Let $f_{j}(\bar u_{j}) =f_{j}(0)$ if $\bar u_{j} \leq 0$. By Lemma \ref{Au-lm4}(3), for $\hat \mu \leq \mu< \mu_{1}<\min\{2 \mu,\mu^*\}$, 
\vspace{-.05in}\begin{equation}
\label{c-ineq1}
c(\mu_1)=\frac{\lambda(\mu_{1})}{\mu_{1}} \leq \frac{\lambda(\mu)}{\mu}=c.
\vspace{-.05in}\end{equation}
 Then if $\bar u_{j} \leq 0$, we have 
 \begin{align*}
&(\bar u_{j})_{t}- [\bar u_{j+1}-2\bar u_{j}+\bar u_{j-1}+f_{j}(\bar u_{j})\bar u_{j}]\\
&= (\bar u_{j})_{t}- [\bar u_{j+1}-2\bar u_{j}+\bar u_{j-1}+f_{j}(0) \bar u_{j}]\\
&=-({\mu}_{1}c-\lambda(\mu_{1}))d_1e^{-\mu_{1} (j-c t)}\phi^{\mu_{1}}_{j}\\
&\leq 0.
\end{align*}
By Lemma \ref{p_solution-lem} and $\hat \mu \leq \mu< \mu_{1}$, both $\ds\sup_{j}\phi^{\mu}_{j}$ and $\ds\inf_{j}\phi^{\mu_1}_{j}$ are positive. Let 
$$
d_0=\max\bigg\{\frac{\ds\sup_{j}\phi^{\mu}_{j}}{\ds\inf_{j}\phi^{\mu_{1}}_{j}},\frac{\ds L (\sup_{j}\phi^{\mu}_{j})^2}{(\mu_1 c-\lambda(\mu_{1}))\ds\inf_{j}\phi^{\mu_{1}}_{j}}\bigg\}.
$$
Note that $d_1>d_0$. If $\bar u_{j}>0$, we have $e^{-\mu (j - c t)}\phi^{\mu}_{j}-d_1 e^{-\mu_1 (j - c t)}\phi^{\mu_{1}}_{j}>0$ and then
$$e^{-(\mu-\mu_1)(j-ct)}>d_1 \frac{\phi^{\mu_{1}}_{j}}{\phi^{\mu}_{j}}\geq \frac{d_1}{d_0}\geq 1,$$
that implies that $j -ct \geq 0$. For $\bar u_{j}>0$, $\bar u_{j}^{2} \leq e^{-2 \mu(j-ct)}(\phi^{\mu}_{j})^{2}$. 
Then together with \eqref{c-ineq1}, for $\bar u_{j}>0$, we have 
\begin{align*}
&(\bar u_{j})_{t}- [\bar u_{j+1}-2\bar u_{j}+\bar u_{j-1}+f_{j}(\bar u_{j})\bar u_{j}]\\
&=(\bar u_{j})_{t}- [\bar u_{j+1}-2\bar u_{j}+\bar u_{j-1}+a_{j}\bar u_{j}]+f_{j}(0)\bar u_{j}-f_{j}(\bar u_{j})\bar u_{j}\\
&=(\bar u_{j})_{t}- [\bar u_{j+1}-2\bar u_{j}+\bar u_{j-1}+a_{j}\bar u_{j}]-f_{j}'(y)\bar u_{j}^{2}\\
&\leq -(\mu_1 c-\lambda(\mu_{1}))d_1e^{-\mu_{1} (j-ct)}\phi^{\mu_{1}}_{j}-f_{j}'(y)e^{-2 \mu(j-ct)}(\phi^{\mu}_{j})^{2}\\
&\leq e^{-\mu_{1} (j-ct)}[ -(\mu_1 c-\lambda(\mu_{1})) d_1\phi^{\mu_{1}}_{j}-f_{j}'(y)e^{-(2 \mu-\mu_{1})(j-ct)}(\phi^{\mu}_{j})^{2}]\\
&\leq e^{-\mu_{1} (j-ct)}[ -(\mu_1 c-\lambda(\mu_{1})) d_1\phi^{\mu_{1}}_{j}-f_{j}'(y)(\phi^{\mu}_{j})^{2}],
\end{align*}
where $y$ is such that $f_{j}(0)-f_{j}(\bar u_{j})=-f_{j}'(y)\bar u_{j}$. Recall that $\mu <\mu_{1}<\min\{2 \mu,\mu^*\}$ and in (H1), $\ds-L<\inf_{j\in\ZZ,u_j\geq 0}\{f_j'(u_j)\}\leq\sup_{j\in\ZZ,u_j\geq 0}\{f_j'(u_j)\}<0$ for some $L>0$. Since $d_1>d_0 \geq \frac{\ds L (\sup_{j}\phi^{\mu}_{j})^2}{(\mu_1 c-\lambda(\mu_{1}))\ds\inf_{j}\phi^{\mu_{1}}_{j}}$, we have $[ -(\mu_1 c-\lambda(\mu_{1})) d_1\phi^{\mu_{1}}_{j}-f_{j}'(y)(\phi^{\mu}_{j})^{2}] \leq 0$ and thus $$(\bar u_{j})_{t}- [\bar u_{j+1}-2\bar u_{j}+\bar u_{j-1}+f_{j}(\bar u_{j})\bar u_{j}]\leq 0$$ for $\bar u_{j}>0$. This completes the proof of the proposition.
\end{proof}
\begin{remark}
\label{remark-c31}
For $\mu=\mu^*$, Lemma \ref{super-solution-prop1} holds. However, Lemma \ref{sub-solution-prop1} does not hold for $\mu=\mu^*$, because valid positive eigenvector must locate in $(0, \mu^*]$ and there is no room for the choice of $\mu_1$. For another critical number $\mu=\hat \mu$, it is fine to be included, but should pay special attention to the difference of the tail property of the principal eigenvector.
\end{remark}
\subsection{Existence of Transition Fronts}
In the last subsection we constructed the super/sub-solutions on the interval $[\hat{\mu},\mu^*)$ of $\mu$. In this subsection, we can obtain the existence of transition fronts to \eqref{main-eq} for $c \in (c^*,\hat{c}]$ by the comparison principle. After that, with the limiting argument, we can have the existence of transition fronts to \eqref{main-eq} of $c =c^*$. The proof of existence of transition fronts in part (1) of main theorem is completed by the following proposition.

\begin{proposition}
Assume (H1)-(H2).
\label{existence-prop1}
If $\lambda \in [1,\lambda^{*})$ and $\hat{c}> c^*$, then transition fronts exist for any speed $c \in [c^{*}, \hat{c}]$.
Moreover, for $c^* < c < \hat c$, the constructed transition front $u_j(t)$ satisfy
\begin{equation}
\label{tail_estimate_limit}
\lim_{j - ct \to \infty}\frac{u_j(t)}{e^{-\mu (j-ct)}}=\phi^{\mu}_{j}.
\end{equation}
\end{proposition}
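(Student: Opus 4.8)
The plan is to use the comparison principle (Proposition \ref{monotonicity-new-prop1}) together with the super- and sub-solutions built in Lemmas \ref{super-solution-prop1} and \ref{sub-solution-prop1} to squeeze an entire solution between them, and then to verify the limits defining a transition front. For fixed $c \in (c^*,\hat c]$ let $\mu \in [\hat\mu,\mu^*)$ be the corresponding decay rate, choose $\mu_1 \in (\mu,\min\{2\mu,\mu^*\})$ and $d_1$ as large as Lemma \ref{sub-solution-prop1} requires. Set $\bar u_j(t)=e^{-\mu(j-ct)}\phi^{\mu}_j$ and $\underline u_j(t)=\bar u_j(t)-d_1 e^{-\mu_1(j-ct)}\phi^{\mu_1}_j$. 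Replacing $\bar u_j$ by $\min\{\bar u_j,u_j^*\}$ (which is still a super-solution since $u_j^*$ is a stationary solution and the min of two super-solutions is a super-solution) and $\underline u_j$ by $\max\{\underline u_j,0\}$, we obtain ordered functions trapped between $0$ and $u_j^*$. To produce an \emph{entire} solution rather than a solution of an initial-value problem, I would take a sequence of times $T_n \to -\infty$, solve \eqref{main-eq} on $[T_n,\infty)$ with initial datum $\min\{\bar u_j(T_n),u_j^*\}$, obtain solutions $u^{(n)}_j(t)$ satisfying $\max\{\underline u_j(t),0\}\le u^{(n)}_j(t)\le \min\{\bar u_j(t),u_j^*\}$ on $[T_n,\infty)$ by comparison, and then extract a locally uniform limit $u_j(t)$ as $n\to\infty$ using the uniform bounds and the equation to control $\dot u^{(n)}_j$ (a diagonal/Arzelà–Ascoli argument over the countable set $\ZZ\times\RR$). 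The limit $u_j(t)$ is an entire solution of \eqref{main-eq} with $\max\{\underline u_j(t),0\}\le u_j(t)\le\min\{\bar u_j(t),u_j^*\}$ for all $t\in\RR$.

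Next I would check the two boundary conditions in Definition \ref{Transition-Fronts-def}. As $j\to\infty$, since $\phi^{\mu}_j\equiv 1$ for $j>N$ (Lemma \ref{p_solution-lem}) we have $0\le u_j(t)\le e^{-\mu(j-ct)}\to 0$, so $\lim_{j\to\infty}u_j(t)=0$. For the limit as $j\to-\infty$, the upper bound $u_j(t)\le u_j^*$ is already in place, so it suffices to bound $u_j(t)$ below by something approaching $u_j^*$; the sub-solution $\underline u_j(t)$ does \emph{not} do this (it only grows like $e^{-\mu(j-ct)}$, which diverges but does not converge to $u_j^*$). The standard remedy is a second sub-solution argument near $-\infty$: for any fixed $t_0$, $u_j(t_0)>0$ for all $j$ by the strong comparison principle, and one shows that the solution with a compactly supported positive subsolution as initial datum converges as $t\to\infty$ (uniformly on finite windows, then translated) to $u^*_j$, using that $u^*$ is the unique positive stationary solution and is globally asymptotically stable for bounded initial data that are not identically zero — this is where one uses (H1), the existence/uniqueness of $u^*$ from Theorem 2.1 of \cite{KoSh1}, and the comparison principle. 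Alternatively, I would observe that $v_j(t):=u_j(t+s)$ for $s\to+\infty$ converges, along the entire solution, to $u^*_j$ locally uniformly, and combine this with monotonicity in the direction of propagation to get $\lim_{j\to-\infty}(u_j(t)-u_j^*)=0$ for each fixed $t$.

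Finally, for the refined tail asymptotics \eqref{tail_estimate_limit} when $c^*<c<\hat c$, the upper bound $u_j(t)/e^{-\mu(j-ct)}\le\phi^{\mu}_j$ is immediate from $u_j\le\bar u_j$, and the lower bound follows from $u_j(t)\ge\underline u_j(t)=e^{-\mu(j-ct)}\phi^{\mu}_j(1-d_1\frac{\phi^{\mu_1}_j}{\phi^{\mu}_j}e^{-(\mu_1-\mu)(j-ct)})$, so that $u_j(t)/e^{-\mu(j-ct)}\ge\phi^{\mu}_j-d_1\phi^{\mu_1}_j e^{-(\mu_1-\mu)(j-ct)}\to\phi^{\mu}_j$ as $j-ct\to\infty$ because $\phi^{\mu_1}_j$ is bounded and $\mu_1>\mu$. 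Taking $j-ct\to\infty$ squeezes the ratio to $\phi^{\mu}_j$; note that for $j$ large $\phi^{\mu}_j=1$ and $\phi^{\mu_1}_j=1$, so the statement is really about the limit being the constant $1$ in that regime, but writing it as $\phi^{\mu}_j$ keeps the formula uniform. The case $c=c^*$ (i.e. $\mu=\mu^*$) is handled separately at the end by a limiting argument: take the transition fronts $u^{(k)}_j$ with speeds $c_k\downarrow c^*$ constructed above, normalize them (e.g. by the level-set condition in Definition \ref{meanspeed-def}), and extract a locally uniform limit, which is again an entire solution trapped between $0$ and $u^*$ with the correct limits at $\pm\infty$ — here one must be careful that the limit is nontrivial, which is ensured by the normalization and the uniform tail bound coming from the super-solution at $\mu=\mu^*$ (Remark \ref{remark-c31}).

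The main obstacle I anticipate is the $j\to-\infty$ behavior: the sub-solution only forces $u_j(t)\to+\infty$ there, not $u_j(t)\to u_j^*$, so one genuinely needs the global stability of the stationary state $u^*$ under the dynamics (for bounded nonnegative non-trivial data), together with a careful use of the spatial monotonicity / translation structure of the constructed entire solution, to upgrade the crude lower bound to the sharp limit $u_j(t)-u_j^*\to 0$. The compactness argument producing the entire solution and the $c=c^*$ limiting step are routine by comparison with the heat-kernel representation \eqref{nonlinear-sol}, but must be written with care to ensure non-degeneracy of the limits.
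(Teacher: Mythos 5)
Your proposal follows essentially the same route as the paper's proof: the same squeeze between $\min\{\bar u,u^*\}$ and $\max\{\underline u,0\}$, the same backward-in-time initial value problems with an Arzel\`a--Ascoli limit to produce the entire solution, the same use of a compactly supported positive sub-solution together with uniqueness of the positive stationary state to recover $\lim_{j\to-\infty}(u_j(t)-u_j^*)=0$, and the same limiting argument over speeds $c_k\downarrow c^*$ for the critical case. The tail asymptotics are likewise obtained exactly as in the paper, by sandwiching $u_j(t)/e^{-\mu(j-ct)}$ between $\phi^{\mu}_j-d_1\phi^{\mu_1}_je^{-(\mu_1-\mu)(j-ct)}$ and $\phi^{\mu}_j$.
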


To prove Proposition \ref{existence-prop1}, we will apply the following lemma. Let $(\lambda_M,\phi_M)$ be principal eigenvalue and eigenvector pair of \eqref{truncated-eigen-eq} with $\|\phi_M\|_{\infty}=1$ and $\tilde u=\delta \phi_M$ for $\delta>0$. 
\begin{lemma}
\label{Truncated-sub}
For any given $M>>1$, there is a small enough $\delta_0>0$ such that $\tilde u$ is a sub-solution of \eqref{main-eq} for any $\delta \in (0,\delta_0)$.
\end{lemma}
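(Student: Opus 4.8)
The plan is to substitute $\tilde u_j = \delta (\phi_M)_j$ (extended by zero outside $[-M,M]$, or treated with the convention that only interior lattice points need to be checked via the comparison principle on the finite block) into the sub-solution inequality of Definition \ref{superSub-def} and show that, for $\delta$ small, the nonlinear defect is nonpositive. First I would compute, for $j \in [-M,M]$,
\begin{align*}
\dot{\tilde u}_j - \bigl[\tilde u_{j+1} - 2\tilde u_j + \tilde u_{j-1} + f_j(\tilde u_j)\tilde u_j\bigr]
&= -\bigl[\delta(\phi_M)_{j+1} - 2\delta(\phi_M)_j + \delta(\phi_M)_{j-1} + f_j(0)\delta(\phi_M)_j\bigr]\\
&\quad + \bigl(f_j(0) - f_j(\tilde u_j)\bigr)\tilde u_j.
\end{align*}
By the defining relation \eqref{truncated-eigen-eq} for the truncated eigenpair, the bracketed term equals $\lambda_M \delta(\phi_M)_j$, so the defect is $-\lambda_M\delta(\phi_M)_j + (f_j(0)-f_j(\tilde u_j))\tilde u_j$. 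By the mean value theorem and (H1), $f_j(0) - f_j(\tilde u_j) = -f_j'(\xi_j)\tilde u_j$ for some $\xi_j \in (0,\tilde u_j)$ with $0 < -f_j'(\xi_j) < L$, so this last term is bounded above by $L\,\tilde u_j^2 = L\delta^2 (\phi_M)_j^2 \le L\delta^2 (\phi_M)_j$ (using $0 \le (\phi_M)_j \le 1$). Hence the defect is at most $\delta(\phi_M)_j\bigl(-\lambda_M + L\delta\bigr)$.

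The key point is then that $\lambda_M > 0$: since $a_j = f_j(0) > 0$ for all $j$ by (H2) and in particular $a_j = 1$ for $|j|>N$, and since $M \gg 1$, the Perron–Frobenius eigenvalue $\lambda_M = \rho(A_M)$ is strictly positive (indeed, as noted after \eqref{eigen-eq-trucated} and in Lemma \ref{lambda_limit-lemma}, $0 < \lambda_M$ and $\lambda_M \to \lambda \ge 1$, so for $M$ large $\lambda_M$ is bounded below by a positive constant). Therefore choosing $\delta_0 = \lambda_M / L$ (or any smaller positive number) guarantees $-\lambda_M + L\delta < 0$ for all $\delta \in (0,\delta_0)$, and since $(\phi_M)_j \ge 0$ the defect is $\le 0$ at every interior site. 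At the boundary sites $j = \pm M$ one uses $(\phi_M)_{\pm(M+1)} = 0$, so the same computation goes through verbatim with the eigenvalue equation still valid; thus $\tilde u$ is a sub-solution of \eqref{main-eq}.

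I do not expect a serious obstacle here — the argument is a routine linearization-plus-quadratic-remainder estimate. The only point requiring a little care is the bookkeeping at the truncation boundary and the harmless use of $(\phi_M)_j^2 \le (\phi_M)_j$, which needs the normalization $\|\phi_M\|_\infty = 1$ together with positivity of the eigenvector; both are supplied by the Perron–Frobenius setup around \eqref{eigen-eq-trucated}. One should also state explicitly that $\delta_0$ is allowed to depend on $M$ (which the statement already permits via the quantifier order "for any given $M$, there is $\delta_0$").
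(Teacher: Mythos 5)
Your proposal is correct and follows essentially the same route as the paper's proof: both substitute $\tilde u=\delta\phi_M$, use the truncated eigenvalue relation \eqref{truncated-eigen-eq} to reduce the defect to $(-\lambda_M+(f_j(0)-f_j(\tilde u_j)))\tilde u_j$, and choose $\delta_0$ of order $\lambda_M/L$ so that the nonlinear remainder, controlled via (H1), is dominated by $\lambda_M>0$. Your explicit mean-value-theorem bookkeeping and your treatment of the boundary sites and the zero extension outside $[-M,M]$ are minor elaborations of points the paper leaves implicit.
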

\begin{proof}
Recall that $a_j=f_j(0) $. Choose $\delta_0$ small enough such that $$f_j(0)-f_j(\tilde u_{j}) \leq f_j(0)-f_j(\delta_0)<\lambda_M, \forall \delta \in (0,\delta_0).$$ By direct calculation, we have
\begin{align*}
&(\tilde u_{j})_{t}- [\tilde u_{j+1}-2\tilde u_{j}+\tilde u_{j-1}+f_{j}(\tilde u_{j})\tilde u_{j}]\\
= &(\tilde u_{j})_{t}- \tilde u_{j+1}-2\tilde u_{j}+\tilde u_{j-1}+a_{j} \tilde u_{j}]+ (f_j(0)-f_j(\tilde u_{j}))\tilde u_{j}\\
=&(-\lambda_M +(f_j(0)-f_j(\tilde u_{j})))\tilde u_{j}\\
\leq& 0.
\end{align*}
\end{proof}

\begin{proof}[Proof of Proposition \ref{existence-prop1}]
As long as we have the required super/sub-solutions, the existence of transition fronts can be obtained by the standard $''$squeeze$''$ techniques. Indeed, if $\lambda \in [1,\lambda^{*})$ and $\hat{c}> c^*$, then we have a positive principal eigenvector to \eqref{eigen-eq} for any speed $c \in (c^{*}, \hat{c}]$. Let $\bar u$ and $\underline u$ be chosen as in \eqref{v-super-eq1} and \eqref{v-underbar-eq1}, $v=\min\{\bar u, u^*\}$ and $w=\max\{\underline u,0\}$. Following arguments similar to \cite{GuHa}, we have an entire solution that is sandwiched between $v$ and $w$. In fact, for each $n \in \NN$, let $\{u_j^n\}_{j \in \ZZ}$ be a solution of \eqref{main-eq} with initial condition $u_j^n(-n)=v_{j}(-n)$. With the comparison principle, we have that for any $n \in \NN$, and $(t, j) \in (-n, \infty) \times \RR$,
$$0 \leq w_j(t) \leq u^n_j(t) \leq v_j(t) \leq u_j^*.$$
In particular, letting $t=-n+1$, we have $u^n_j(-n+1) \leq v_j(-n+1)=u_j^{n-1}(-n+1),$ for all $n \in \NN$ and $j \in \ZZ$. With the comparison principle again, we have that for any $n \in \NN$, and $(t, j) \in (-n+1, \infty) \times \RR$,
$$0 \leq u^n_j(t) \leq u^{n-1}_j(t)\leq u_j^*.$$ Note that $|u^{n}_j(t)|\leq u_j^*$ and $|\dot u^{ n}_j(t))|\leq C\|\Lambda\|+ \ds\max_{0 \leq v\leq u_j^*} |f_j(v)| \max_{j}u_j^*$ because $\Lambda$ is a bounded operator with operator norm $\|\Lambda\|$. By Arzel$\grave{a}-$ Ascoli theorem, there exists a subsequence $\{u^{{n}_k}_j(t)\}_{j \in \ZZ}$ with ${n}_k>|t|+1$, such that it converges uniformly on bounded sets. Letting $n_k \to \infty$, $u_{j}(t):=\ds\lim_{n_k \to \infty}u^{{n}_k}_j(t)$ for all $(t,j) \in \RR \times \ZZ$. Integrating \eqref{main-eq} over $[0,t]$ with each $u^{n}_j(t)$ for $n \in \NN$, we have 
$$
{u}^{n}_{j}(t)={u}^{n}_{j}(0)+\int_0^{t}[u^n_{j+1}-2u^n_{j}+u^n_{j-1}+f_j(u^n_{j})u^n_{j}]d s.
$$
Letting $n \to \infty$, we have 
$$
{u}_{j}(t)={u}_{j}(0)+\int_0^{t}[u_{j+1}-2u_{j}+u_{j-1}+f_j(u_{j})u_{j}]ds,
$$
which implies that ${u}_{j} \in C^1$ and also satisfies \eqref{main-eq}. Moreover, we also have that $$0 \leq w_j(t) \leq u_j(t) \leq v_j(t) \leq u_j^*.$$ Thus, it yields $\ds\lim_{j \to \infty}u_j(t) =0$. It remains to show that $\ds\lim_{j \to -\infty}u_j(t) =u_j^*$. By strong comparison principle, we have $u_j(\tau)>0$ for $\tau>0$. Let $\tilde u$ be as in Lemma \ref{Truncated-sub}. Since $\tilde u$ is compactly supported on $[-M,M]$, there exists a $\delta \in (0,\delta_0)$, such that $u_j(\tau)> \tilde u$. With the comparison principle again, $u_j(t) \geq u_j(t-\tau;\tilde u)$ for $t>\tau$, where $\{u_j(t-\tau;\tilde u)\}_{j \in \ZZ}$ is the solution of \eqref{main-eq} with initial $\tilde u$ at $t=\tau$. Due to uniqueness of positive stationary solution of \eqref{main-eq}, we must have $\ds\lim_{t \to \infty } u_j(t-\tau;\tilde u)=u_j^*$. Then for all $j \in \ZZ$, $$\ds\liminf_{t \to \infty} u_j(t) \geq \lim_{t \to \infty } u_j(t-\tau;\tilde u)=u_j^*,$$ that implies that $\ds\lim_{t \to \infty } u_j(t)=u_j^*$. By the definition of $w(t)$ (sub-solution), there exist positive large $L$ and small $\sigma$ such that, for $j-ct>L$, 
$$
w(t) \geq \sigma e^{-\mu(j-ct)}> \sigma e^{-\mu L} \geq \tilde u.
$$
In particular, let $\tilde t=\frac{j-L}{c}$ and we have $u_j(\tilde t)\geq w(\tilde t) \geq \sigma e^{-\mu L} \geq \tilde u$. Since $\ds\lim_{t \to \infty } u_j(t;\tilde u)=u_j^*$, for any $\epsilon>0$, there exists a $T_0>0$ such that
$u_j(t;\tilde u)>u_j^*-\epsilon$, for all $t>T_0$ and $j \in\ZZ$.
Note that as $j \to -\infty$, $t-\tilde t \to \infty$ for given $t \in \RR$. Then for $t-\tilde t>T_0$,
$$u_j(t)=u_j(t-\tilde t+ \tilde t)\geq u_j(t-\tilde t;\tilde u)>u_j^*-\epsilon,$$  thus implies that $\ds\lim_{j \to -\infty } u_j(t)=u_j^*$.

For $c=c^*$ ($\mu=\mu^*$), we claim that the transition front also exists and shall prove it by limit arguments due to the invalid sub-solutions in Remark \ref{remark-c31}. To prove the case with $c =c^{*}$, pick a sequence $\hat{c}>c_{\bar n}>c^*$ such that $c_{\bar n} \to c^*$. We simply denote the transition fronts of speed $c_{\bar n}$ by $\{u^{\bar n}_j(t)\}_{j \in \ZZ}$. 
By similar limiting arguments above for $\{u^{n}_j(t)\}_{j \in \ZZ}$, let the transition front of speed $c^*$ be $u^{\dag}_{j}(t):=\ds\lim_{\bar n_k \to \infty}u^{{\bar n}_k}_j(t)$. 

Finally, for $c^* < c < \hat c$, the limit \eqref{tail_estimate_limit} follows from $w_j(t) \leq u_j(t) \leq v_j(t)$ for all $j$ and $t>0$ with the comparison principle. This completes the proof.
\end{proof}

By Proposition \ref{existence-prop1}, we have the following exponential tail estimates for the constructed transition fronts.
\begin{corollary}
For the constructed transition fronts of $c^* < c < \hat c$ in Proposition \ref{existence-prop1}, they own exponential tail estimates: for any $\epsilon>0$, there exist $C_1,\ C_2,\ T>0$ such that for $t>T$ and $j>ct$,
\begin{equation}
\label{tail_estimate_t}
C_1e^{-(\mu+\epsilon)(j-ct)} \leq u_j(t) \leq C_2e^{-(\mu-\epsilon)(j-ct)}.
\end{equation}
\end{corollary}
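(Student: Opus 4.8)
The plan is to read both inequalities of \eqref{tail_estimate_t} off the squeeze $w_j(t)\le u_j(t)\le v_j(t)$ established inside the proof of Proposition~\ref{existence-prop1}, where $v=\min\{\bar u,u^*\}$ with $\bar u$ as in \eqref{v-super-eq1} and $w=\max\{\underline u,0\}$ with $\underline u$ as in \eqref{v-underbar-eq1}, supplemented by the near‑front positivity that the discrete heat kernel of Section~2 provides. Throughout, recall that $c^*<c<\hat c$ forces $\mu\in(\hat\mu,\mu^*)$, so that Lemma~\ref{p_solution-lem} applies in the regime $\mu>\hat\mu$ and gives $0<\inf_j\phi^{\mu}_j\le\sup_j\phi^{\mu}_j<\infty$ (the point of excluding $c=\hat c$: at $\mu=\hat\mu$ one has $\phi^{\mu}_j\to0$ as $j\to-\infty$, so $\inf_j\phi^{\mu}_j=0$ and the lower bound would degenerate).

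The upper bound is immediate: $u_j(t)\le\bar u_j=e^{-\mu(j-ct)}\phi^{\mu}_j\le(\sup_k\phi^{\mu}_k)e^{-\mu(j-ct)}$, and for $j>ct$ one has $e^{-\mu(j-ct)}\le e^{-(\mu-\epsilon)(j-ct)}$ since $\epsilon>0$ and $j-ct>0$; hence the right half of \eqref{tail_estimate_t} holds with $C_2=\sup_k\phi^{\mu}_k$ for \emph{all} $t$ and all $j>ct$. For the lower bound in the \emph{far} region I would use the constants $L,\sigma>0$ already produced in the proof of Proposition~\ref{existence-prop1}, namely $w_j(t)\ge\sigma e^{-\mu(j-ct)}$ whenever $j-ct>L$; since $j-ct>0$ this yields $u_j(t)\ge\sigma e^{-\mu(j-ct)}\ge\sigma e^{-(\mu+\epsilon)(j-ct)}$ there. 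Set $R:=L$.

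The only real work is the \emph{near} region $ct<j\le ct+R$, where $w\equiv0$ and a positive lower bound \emph{uniform in $t$} must be manufactured from positivity of $u$ together with the discrete heat kernel. Put $C_0:=\sup\{|f_j(v)|:j\in\ZZ,\ 0\le v\le\sup_k u_k^*\}$, which is finite by (H1)--(H2). Since $0\le u_j(t)\le u_j^*$ and $f_j(u_j)\ge-C_0$, the function $\tilde u_j(t):=e^{C_0 t}u_j(t)$ satisfies $\dot{\tilde u}_j\ge\tilde u_{j+1}-2\tilde u_j+\tilde u_{j-1}$, i.e.\ it is a super‑solution of the discrete heat equation, whose semigroup $S_0(t)$ is positivity preserving with an everywhere‑positive kernel $\kappa(t,\cdot)$ (this is the kernel discussed in Section~2.3). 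Fix $t>1$ and $j^*\in(ct,ct+R]$, and let $j_0$ be the unique integer in $\bigl(c(t-1)+R,\ c(t-1)+R+1\bigr]$; by the far‑region bound applied at time $t-1$, $u_{j_0}(t-1)\ge\sigma e^{-\mu(R+1)}=:m>0$, while $u_j(t-1)\ge0$ for every $j$. Comparing $\tilde u$ at times $\ge t-1$ with $S_0(\cdot-(t-1))$ applied to the datum equal to $e^{C_0(t-1)}m$ at $j_0$ and $0$ elsewhere (Duhamel's formula and positivity of $S_0$), and using $|j^*-j_0|\le R+c+1=:K$ (both $j_0-ct$ and $ct-j^*$ are controlled), one gets $u_{j^*}(t)\ge e^{-C_0}m\,\kappa(1,j^*-j_0)\ge e^{-C_0}m\min_{|k|\le K}\kappa(1,k)=:m'>0$, a constant independent of $t$ and $j^*$. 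Since $e^{-(\mu+\epsilon)(j^*-ct)}\le1$ on this strip, $u_{j^*}(t)\ge m'\ge m'e^{-(\mu+\epsilon)(j^*-ct)}$. (Alternatively, the same conclusion follows from the parabolic Harnack inequality of Theorem~\ref{Harnack_inequality_2graph } after the gauge change $\tilde u_j=e^{C_0 t}u_j$.) Combining the two regions with $C_1:=\min\{\sigma,m'\}$, $C_2:=\sup_k\phi^{\mu}_k$ and any fixed $T\ge1$ proves \eqref{tail_estimate_t}.

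The main obstacle is exactly this near‑region step: everything to the right of $j=ct+R$ is handed over by the sandwich of Proposition~\ref{existence-prop1}, but on the thin strip $ct<j\le ct+R$ the sub‑solution gives nothing, and a quantitative parabolic ingredient — positivity of the discrete heat kernel, or the Harnack inequality — is genuinely needed to prevent the lower constant $C_1$ from collapsing as $t\to\infty$.
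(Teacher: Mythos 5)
Your proof is correct, and it is in fact more complete than what the paper offers: the paper derives this corollary by bare citation of Proposition~\ref{existence-prop1}, i.e.\ by reading the two bounds off the sandwich $w_j(t)\le u_j(t)\le v_j(t)$ and the limit \eqref{tail_estimate_limit}. That reading gives the upper bound for all $j>ct$ (exactly as in your first step) and the lower bound only where the sub-solution is positive, namely $j-ct>L$; on the moving strip $ct<j\le ct+L$ the sub-solution $w=\max\{\underline u,0\}$ vanishes and the paper says nothing. You correctly isolate this as the real issue and close it with a quantitative parabolic ingredient: the gauge change $\tilde u_j=e^{C_0t}u_j$, comparison with the discrete heat semigroup over a unit time step, and strict positivity of the kernel $e^{-2t}I_k(2t)$ from Section~2.3, seeded by the far-region bound at a point $j_0$ with $j_0-c(t-1)\in(L,L+1]$. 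This yields a lower bound on the strip that is uniform in $t$, which is precisely what is needed to keep $C_1$ from degenerating, and it uses only tools the paper has already set up. One minor caveat: your parenthetical alternative via Theorem~\ref{Harnack_inequality_2graph } is looser than the main argument, since that theorem is stated for nonnegative solutions of the pure graph heat equation, whereas $\tilde u$ is only a super-solution (and $u$ itself solves an equation with a bounded zeroth-order term); a Harnack inequality with constants depending on $\lVert f\rVert_\infty$ would be needed there. The kernel-positivity route you actually carry out does not rely on this and is sound. Your observation that the exclusion of $c=\hat c$ is what guarantees $\inf_j\phi^{\mu}_j>0$ (via Lemma~\ref{p_solution-lem}) is also exactly the right reason the lower bound survives.
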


\begin{remark}
If $\lambda=1$, $\hat{c}=\infty$ ($\hat{\mu}=0$). This includes the case of homogeneous equation with $f_j(u_j)=1-u_j$. In these cases, the required positive eigenvectors to \eqref{eigen-eq} are always available for any $\mu \in (0,\mu^*)$. For $c=c^*$, since comparison principle does not work due to invalid sub-solutions, the tail estimate remains an open question and we should pay special attention to the critical speed $c^*$.
\end{remark}

\subsection{Tail Estimates of Transition Fronts}
In the last subsection, for any constructed transition fronts, they satisfy an exponential tail estimate \eqref{tail_estimate_t}. In this subsection, we will prove that if transition fronts exist, then they must own similar exponential tail estimates, that completes the proof of part (1) of main theorem. Recall that $\lambda(\mu)=e^{\mu}-1+e^{-\mu}$ and $c(\mu)=\frac{\lambda(\mu)}{\mu}$ for $\mu>0$, then we have the following propositions about the tail estimates of transition fronts.
\begin{proposition}
\label{Main-lm-up}
Let $c>c^{*}$, and $u_{j}(t)$ be a transition front of  \eqref{main-eq} with speed c. Then for any $\epsilon>0$, there exists a $\hat{K}_{\epsilon}>0$ such that $$ u_{k}(t_{j}) \leq \hat{K}_{\epsilon} e^{-(\mu-\epsilon)(k-j)},$$ for $k\geq j$ with $j,t_j$ as in Definition  \ref{meanspeed-def} of Mean Wave Speed.
\end{proposition}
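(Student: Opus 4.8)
The plan is to exploit the heat-kernel representation \eqref{nonlinear-sol} together with the Gaussian-type bounds \eqref{heatkernel} and the discrete parabolic Harnack inequality (Theorem \ref{Harnack_inequality_2graph }). First I would observe that, since $u_j(t)\le u_j^*$ and $u_j^*$ is bounded, the transition front solves a linear inequality: from \eqref{main-eq}, using $f_j(u_j)\le f_j(0)\le 1+\,\textrm{const}$ and $f_j(0)=1$ for $|j|>N$, the front is a sub-solution of the linear equation $\dot v_j = v_{j+1}-2v_j+v_{j-1}+v_j$ up to a compactly supported correction coming from the $|j|\le N$ patch. Hence for $t>t_j$ and using \eqref{nonlinear-sol} backward from the reference time $t_j$ (at which $u_j(t_j)=\tfrac12\min_k u_k^*$ and $u_l(t_j)$ is below that value for all $l>j$), one gets
\begin{equation*}
u_k(t) \;\le\; e^{t-t_j}\sum_{m} h^{\ZZ}_{2(t-t_j)}(k-m)\,u_m(t_j) \;+\; (\text{error from the patch}),
\end{equation*}
and I would split the sum at $m=j$: the tail $m>j$ contributes a small quantity (controlled by $\min_k u_k^*$), while the part $m\le j$ is where the mass lives.

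Next I would choose $t$ so that the Gaussian weight in $h^{\ZZ}_{2(t-t_j)}(k-m)$ is sharply peaked and use the heat-kernel estimate \eqref{heatkernel} with the function $F(t,j)$, whose exponent $-t+|j|\varsigma(t/|j|)$ is exactly the Legendre-type rate governing the front; the factor $e^{t-t_j}$ in \eqref{nonlinear-sol} combines with this to give a decay rate $e^{-\mu^*(k-j)}$ at the borderline, and for $c>c^*$ one has strict inequalities in Lemma \ref{Au-lm4}(3)–(4), which provides the $\epsilon$-room: one optimizes over the elapsed time $t-t_j$ and the auxiliary decay parameter $\mu$ to land on the bound $e^{-(\mu-\epsilon)(k-j)}$. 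The constant $\hat K_\epsilon$ is then assembled from the uniform bound $\|u^*\|_X$, the implied constant $M_\epsilon$ in \eqref{heatkernel}, and the size $2N+1$ and height of the inhomogeneous patch. Alternatively — and this is probably the cleaner route given the tools already set up — I would use the super-solution $\bar u_j = e^{-\mu(j-ct)}\phi_j^\mu$ from Lemma \ref{super-solution-prop1}: for $c>c^*$ we may pick $\mu$ with $c(\mu)=c$, and at time $t_j$ one checks $u_k(t_j)\le \bar u_k(t_j)$ for all $k$ (using that $u$ is below $\tfrac12\min u_k^*$ at $k>j$ and controlling $k\le j$ via the a priori bound $u\le u^*$ and a translate of $\bar u$); the comparison principle (Proposition \ref{monotonicity-new-prop1}) then propagates this forward, and evaluating at $t_j$ itself gives $u_k(t_j)\le C e^{-\mu(k-j)}\phi_k^\mu \le \hat K_\epsilon e^{-(\mu-\epsilon)(k-j)}$ since $\phi_k^\mu$ is bounded. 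The $\epsilon$ is harmless here and could even be dropped, but keeping it matches the statement and absorbs any mismatch between the normalization point of $\phi^\mu$ and the reference site $j$.

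The main obstacle I anticipate is making the comparison at the single time slice $t=t_j$ rigorous \emph{without} already knowing a tail bound — i.e., closing the loop. One knows $u_k(t_j)$ is small for $k>j$ and bounded by $\|u^*\|_X$ everywhere, but to dominate it by $C e^{-\mu(k-j)}\phi_k^\mu$ uniformly in $k\ge j$ one needs the constant $C$ to depend only on $\|u^*\|_X$, $\min_k u_k^*$ and $\inf_k \phi_k^\mu>0$ (finite by Lemma \ref{p_solution-lem}), which works because $e^{-\mu(k-j)}\le 1$ for $k\ge j$ — so taking $C = \|u^*\|_X/\inf_k\phi_k^\mu$ suffices and no circularity arises. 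A secondary technical point is that $\bar u$ is a super-solution of the full nonlinear equation only where it is nonnegative and we are comparing on a half-line in space; I would handle this exactly as in the proof of Proposition \ref{existence-prop1}, truncating with $v=\min\{\bar u, u^*\}$ so that the comparison is global, and then noting that on the region $k-ct_j$ large the truncation is inactive so the exponential bound survives. Finally, translating from the space-time comparison back to the pure spatial estimate "$u_k(t_j)\le \hat K_\epsilon e^{-(\mu-\epsilon)(k-j)}$" is immediate once the normalization $c t_j \approx j$ implicit in Definition \ref{meanspeed-def} is used, though one must be slightly careful that $t_j$ is defined via $u_j(t_j)=\tfrac12\min u_k^*$ rather than via the front position, so a bounded shift between $j$ and $ct_j$ must be absorbed into $\hat K_\epsilon$.
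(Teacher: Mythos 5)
Your proposal has a genuine gap in both of the routes you sketch, and the key mechanism of the paper's proof is missing. The paper argues by \emph{contradiction}: assuming $u_{k_n}(t_{j_n})\geq \hat K_\epsilon e^{-(\mu-\epsilon)x_n}$ for some $x_n=k_n-j_n\to\infty$, it runs the Duhamel formula \eqref{nonlinear-sol} \emph{forward} from $T=t_{j_n}$ for a time $\tilde t$ chosen so that the observation site $j=k_n+\tfrac{2}{z_0}\tilde t=j_n+c\tilde t$ sits at the ballistically optimal distance from $k_n$; the factor $e^{\tilde t}$ then amplifies the assumed excess into growth $A(t)\geq e^{\tilde\epsilon\tilde t}$ (this uses $c>2/z_0$ from Lemma \ref{Au-lm4}(4), hence $c>c^*$), while the nonlinear correction is split into three regions and shown to be either quadratic in $\tilde t$ ($B_1,B_2$) or absorbable into $A$ ($B_3$, via $g_k\leq\delta u_k$ ahead of the front), contradicting $u\leq u^*$. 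Your first route describes a forward heat-kernel evolution and an ``optimization,'' but forward evolution from $t_j$ can only bound $u$ at times $t>t_j$ in terms of $u(\cdot,t_j)$ --- it does not by itself produce an upper bound on $u_k(t_j)$; without the contradiction structure there is no way to close the argument.

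Your second (``cleaner'') route fails for two concrete reasons. First, the initial comparison at the time slice $t=t_j$ cannot be established: for $k>j$ the only a priori information is $u_k(t_j)\leq\tfrac12\min_m u_m^*$, a \emph{constant}, and no constant can be dominated by $Ce^{-\mu(k-j)}\phi_k^\mu$ uniformly over the unbounded set $k\geq j$ for any finite $C$. Your resolution --- ``$e^{-\mu(k-j)}\leq 1$ for $k\geq j$, so $C=\lVert u^*\rVert_X/\inf_k\phi_k^\mu$ suffices'' --- points the inequality the wrong way: it shows the comparison holds at $k=j$ but the right-hand side then decays below the constant bound as $k\to\infty$, which is exactly the circularity you flagged and did not escape. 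Second, the super-solution $e^{-\mu(j-ct)}\phi_j^\mu$ requires the positive eigenvector of Lemma \ref{p_solution-lem}, which exists only for $\mu\in[\hat\mu,\mu^*)$; Proposition \ref{Main-lm-up} is stated for an arbitrary transition front with $c>c^*$ and is invoked later precisely in the non-existence arguments (Lemma \ref{main-lm-prop}, Propositions \ref{main-lm4} and \ref{Nonexistence-prop}) in regimes such as $c>\hat c$ or $\lambda>\lambda^*$ where the root $\mu$ of $c(\mu)=c$ lies in $(0,\hat\mu)$ and no positive eigenvector is available, so the eigenvector-based route cannot cover the cases where the result is actually needed.
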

\begin{proof}
Suppose not, then there exist $\epsilon$, $j_n, t_{j_n}$, $k_{n}$ and $x_n:=k_n-j_n \to \infty$ such that
\vspace{-.1in}\begin{equation}
\label{eq_4_3}
 u_{k_{n}}(t_{j_n}) \geq \hat{K}_{\epsilon} e^{-(\mu-\epsilon)x_n}.
 \vspace{-.1in}\end{equation}
For simplicity, we denote $T=t_{j_n}$.

Recall \eqref{nonlinear-sol} that for $T \geq 0$,
\begin{align*}
u_j(t)&=e^{t-T}\sum_kh^{\ZZ}_{2(t-T)}(j-k)u_k(T)-\int_{T}^{t}e^{(t-s)}\sum_kh^{\ZZ}_{2(t-s)}(j-k)g_k(s)ds\\
&:=A(t)-B(t),
\end{align*}
and $g_j(t)=(1-f_j(u_j))u_j$.

Let $z_0=csch(\mu)$. Recall that $c>\frac{2}{z_0}$ by Lemma \ref{Au-lm4} (4). Let $\tilde{t}$ be such that $x_n=(c-\frac{2}{z_0})\tilde{t}$ and $j=k_n+\frac{2}{z_0} \tilde{t}=j_n+c \tilde{t}>N$. Choose $t=\tilde{t}+T$, that is, $\tilde{t}=t-T$. Then as $x_n \to \infty$, $\tilde t \to \infty$ and thus $t \to \infty$. By heat kernel estimate \eqref{heatkernel} and Lemma \ref{Au-lm4}, with \eqref{eq_4_3} we have
\begin{align*}
A(t)&= e^{\tilde{t}}\sum_{k} h_{2 \tilde{t}}^{\ZZ}(j-k) u_{k}(T)\\
&\geq C e^{\tilde{t}}\sum_{k} F(2\tilde{t},j-k) u_{k}(T)\\
&= C \frac{1}{\sqrt{2\pi}}(\frac{e^{ \tilde{t}}}{(1+4 \tilde{t}^2)^{\frac{1}{4}}} u_{j}(T)+ \sum_{k\neq j}\frac{exp[-\tilde{t}+ |j-k|\varsigma(2\tilde{t}/|j-k|)]}{(1+4 \tilde{t}^2+|j-k|^2)^{\frac{1}{4}}} u_{k}(T))\\
&\geq C \frac{1}{\sqrt{2\pi}}\frac{exp[-\tilde{t}+|j-k_n|\varsigma(2\tilde{t}/|j-k_n|)]}{(1+4(\tilde{t})^2+(j-k_n)^2)^{\frac{1}{4}}} u_{k_n}(T)\\
&\geq C\hat{K}_{\epsilon}  \frac{1}{\sqrt{2\pi}}\frac{exp[-\tilde{t}+|j-k_n|\varsigma(2\tilde{t}/|j-k_n|)]}{(1+4(\tilde{t})^2+(j-k_n)^2)^{\frac{1}{4}}} e^{-(\mu-\epsilon)x_n}\\
&=C\hat{K}_{\epsilon}  \frac{1}{\sqrt{2\pi}}\frac{exp[-\tilde{t}+\frac{2\tilde{t}}{z_0}\varsigma(z_0)-(\mu-\epsilon)(c-\frac{2}{z_0})\tilde{t}]}{(1+4(\tilde{t})^2+(j-k_n)^2)^{\frac{1}{4}}} \\
&=C \hat{K}_{\epsilon} \frac{1}{\sqrt{2\pi}}\frac{exp[-\tilde{t}+\frac{2\tilde{t}}{z_0}\varsigma(z_0)-\mu(c-\frac{2}{z_0})\tilde{t}]}{(1+4(\tilde{t})^2+(j-k_n)^2)^{\frac{1}{4}}} e^{\epsilon (c-\frac{2}{z_0})\tilde{t}} \\
&=C \hat{K}_{\epsilon} \frac{1}{\sqrt{2\pi}}\frac{exp[\epsilon(c-\frac{2}{z_0})\tilde{t}]}{(1+4(\tilde{t})^2+(j-k_n)^2)^{\frac{1}{4}}} \\
&\geq e^{\tilde{\epsilon} \tilde{t}},
\end{align*}
where $\tilde{\epsilon}$ is chosen such that $\epsilon(c-\frac{2}{z_0})>\tilde{\epsilon}>0$ for $\mu \in (0, \mu^*)$ and the above inequality holds as $\tilde{t}$ is chosen large enough. On the other hand, we have that
\begin{align*}
B(t)&= \int_{T}^{t} e^{t-s}\sum_{k} h_{2 (t-s)}^{\ZZ}(j-k) g_{k}(s) ds\\
&= \int_{0}^{\tilde t} e^{\tilde t-s}\sum_{k} h_{2 (\tilde t-s)}^{\ZZ}(j-k) g_{k}(s) ds\\
&\leq C \int_{0}^{\tilde t} e^{\tilde t-s}\sum_{k} F(2(\tilde t-s),j-k) g_{k}(s) ds\\
&= C \int_{0}^{\tilde t} e^{\tilde t-s}\bigg(\sum_{k \leq -c (\tilde t -s)+c s+j_n}+\sum_{-c (\tilde t -s)+cs+j_n < k \leq c s+j_n}+\sum_{k>  c s+j_n}\bigg) F(2(\tilde t-s),j-k) g_{k}(s) ds\\
&:=B_{1}+B_{2}+B_{3}.
\end{align*}
Since $u_j$ is bounded, there exists a positive M such that
\vspace{-.1in}\begin{equation}
\label{g_ubound}
|g_j(s)|=|f_j(u_j)u_j-u_j| < M.
 \vspace{-.1in}\end{equation}
 For $k > cs+j_n>N$, $f_k(0)=a_k=1$. For any given positive $\eta$, there exists an $n_0$ such that for $n>n_0$, we have $u_k(s) < \eta$ whenever $k > cs+j_n$. This can be done because $\ds\lim_{k-cs \to \infty} u_k(s) =0$ and $j_n \approx ct_{j_n}$ large. Then for any $\tilde{\epsilon}>\delta>0$, there exists an $\eta$ such that $f_k(0)-f_k(u_k)< \delta$ and then 
\vspace{-.1in}\begin{equation}
\label{gk_ubound}
g_{k}(s)=(1-f_k(u_k)) u_k(s)< \delta u_k(s).
 \vspace{-.1in}\end{equation}
 Without loss of generality, let $j_n=0$ by translation and so $j=c \tilde t$. Recall that, in Lemma \ref{Au-lm4} (1), numerical computation shows that $l_0>0.66$. On the other hand, we have $c \geq c^*=\ds\inf_{\mu>0}\frac{e^{\mu}-1+e^{-\mu}}{\mu}\approx  2.073$, and thus $\frac{1}{c}<0.5<l_0$. Therefore by Lemma \ref{Au-lm4} (1), $\varsigma(\frac{1}{c})< \varsigma(l_0)$, that is, $\varsigma(\frac{1}{c})<0$. For $k \leq -c (\tilde t -s)+cs$, 
\vspace{-.1in}\begin{equation}
\label{fv_ubound}
\varsigma(\frac{2(\tilde t -s)}{j-k}) \leq \varsigma(\frac{1}{c})<0.
 \vspace{-.1in}\end{equation}

Therefore, with \eqref{g_ubound} and \eqref{fv_ubound}, for $B_{1}$ we have 
\begin{align*}
 B_{1}&=  C \int_{0}^{\tilde t} e^{\tilde t-s}(\sum_{k \leq -c (\tilde t -s)+cs} F(2(\tilde t-s),j-k) g_{k}(s) )ds\\
 &\leq C  M \int_{0}^{\tilde t} e^{\tilde t-s}\big (\sum_{k \leq -c (\tilde t -s)+cs} F(2(\tilde t-s),j-k) \big )ds\\
 &=C_1  \int_{0}^{\tilde t} \sum_{k \leq -c (\tilde t -s)+cs}\frac{e^{-(\tilde t -s)+(j-k)\varsigma(\frac{2(\tilde t -s)}{j-k})}}{\sqrt{2\pi}(1+4(\tilde t -s)^2+(j-k)^2)^{\frac{1}{4}}} ds\\
  &\leq C_1  \int_{0}^{\tilde t} \sum_{k \leq -c (\tilde t -s)+cs}\frac{e^{-(\tilde t -s)+(j-k)\varsigma(\frac{1}{c})}}{\sqrt{2\pi}(1+4(\tilde t -s)^2+(j-k)^2)^{\frac{1}{4}}} ds\\
  &\leq C_1  \int_{0}^{\tilde t} \sum_{k \leq min\{-c (\tilde t -s)+cs,0\}}\frac{e^{-(\tilde t -s)+(j-k)\varsigma(\frac{1}{c})}}{\sqrt{2\pi}(1+4(\tilde t -s)^2+(j-k)^2)^{\frac{1}{4}}} ds\\
  & \quad\quad + C_1  \int_{0}^{\tilde t} \sum_{0 < k \leq |-c (\tilde t -s)+cs|}\frac{e^{(-1+2c \varsigma(\frac{1}{c}))(\tilde t -s)}}{\sqrt{2\pi}(1+4(\tilde t -s)^2+(j-k)^2)^{\frac{1}{4}}} ds\\
    &\leq C_1  \int_{0}^{\tilde t} \sum_{k \leq 0}\frac{e^{-(\tilde t -s)+(j-k)\varsigma(\frac{1}{c})}}{\sqrt{2\pi}(1+4(\tilde t -s)^2+(j-k)^2)^{\frac{1}{4}}} ds+ C_1  \int_{0}^{\tilde t} \sum_{0 < k \leq c (\tilde t +2s)} 1 ds\\
&\leq C_1   \int_{0}^{\tilde t}\big ( \sum_{k \leq 0}e^{(-k)\varsigma(\frac{1}{c})} +  c (\tilde t +2s) \big)ds, \text{by } -(\tilde t -s)+j \varsigma(\frac{1}{c})<0 \\
&= C_1 \int_{0}^{\tilde t}\big ( \frac{e^{\varsigma(\frac{1}{c})}}{1-e^{\varsigma(\frac{1}{c})}}+  c (\tilde t +2s) \big)ds\\
& \leq P_{1}(\tilde t),
\end{align*}
where $C_1=CM$ and $\ds P_{1}(\tilde t)= C_1(2 c \tilde t^2+(\frac{e^{\varsigma(\frac{1}{c})}}{1-e^{\varsigma(\frac{1}{c})}} )\tilde t)$ that is a quadratic equation.

Let $\ds-\sigma=-1+2 \frac{\varsigma(z_1)}{z_1}=\max_{-c (\tilde t -s)+cs<k \leq cs} -1+2 \frac{\varsigma(z)}{z}$ with $z=\frac{2 (\tilde t -s)}{j-k}$ and $z_1=\frac{2}{c}$. We remark that  $c \geq c^* \approx 2.073$ and $z_1  \leq \frac{2}{c^*} \approx 0.9648$. Thus, $$\ds\sigma=1-2 \frac{\varsigma(z_1)}{z_1} \geq 1-2 \frac{\varsigma(0.9648)}{0.9648} \approx 0.0355793>0.$$ Then, for $B_{2}$ we have that
\begin{align*}
B_{2}&=C \int_{0}^{\tilde t} e^{\tilde t-s}(\sum_{-c (\tilde t -s)+cs < k \leq cs} F(2(\tilde t-s),j-k) g_{k}(s) )ds\\
 &\leq C  M \int_{0}^{\tilde t} e^{\tilde t-s}\big (\sum_{-c (\tilde t -s)+cs < k \leq cs} F(2(\tilde t-s),j-k) \big )ds\\
 &=C_1  \int_{0}^{\tilde t} \sum_{-c (\tilde t -s)+cs < K \leq cs}\frac{e^{-(\tilde t -s)+(j-k)\varsigma(\frac{2(\tilde t -s)}{j-k})}}{\sqrt{2\pi}(1+4(\tilde t -s)^2+(j-k)^2)^{\frac{1}{4}}} ds\\
 &\leq C_1  \int_{0}^{\tilde t} \sum_{-c (\tilde t -s)+cs < k \leq cs}\frac{e^{(-1+c \varsigma(\frac{2}{c}))(\tilde t -s)}}{\sqrt{2\pi}(1+4(\tilde t -s)^2+(j-k)^2)^{\frac{1}{4}}} ds\\
 &\leq C_1  \int_{0}^{\tilde t} \sum_{-c (\tilde t -s)+cs < k \leq cs}\frac{e^{-\sigma(\tilde t -s)}}{\sqrt{2\pi}(1+4(\tilde t -s)^2+(j-k)^2)^{\frac{1}{4}}} ds\\
  &\leq C_1  \int_{0}^{\tilde t} \sum_{-c (\tilde t -s)+cs < k \leq cs} 1 ds\\
  &= C_1  \int_{0}^{\tilde t} c (\tilde t -s)  ds\\
& \leq P_{2}(\tilde t),
\end{align*}
where $C_1=CM$ and $P_{2}(\tilde t)= C_1 (\frac{c}{2}\tilde t^2)$ that is a quadratic equation.

 Finally, with \eqref{gk_ubound}, for $B_{3}$ we have that 
\begin{align*}
B_{3}&=C \int_{0}^{\tilde t} e^{\tilde t-s}(\sum_{ k > cs} F(2(\tilde t-s),j-k) g_{k}(s) )ds\\
 &\leq \delta C \int_{0}^{\tilde t} e^{\tilde t-s}(\sum_{ k > cs} F(2(\tilde t-s),j-k) u_{k}(s) )ds\\
 &\leq \delta C \int_{0}^{\tilde t} e^{\tilde t-s}(\sum_{k} F(2(\tilde t-s),j-k) u_{k}(s) )ds.
\end{align*}

Note that 
\begin{align*}
A- B_{3}&\geq (S(t-T)u(T))_j-\delta(\int_{T}^{t}S(t-s)u(s)ds)_j\\
&=e^{-\delta \tilde t}(S(t-T)u(T))_j \\
&=e^{-\delta \tilde t}A(t) \\
&\geq e^{(\tilde \epsilon-\delta) \tilde t},
\end{align*} which is an exponential equation. On the other hand, $B_{1}+B_{2} \leq P_{1}(\tilde t)+P_{2}(\tilde t)$, which is a quadratic equation.
Thus, $u_j(t) \to \infty$ as $t \to \infty$ which contradicts that $u_j(t)$ is bounded.
\end{proof}

\begin{proposition}
\label{main-lm4}
Let $u_{j}(t)$ be a transition front of \eqref{main-eq} with speed c larger than $c^*$. Then for any $\epsilon>0$, there exists a $\tilde{K}_{\epsilon}>0$ and $T>0$ such that  $$ u_{k}(t_{j}) \geq \tilde{K}_{\epsilon} e^{-(\mu+\epsilon)(k-j)},$$ for $t_{j}>T$ and $k\geq j$ with $j,t_j$ as in Definition  \ref{meanspeed-def} of Mean Wave Speed.
\end{proposition}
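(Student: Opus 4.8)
The plan is to argue by contradiction, in close parallel with the proof of Proposition~\ref{Main-lm-up}, now invoking the \emph{lower} discrete heat-kernel bound in \eqref{heatkernel} together with a lower bound for $u$ behind the leading edge. Suppose the conclusion fails; then, just as in Proposition~\ref{Main-lm-up}, there are $\epsilon>0$ and sequences $j_n$, $t_{j_n}\to\infty$, $k_n$ with $x_n:=k_n-j_n\to\infty$ and $u_{k_n}(t_{j_n})\le \tfrac1n\,e^{-(\mu+\epsilon)x_n}$. The case of bounded $x_n$ is impossible: by Definition~\ref{meanspeed-def} the interface at time $t_{j_n}-\tau$ sits near $j_n-c\tau$, so $u$ is $\ge\tfrac12\min_j u_j^\ast$ there, and propagating this forward by time $\tau$ with the comparison principle (Proposition~\ref{monotonicity-new-prop1}) gives $u_{j_n+x}(t_{j_n})\ge\varrho(x)>0$ uniformly in $n$ for every fixed $x\ge0$, contradicting $u_{k_n}(t_{j_n})\to 0$ (recall $u_{j_n}(t_{j_n})=\tfrac12\min_j u_j^\ast$).

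The key new ingredient is a quantitative, $n$-uniform lower bound for $u$ well behind the front: using Definition~\ref{meanspeed-def}, the comparison principle applied to the compactly supported sub-solution $\tilde u=\delta\phi_M$ of Lemma~\ref{Truncated-sub}, and the uniqueness of the positive stationary state $\{u_j^\ast\}$ (so that the solution issued from $\tilde u$ tends to $u^\ast$ as $t\to\infty$), one shows that for each $a>0$ there exist $R_a, T_a>0$ with $u_l(s)\ge\tfrac12\min_j u_j^\ast$ whenever $s>T_a$ and $l\le j_n+c(s-t_{j_n})-R_a$. Granting this, write $T=t_{j_n}$ and, as in Proposition~\ref{Main-lm-up}, use the representation \eqref{nonlinear-sol} to express $u_k(t)=A(t)-B(t)$ at a pair $(t,k)$ chosen far ahead of, and later than, $(T,k_n)$. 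I would bound $A(t)$ from \emph{below} by the lower bound in \eqref{heatkernel}, retaining only the sites $l$ behind the interface where $u_l(T)\ge\tfrac12\min_j u_j^\ast$, and bound $B(t)$ from \emph{above} through the same three-region splitting $B_1+B_2+B_3$ as in Proposition~\ref{Main-lm-up}: on the tail $g_k=(1-f_k(u_k))u_k\le\delta u_k$, which gives $A-B_3\ge e^{-\delta\tilde t}A$, while the bulk and transition-layer pieces are only polynomial in $\tilde t$ because $c>c^\ast>2$ forces the relevant distances beyond the diffusive range, where the kernel decays super-exponentially (this uses the sign and monotonicity statements of Lemma~\ref{Au-lm4}(1),(5), e.g. $\varsigma(1/c)<0$). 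Choosing the free time parameter so as to exploit the maximizer $z_0=csch(\mu)$ of $g$ from Lemma~\ref{Au-lm4}(2) — with $g(z_0)=\lambda(\mu)$ and $c=\lambda(\mu)/\mu$ — the exponential factor coming from $A(t)$ cancels against $e^{(\mu+\epsilon)x_n}$ up to a sub-exponential prefactor; combined with the assumed smallness $u_{k_n}(T)\le\tfrac1n e^{-(\mu+\epsilon)x_n}$ this forces the level set $\{\,l: u_l(t)\ge\tfrac12\min_j u_j^\ast\,\}$ to lag the line $j=j_n+c(t-T)$ by an amount $\to\infty$ as $t\to\infty$, i.e. the mean wave speed of $u$ would be strictly below $c$, contradicting Definition~\ref{meanspeed-def}. (Alternatively, as in Proposition~\ref{Main-lm-up}, the contradiction can be phrased as a violation of the boundedness of $u$ at the far point.)

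The main obstacle is the same as in Proposition~\ref{Main-lm-up}: the bookkeeping of the heat-kernel estimates — splitting $B(t)$ so that only polynomially growing terms survive, and choosing the free time parameter via Lemma~\ref{Au-lm4} so that the surviving exponent in $A(t)$ is exactly $(\mu+\epsilon)x_n$ and not something larger. A secondary technical point is making the ``lower bound behind the front'' uniform in $n$, which is precisely why the mean-speed normalization of Definition~\ref{meanspeed-def} is needed; the finitely many inhomogeneous sites $|l|\le N$ contribute only a bounded, hence negligible, correction, just as for $B_1$ in Proposition~\ref{Main-lm-up}.
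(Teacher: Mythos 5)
Your high-level setup (contradiction, Duhamel formula \eqref{nonlinear-sol}, heat-kernel estimate \eqref{heatkernel}, the uniform positive lower bound for $u$ behind the interface, and Lemma \ref{Au-lm4}) matches the paper, but the core of your estimate runs in the wrong direction and so does not produce a contradiction. The paper bounds the linear term $A(t)$ from \emph{above} and the absorbed term $B(t)$ from \emph{below}: evaluating at the front position $j=ct$, it splits $A=A_1+\cdots+A_7$, uses Proposition \ref{Main-lm-up} on the pieces away from $k_n$, uses the assumed smallness on the piece near $k_n$, and shows every $A_i\le Ce^{-\epsilon_i t}$, while the positivity of $u$ behind the front gives $g_k(s)\ge\hat l>0$ and hence $B(t)\ge C_B\sqrt{t}\,e^{-\hat\sigma_1 t}$ with $\hat\sigma_1$ arbitrarily small; the contradiction is $u_j(t)=A(t)-B(t)<0$, violating non-negativity. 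You propose instead to bound $A$ from \emph{below} (by the mass behind the interface) and $B$ from \emph{above}; that combination only yields a \emph{lower} bound on $u=A-B$, from which one cannot conclude that the level set lags the line $j_n+c(t-T)$, nor (as in your parenthetical alternative) that $u$ becomes unbounded --- unboundedness is the mechanism of Proposition \ref{Main-lm-up}, where the contradiction hypothesis is that $u_{k_n}$ is too \emph{large}. Worse, a direct lower bound on $A$ from the mass behind the front cannot reach the exponent $\mu+\epsilon$ anyway: by Lemma \ref{Au-lm4}(2),(4) the saddle point $z_0=csch(\mu)$ corresponds to a distance $(2/z_0)\tilde t<c\tilde t$ from the source, so at points a distance $c\tilde t+x$ ahead one only gets $e^{(g(z)-\mu c)\tilde t-\mu x}$ with $g(z)<\mu c$ bounded away from $\mu c$, i.e.\ a strictly worse exponential rate.

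A second, independent gap: you never use the parabolic Harnack inequality to spread the assumed smallness at the single site $k_n$ to a spatial interval of length proportional to $\epsilon k_n$ (the paper's step \eqref{Harnack-assumption}). This step is essential: one lattice site contributes a negligible amount to the kernel sum $\sum_k h^{\ZZ}_{2t}(j-k)u_k$, so without Harnack the contradiction hypothesis has no quantitative effect on any of the estimates. Your uniform lower bound behind the front and your treatment of $B_3$ via $g_k\le\delta u_k$ are correct ingredients, but the argument as described would not close.
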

\begin{proof}
We prove this lemma by contradiction. Assume the proposition to be false. Then for given $\epsilon$, there exist sequences $t_{j_n} \in \RR^+$, $k_n \in \ZZ^+$ and $j_n \in \ZZ^+$ such that $k_n \geq j_n$ and
\vspace{-.1in}\begin{equation}
\label{u-estimate}
u_{k_n}(t_{j_n}) \leq \tilde{K}_{\epsilon} e^{-(\mu+\epsilon)(k_n-j_n)}.
\vspace{-.1in}\end{equation}
 By applying Harnack inequality and shifting the origin of time and space, we can have a $q>0$ such that
\vspace{-.1in}\begin{equation}
\label{Harnack-assumption}
u_{k}(t_{j_n})\leq C e^{-(\mu+\frac{\epsilon}{2})(k_n-j_n)}, \forall k \in [(1-q\epsilon)k_n,(1+q\epsilon)k_n].
\vspace{-.1in}\end{equation}

Let $j= c t_{j_n}>N$, where $t_{j_n} \in \RR^+$ is chosen such that $j \in \ZZ^+$ and $N$ is as in (H2). For simplicity, we let $t=t_{j_n}$. We remark that $t$ is a sequence and $n \to \infty$ implies that $t \to \infty$. Recall \eqref{nonlinear-sol} that for $T = 0$,
\begin{align*}
u_j(t)&=e^{t}\sum_kh^{\ZZ}_{2t}(j-k)u_k(0)-\int_{0}^{t}e^{(t-s)}\sum_kh^{\ZZ}_{2(t-s)}(j-k)g_k(s)ds\\
&:=A(t)-B(t),
\end{align*}
where $g_j(t)=(1-f_j(u_j))u_j$, and $\begin{cases}
A(t)=e^{t}\ds\sum_kh^{\ZZ}_{2t}(j-k)u_k(0)\\
B(t)=\int_{0}^{t}e^{(t-s)}\ds\sum_k h^{\ZZ}_{2(t-s)}(j-k)g_k(s)ds.
\end{cases}$. 

We claim that $u_j(t)=A(t)-B(t)<0$ as $t \to \infty$, which causes a contradiction.

For any $\delta>0$, there exists a $l,j_{\delta}>0$, such that $u_k(s)\geq l$ for $N<k\leq (c-\delta)s-j_{\delta}$ and $s \geq 0$. Then for $N< k \leq (c-\delta)s-j_{\delta}$ and $s \geq 0$,
\vspace{-.1in}\begin{equation}
\label{g-lbound}
g_k(s)=(f_k(0)-f_k(u_k))u_k(s) \geq (1-\sup_{k}f_k(l))l:=\hat l.
\vspace{-.1in}\end{equation}

Thus, letting $\hat k = (c-\delta)s-j_{\delta}$ and $0< \sigma_2<\sigma_1<<1$, let 
\vspace{-.1in}\begin{equation*}
C(s,t)=-(t-s)+ |ct-\hat k|\varsigma(2(t-s)/|ct-\hat k|).
\vspace{-.1in}\end{equation*}
For $0< (1-\sigma_1)t \leq s \leq (1-\sigma_2)t$, choosing $\delta=\sigma_1$, we have
\vspace{-.1in}\begin{align*}
C(s,t)&=-(t-s)+ |ct-\hat k|\varsigma(2(t-s)/|ct-\hat k|)\\
&=-(t-s)+ (c(t-s)+\delta s+j_{\delta}) \varsigma(2(t-s)/(c(t-s)+\delta s+j_{\delta}))\\
& \geq - \sigma_1 t+  (c(t-s)+\delta s+j_{\delta})  \varsigma(\frac{2}{c+\delta (\frac{s}{t-s})+\frac{j_{\delta}}{t-s}})\\
& \geq - \sigma_1 t+  (c \sigma_2 t+\delta  (1-\sigma_1) t+j_{\delta})  \varsigma(\frac{2}{c+\delta (\frac{1-\sigma_2}{\sigma_1})+\frac{j_{\delta}}{\sigma_2 t}})\\
& = - \sigma_1 t+  (c \sigma_2 t+\sigma_1 (1-\sigma_1) t+j_{\delta})  \varsigma(\frac{2}{c+1-\sigma_2+\frac{j_{\delta}}{\sigma_2 t}})\\
& =( - \sigma_1+  (c \sigma_2 +\sigma_1 (1-\sigma_1))  \varsigma(\frac{2}{c+1-\sigma_2+\frac{j_{\delta}}{\sigma_2 t}})) t+j_{\delta}  \varsigma(\frac{2}{c+1-\sigma_2+\frac{j_{\delta}}{\sigma_2 t}}).
\vspace{-.1in}\end{align*}

For $t>\frac{j_{\delta}}{\sigma_2 ^2}$, we have
\vspace{-.1in}\begin{align}
\label{C-lbound}
\begin{split}
C(s,t)& \geq ( - \sigma_1+  (c \sigma_2 +\sigma_1 (1-\sigma_1))  \varsigma(\frac{2}{c+1-\sigma_2+\frac{j_{\delta}}{\sigma_2 t}})) t+j_{\delta}  \varsigma(\frac{2}{c+1-\sigma_2+\frac{j_{\delta}}{\sigma_2 t}})\\
& \geq ( - \sigma_1+  (c \sigma_2 +\sigma_1 (1-\sigma_1))  \varsigma(\frac{2}{c+1})) t+j_{\delta}  \varsigma(\frac{2}{c+1})\\
&:=-  \hat\sigma_1 t- \hat\sigma_2.
\end{split}
\vspace{-.1in}\end{align}
We remark that as $ \sigma_1 \to 0$, $\hat\sigma_1 \to 0$, that is, $\hat\sigma_1$ can be chosen as small as required by choosing small enough $\sigma_{1,2}$. 
Let 
\begin{align*}
\tilde C_B:&=\ds\lim_{t \to \infty}(C \hat{l} \frac{(\sigma_1-\sigma_2)\sqrt{t}}{(1+4 \sigma_1^2t^2+((\delta -\sigma_1(c-\delta))t+j_{\delta})^2)^{\frac{1}{4}}})\\
&=C \hat{l} \frac{(\sigma_1-\sigma_2)}{(1+4 \sigma_1^2+((\delta -\sigma_1(c-\delta)))^2)^{\frac{1}{4}}}
\end{align*} and $C_B=\tilde C_B/2$. Then, there exists a $T_B$ such that for $t>T_B$, 
\begin{equation*}
C \hat{l} \frac{(\sigma_1-\sigma_2)\sqrt{t}}{(1+4 \sigma_1^2t^2+((\delta -\sigma_1(c-\delta))t+j_{\delta})^2)^{\frac{1}{4}}} \geq C_B.
\end{equation*}
Therefore, with \eqref{heatkernel}, \eqref{g-lbound} and \eqref{C-lbound}, for $t>\max\{\frac{j_{\delta}}{\sigma_2 ^2},T_B\}$, we have
\begin{align*}
B(t)&=\int_0^{t}e^{t-s}\sum_{k} h_{2(t-s)}^{\ZZ}(j-k) g_{k}(s)ds\\
&\geq C \int_0^{t}e^{t-s}\sum_{k} F(2(t-s), j-k)g_{k}(s)ds\\
&\geq C \int_0^{t}e^{t-s}F(2(t-s),j-\hat k)g_{\hat k}(s)ds\\
&\geq C \hat{l} \int_0^{t}\frac{exp[-(t-s)+ |j-\hat k|\varsigma(2(t-s)/|j-\hat k|)]}{(1+4(t-s)^2+|j-\hat k|^2)^{\frac{1}{4}}} ds\\
&\geq C \hat{l} \int_{(1-\sigma_1)t}^{(1-\sigma_2)t}\frac{exp[-(t-s)+ |j-\hat k|\varsigma(2(t-s)/|j-\hat k|)]}{(1+4(t-s)^2+|j-\hat k|^2)^{\frac{1}{4}}} ds\\
&\geq C \hat{l}  e^{-\hat \sigma_1 t-\hat\sigma_2} \int_{(1-\sigma_1)t}^{(1-\sigma_2)t}\frac{1}{(1+4(t-s)^2+|j-\hat k|^2)^{\frac{1}{4}}} ds\\
&\geq C \hat{l}  e^{-\hat \sigma_1 t-\hat\sigma_2} \int_{(1-\sigma_1)t}^{(1-\sigma_2)t}\frac{1}{(1+4 \sigma_1^2t^2+((\delta -\sigma_1(c-\delta))t+j_{\delta})^2)^{\frac{1}{4}}} ds\\
&= C \hat{l}  e^{-\hat \sigma_1 t-\hat\sigma_2} \frac{(\sigma_1-\sigma_2)t}{(1+4 \sigma_1^2t^2+((\delta -\sigma_1(c-\delta))t+j_{\delta})^2)^{\frac{1}{4}}}\\
&= (C \hat{l} \frac{(\sigma_1-\sigma_2)\sqrt{t}}{(1+4 \sigma_1^2t^2+((\delta -\sigma_1(c-\delta))t+j_{\delta})^2)^{\frac{1}{4}}})\sqrt{t} e^{-\hat \sigma_1 t-\hat\sigma_2}\\
&\geq C_B \sqrt{t} e^{-\hat \sigma_1 t-\hat\sigma_2}.
\end{align*}
On the other hand, we have that
\begin{align*}
A(t)&= e^{t}\sum_{k} h_{2t}^{\ZZ}(j-k) u_{k}(0)\\
&\leq C_1 e^{t}\sum_{k} F(2t,j-k) u_{k}(0)\\
&= C_1 e^{t}[(\sum_{k \leq -ct} +\sum_{-ct<k \leq 0} +\sum_{1 \leq k  \leq (1-q\epsilon) k_n} + \sum_{(1-q\epsilon) k_n < k \leq (1+ q\epsilon) k_n}\\
&\quad \quad \quad \quad +\sum_{(1+ q\epsilon) k_n <  k \leq j-1} + \sum_{k = j} +\sum_{k >j}) F(2t,j-k) u_{k}(0)]\\
&:=A_1+A_2+A_3+A_4+A_5+A_6+A_7.
\end{align*}

For $k\leq -c t$, we have that $\varsigma(\frac{2t}{j-k}) \leq \varsigma(\frac{1}{c})<0$. Then for $A_1$ and t large, we have
\begin{align*}
 A_{1}:&= C_1 e^{t}\sum_{k \leq -c t} F(2t,j-k) u_{k}(0)\\
 &=C_1 \sum_{k \leq -c t}\frac{e^{-t+(j-k)\varsigma(\frac{2t}{j-k})}}{\sqrt{2\pi}(1+4t^2+(j-k)^2)^{\frac{1}{4}}} u_{k}(0)\\
&\leq C_1  \frac{e^{-t}}{\sqrt{2\pi}(1+4t^2+j^2)^{\frac{1}{4}}} \sum_{k \leq 0}e^{(j-k)\varsigma(\frac{1}{c})} u_{k}(0)\\
&\leq C_1 \sup_{k\leq 0} \{u_{k}^{*}\} \frac{e^{-t}}{\sqrt{2\pi}(1+4t^2+j^2)^{\frac{1}{4}}} \sum_{k \leq 0}e^{(-k)\varsigma(\frac{1}{c})} \\
& = \tilde{C}_1 \frac{e^{-t}}{\sqrt{2\pi}(1+(4+c^2)t^2)^{\frac{1}{4}}}\\
& \leq \tilde{C}_1 e^{-t/2},
\end{align*}
where $\ds\tilde{C}_1 = C_1 \frac{e^{\varsigma(\frac{1}{c})}}{1-e^{\varsigma(\frac{1}{c})}}  \sup_{k\leq 0} \{u_{k}^{*}\}$.

Let $\ds-\sigma=-1+2 \frac{\varsigma(z_1)}{z_1}=\max_{-c t<k \leq 0} -1+2 \frac{\varsigma(z)}{z}$ with $z=\frac{2t}{j-k}$ and $z_1=\frac{2}{c}$. We remark that  $c \geq c^* \approx 2.073$ and $z_1  \leq \frac{2}{c^*} \approx 0.9648$. Thus, $$\ds\sigma=1-2 \frac{\varsigma(z_1)}{z_1} \geq 1-2 \frac{\varsigma(0.9648)}{0.9648} \approx 0.0355793>0.$$
Then, for $A_2$ and large $t$, with $z=\frac{2t}{j-k}$ and $\sigma$ above we have
\begin{align*}
 A_{2}:&= C_1 e^{t}\sum_{-c t<k \leq 0} F(2t,j-k) u_{k}(0)\\
 &=C_1 \sum_{-c t<k \leq 0}\frac{e^{-t+(j-k)\varsigma(\frac{2t}{j-k})}}{\sqrt{2\pi}(1+4t^2+(j-k)^2)^{\frac{1}{4}}} u_{k}(0)\\
  &=C_1 \sum_{-c t<k \leq 0}\frac{e^{(-1+2 \frac{\varsigma(z)}{z})t}}{\sqrt{2\pi}(1+4t^2+(j-k)^2)^{\frac{1}{4}}} u_{k}(0)\\
&\leq C_1  \frac{e^{-\sigma t}}{\sqrt{2\pi}(1+4t^2+j^2)^{\frac{1}{4}}} \sum_{-c t<k \leq 0}u_{k}(0)\\
&\leq C_1 \sup_{k\leq 0} \{u_{k}^{*}\} \frac{c t e^{-\sigma t} }{\sqrt{2\pi}(1+(4+c^2)t^2)^{\frac{1}{4}}}\\
 &\leq C_1 e^{-\frac{\sigma}{2} t}.
\end{align*}

For $A_3$, on $[1, (1-q\epsilon)k_n]$, $-1+|j-k|\varsigma(2t/|j-k|)$ obtains a maximum at $k=(1-q\epsilon) k_n$. Therefore, by Proposition \ref{Main-lm-up}, there exists a positive real $C_\delta$ such that  $u_k(0)\leq \frac{C_\delta}{C_1} e^{-(\mu-\delta)k}$ for $\delta>0$ and for $k_n=(c-\frac{2}{z_0})t$ and  $j=c t$, $j-k \geq j-(1-q\epsilon)k_n$ for $k \in [1,(1-q\epsilon)k_n]$. Then, letting $z=\frac{2t}{j-k}$ we have $z \leq \frac{2}{c-(1-q\epsilon)(c-\frac{2}{z_0})}=\frac{2}{\frac{2}{z_0}+q\epsilon(c-\frac{2}{z_0})}:=z_2<z_0$. 
By Lemma \ref{Au-lm4}(2), with $z_2 < z_0$ and $-1+2\frac{\varsigma(z_0)+\mu}{z_0}- \mu c=\lambda(\mu)- \mu c=0$, then we can let $$\tilde{\epsilon}_3= -(-1+2\frac{\varsigma(z_2)+\mu}{z_2}- \mu c)>0.$$ Thus, for $A_3$
\begin{align*}
A_3&=C_1e^{t}\sum_{1 \leq k \leq (1-q\epsilon) k_n} F(2t,j-k) u_{k}(0)\\
&\leq C_{\delta}\sum_{1 \leq k \leq (1-q\epsilon) k_n} F(2t,j-k) e^{-(\mu-\delta) k}\\
&= C_{\delta}e^{t}\sum_{1 \leq k \leq (1-q\epsilon) k_n} \frac{1}{\sqrt{2\pi}}\frac{exp[-t+(j-k)\varsigma(\frac{2t}{j-k})-(\mu-\delta)k]}{(1+4t^2+(j-k)^2)^{\frac{1}{4}}}\\
&\leq C_{\delta} \frac{1}{\sqrt{2\pi}}\frac{1}{(1+4t^2+(\frac{2}{z_0}t)^2)^{\frac{1}{4}}}\sum_{1 \leq k \leq (1-q\epsilon) k_n} exp[-t+(j-k)\varsigma(\frac{2t}{j-k})-(\mu-\delta)k]\\
&=C_{\delta} \frac{1}{\sqrt{2\pi}}\frac{1}{(1+4t^2+(\frac{2}{z_0}t)^2)^{\frac{1}{4}}}\sum_{1 \leq k \leq (1-q\epsilon) k_n} exp[(-1+2\frac{\varsigma(z)+\mu}{z})t-\mu j+\delta k]\\
&\leq C_{\delta} \frac{1}{\sqrt{2\pi}}\frac{1}{(1+4t^2+(\frac{2}{z_0} t)^2)^{\frac{1}{4}}}\sum_{1 \leq k \leq (1-q\epsilon) k_n} exp[(-1+2\frac{\varsigma(z_2)+\mu}{z_2})t-\mu j+\delta k]\\
&= C_{\delta} \frac{1}{\sqrt{2\pi}}\frac{1}{(1+4t^2+(\frac{2}{z_0} t)^2)^{\frac{1}{4}}} exp[(-1+2\frac{\varsigma(z_2)+\mu}{z_2}-\mu c) t]\sum_{1 \leq k \leq (1-q\epsilon) k_n} e^{\delta k}\\
&\leq C_{\delta} \frac{1}{\sqrt{2\pi}}\frac{1}{(1+4t^2+(\frac{2}{z_0} t)^2)^{\frac{1}{4}}} exp[(-1+2\frac{\varsigma(z_2)+\mu)}{z_2}-\mu c) t] (1-q\epsilon) k_n e^{\delta (1-q\epsilon) k_n}\\
&= C_{\delta} \frac{1}{\sqrt{2\pi}}\frac{ (1-q\epsilon)(c-\frac{2}{z_0})t}{(1+4t^2+(\frac{2}{z_0} t)^2)^{\frac{1}{4}}} exp[(-\tilde{\epsilon}_3+\delta(1-q\epsilon)(c-\frac{2}{z_0})) t]\\
&\leq C_{\delta} e^{-\epsilon_3 t},
\end{align*}
where $\epsilon_3 = \tilde{\epsilon}_3/2$ and choose $\delta$ such that $\delta(1-q\epsilon)(c-\frac{2}{z_0})<\tilde{\epsilon}_3/2$.

For $A_4$, with $k \in ((1-q\epsilon)k_n,(1+q\epsilon)k_n]$, recalling that \eqref{Harnack-assumption}, $u_{k_n}(0)\leq e^{-(\mu+\epsilon)k_n}$,
\begin{align*}
A_4&=C_1e^{t}\sum_{(1-q\epsilon) k_n < k \leq (1+ q\epsilon) k_n} F(2 t,j-k) u_{k}(0)\\
&=C_1\sum_{(1-q\epsilon) k_n < k \leq (1+ q\epsilon) k_n}\frac{1}{\sqrt{2\pi}}\frac{exp[-t+|j-k|\varsigma(2t/|j-k|)]}{(1+4t^2+(j-k)^2)^{\frac{1}{4}}}u_{k}(0)\\
&\leq C_1\sum_{(1-q\epsilon) k_n < k \leq (1+ q\epsilon) k_n} \frac{1}{\sqrt{2\pi}}\frac{exp[-t+|j-k|\varsigma(2t/|j-k|)]}{(1+4t^2+(j-k)^2)^{\frac{1}{4}}}e^{-(\mu+\epsilon)k_n}\\
&= C_1\sum_{(1-q\epsilon) k_n < k \leq (1+ q\epsilon) k_n} \frac{1}{\sqrt{2\pi}}\frac{exp[-t+|j-k|\varsigma(2t/|j-k|)-(\mu+\epsilon)k_n]}{(1+4t^2+(j-k)^2)^{\frac{1}{4}}}\\
&\leq C_1 \sum_{(1-q\epsilon) k_n < k \leq (1+ q\epsilon) k_n} exp[-t+|j-k|\varsigma(2t/|j-k|)-(\mu+\epsilon)k_n]\\
&= C_1 e^{-\epsilon k_n}\sum_{(1-q\epsilon) k_n < k \leq (1+ q\epsilon) k_n} exp[(-1+2\frac{\varsigma(z_2)+\mu}{z_2}-\mu c)t+\mu(k-k_n)]\\
&\leq C_1 e^{-\epsilon k_n}\sum_{(1-q\epsilon) k_n < k \leq (1+ q\epsilon) k_n} exp[(-1+2\frac{\varsigma(z_2)+\mu}{z_2}-\mu c)t+\mu q\epsilon k_n)]\\
&\leq C_1 e^{(-1+\mu q)\epsilon k_n}\sum_{(1-q\epsilon) k_n < k \leq (1+ q\epsilon) k_n} exp[(-1+2\frac{\varsigma(z_2)+\mu}{z_2}-\mu c)t)]\\
&\leq C_1e^{-(1-\mu q)\epsilon (c-\frac{2}{z_0})t}\\
&=C_1e^{-\epsilon_4t},
\end{align*}
where $\epsilon_4=(1-\mu q)\epsilon (c-\frac{2}{z_0})$ and choose $q<1/\mu$.

For $A_5$  with $k \in (1+q\epsilon)k_n, j)$, by Proposition \ref{Main-lm-up}, there exists a positive real $C_\delta$ such that  $u_k(0)\leq \frac{C_\delta}{C_1} e^{-(\mu-\delta)k}$ for $\delta>0$. Recall that $k_n=(c-\frac{2}{z_0})t$, $j=c t$, $j-k >0$ for $k \in ((1+q\epsilon)k_n, j)$. Then, letting $z=\frac{2t}{j-k}$, $z_3:=\frac{2}{c-(1+q\epsilon)(c-\frac{2}{z_0})}=\frac{2}{\frac{2}{z_0}-q\epsilon(c-\frac{2}{z_0})}>z_0$ for $\epsilon$ small and we have $z \geq z_3$ for $k \in ((1+q\epsilon)k_n, j)$. Let $\tilde{\epsilon}_5= -(-1+2\frac{\varsigma(z_3)+\mu}{z_3}- (\mu-\delta) c)$. Choose $\delta$ such that $\tilde{\epsilon}_5>0$, which can be done because of Lemma \ref{Au-lm4}(2) with $z_3 > z_0$ and $-1+2\frac{\varsigma(z_0)+\mu}{z_0}- \mu c=\lambda(\mu)- \mu c=0$. Then, letting $z=\frac{2t}{j-k}$ we have  
\begin{align*}
A_5&=C_1e^{t}\sum_{(1+q\epsilon)k_n < k < j} F(2t,j-k) u_{k}(0)\\
&\leq C_{\delta}\sum_{(1+q\epsilon) k_n< k < j} F(2t,j-k) e^{-(\mu-\delta) k}\\
&= C_{\delta}e^{t}\sum_{(1+q\epsilon) k_n< k < j} \frac{1}{\sqrt{2\pi}}\frac{exp[-t+(j-k)\varsigma(\frac{2t}{j-k})-(\mu-\delta)k]}{(1+4t^2+(j-k)^2)^{\frac{1}{4}}}\\
&\leq C_{\delta}\sum_{(1+q\epsilon) k_n< k < j} exp[-t+(j-k)\varsigma(\frac{2t}{j-k})-(\mu-\delta)k]\\
&=C_{\delta}\sum_{(1+q\epsilon) k_n< k < j} exp[(-1+2\frac{\varsigma(z)+\mu}{z})t-\mu j+\delta k]\\
&\leq C_{\delta} \sum_{(1+q\epsilon) k_n< k < j} exp[(-1+2\frac{\varsigma(z_3)+\mu}{z_3})t-\mu j+\delta k]\\
&\leq C_{\delta}\sum_{(1+q\epsilon) k_n< k < j} exp[((-1+2\frac{\varsigma(z_3)+\mu}{z_3})-(\mu-\delta)c)t]\\
&\leq C_{\delta} c t e^{-\tilde{\epsilon}_5 t}\\
&\leq C_{\delta} e^{-\epsilon_5 t},
\end{align*}
where $\epsilon_5=\tilde{\epsilon}_5/2$. 

By Proposition \ref{Main-lm-up}, there exists a positive real $C_\delta$ such that  $u_j(0)\leq \frac{C_\delta}{C_1} e^{-(\mu-\delta)j}$ for $\delta>0$. Let $\tilde{\epsilon}_6=-(1-(\mu-\delta)c)=\lambda(\mu)-1-\delta c$. $\delta$ can be chosen such that  $\tilde{\epsilon}_6>0$ since $\lambda(\mu)>1$ for $\mu>0$. For $A_6$, we have
\begin{align*}
A_6&= C_1 e^{t}\frac{1}{\sqrt{2\pi}}\frac{1}{(1+t^2)^{\frac{1}{4}}} u_{j}(0)\\
& \leq C_\delta e^{t}\frac{1}{\sqrt{2\pi}}\frac{1}{(1+t^2)^{\frac{1}{4}}} e^{-(\mu-\delta)j}\\
& \leq C_\delta \frac{1}{\sqrt{2\pi}}\frac{1}{(1+t^2)^{\frac{1}{4}}} e^{(1-(\mu-\delta)c)t}\\
&=  C_\delta \frac{1}{\sqrt{2\pi}}\frac{1}{(1+t^2)^{\frac{1}{4}}} e^{-\tilde{\epsilon}_6t}\\
&\leq C_{\delta} e^{-\epsilon_6 t},
\end{align*}
where $\epsilon_6=\tilde{\epsilon}_6/2$. 
Note that $A_7 < A_1+A_2+A_3+A_4+A_5$ for $u_{k}(0)$ decreasing in $k$ (If $u_{k}(0)\equiv const$, $A_7 = A_1+A_2+A_3+A_4+A_5$ by symmetry).

Finally, we have that 
\begin{align*}
u_j(t)&=A(t)-B(t)\\
& \leq 2( \tilde C_{1} e^{-\frac{t}{2}} +C_{1} e^{-\frac{\sigma}{2}t} +C_{\delta} e^{-\epsilon_3 t}+C_1 e^{-\epsilon_4 t}+C_{\delta} e^{-\epsilon_5 t}+C_{\delta} e^{-\epsilon_6 t})-C_B \sqrt{t} e^{-\hat \sigma_1 t-\hat\sigma_2}.
\end{align*}

As $\sigma_{1,2} \to 0$, $\hat \sigma_1 \to 0$. Thus, $\hat \sigma_1$ can be chosen such that $\hat \sigma_1<\min\{\frac{1}{2}, \frac{\sigma}{2}, \epsilon_3,\epsilon_4,\epsilon_5,\epsilon_6 \}$. Therefore, as $t$ is large enough, $A(t)-B(t)<0$, which causes a contradiction.
\end{proof}

\section{Nonexistence of Transition Fronts}
In this section, we shall investigate the conditions under which transition fronts do not exist. We shall prove part (2) of the main theorem (Theorem \ref{Existence and Nonexistence}) in the following proposition.
\begin{proposition}
\label{Nonexistence-prop}
Transition fronts do not exist under the following conditions:
\begin{itemize}
\item[(1)] $\lambda>\lambda^*$;
\item[(2)] for $\lambda \in [1,\lambda^*)$, either [i] $c<c^*$ or [ii] $c>\hat{c}$.
\end{itemize}
\end{proposition}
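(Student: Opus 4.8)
The plan is to reduce the three non-existence regimes to two mechanisms: a spreading-speed lower bound rules out $c<c^{*}$, while $\lambda>\lambda^{*}$ and $c>\hat c$ are both deduced from one ``forced growth at the inhomogeneity'' estimate, fed by the tail bounds of Propositions~\ref{Main-lm-up} and~\ref{main-lm4}.

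\emph{The case $c<c^{*}$.} Suppose a transition front $u$ of mean speed $c<c^{*}$ exists, and put $\theta=\tfrac12\min_{j}u_{j}^{*}$. Since $u_{j}(t)\to u_{j}^{*}$ as $j\to-\infty$, for a large time $T$ the set $\{j:u_{j}(T)\ge\theta\}$ contains an interval of length $2M+1$ sitting in the homogeneous region $\{j>N\}$; there $u(\cdot,T)$ dominates a compactly supported sub-solution $\tilde u$ as in Lemma~\ref{Truncated-sub} built from the (homogeneous) truncated principal eigenvector. By the comparison principle (Proposition~\ref{monotonicity-new-prop1}) and the (purely homogeneous, hence unperturbed) rightward spreading of $u(\cdot\,;\tilde u)$ at speed $c^{*}$, for every $\epsilon>0$ one has $u_{j}(t)\ge\theta$ for all $j\le N+M+(c^{*}-\epsilon)(t-T)$ once $t$ is large, whereas the definition of mean speed gives $u_{j}(t)<\theta$ for $j>ct+O(1)$. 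Choosing $\epsilon<c^{*}-c$ and $t$ large these are incompatible. Hence every transition front has mean speed $\ge c^{*}$, which settles (2)[i].

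\emph{The cases $\lambda>\lambda^{*}$ and $c>\hat c$.} Let $u$ be a transition front of speed $c$; by the previous paragraph $c\ge c^{*}$, and assume first $c>c^{*}$. By Propositions~\ref{Main-lm-up} and~\ref{main-lm4}, for every small $\epsilon'>0$ there are constants $c_{\epsilon'},C_{\epsilon'}>0$ with $c_{\epsilon'}e^{-(\mu+\epsilon')(j-ct_{0})}\le u_{j}(t_{0})\le C_{\epsilon'}e^{-(\mu-\epsilon')(j-ct_{0})}$ for $j-ct_{0}$ large, where $\mu\in(0,\mu^{*})$ is the root of $c=c(\mu)$ in that interval, i.e. $\mu c=\lambda(\mu)$. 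The key observation is $\mu c<\lambda$: if $\lambda>\lambda^{*}$ then $\mu c=\lambda(\mu)\le\lambda(\mu^{*})=\lambda^{*}<\lambda$ by monotonicity of $\lambda(\cdot)$ and $\mu\le\mu^{*}$; if $c>\hat c=c(\hat\mu)$ then, since $c(\cdot)$ is strictly decreasing on $(0,\mu^{*}]$ (Lemma~\ref{Au-lm4}(3)) and $\mu,\hat\mu\in(0,\mu^{*})$, we get $\mu<\hat\mu$, hence $\mu c=\lambda(\mu)<\lambda(\hat\mu)=\lambda$. Now fix $K$ so large that $\lambda_{K}>\mu c$ (possible since $\lambda_{K}\uparrow\lambda$ by Lemma~\ref{lambda_limit-lemma}), and then $\epsilon,\epsilon'>0$ so small that $(\mu+\epsilon')c<\lambda_{K}-\epsilon$; let $(\lambda_{K},\phi^{(K)})$ be the principal pair of $A_{K}$ with $\|\phi^{(K)}\|_{\infty}=1$. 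By the upper tail bound there is $R_{1}$ so that $u_{j}(s)\le\epsilon_{0}$ (the threshold below which $f_{j}(v)\ge a_{j}-\epsilon$) for all $j\ge cs+R_{1}$; hence for every $s\le T_{0}:=(-K-R_{1})/c$ the front is $\le\epsilon_{0}$ on $[-K,K]$, where (H1) gives $\dot u_{j}\ge u_{j+1}-2u_{j}+u_{j-1}+(a_{j}-\epsilon)u_{j}$, and since $u_{\pm(K+1)}\ge0$ the restriction $u|_{[-K,K]}$ is a super-solution on $[t_{1},T_{0}]$ of the linear Dirichlet problem $\dot v=(A_{K}-\epsilon)v$ for any $t_{1}<T_{0}$. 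The lower tail bound gives $\min_{|j|\le K}u_{j}(t_{1})\ge c_{\epsilon'}e^{-(\mu+\epsilon')(K-ct_{1})}=:\delta_{t_{1}}$, so $u_{j}(t_{1})\ge\delta_{t_{1}}\phi^{(K)}_{j}$; comparison then yields
\[
u_{0}(T_{0})\ \ge\ \delta_{t_{1}}\phi^{(K)}_{0}e^{(\lambda_{K}-\epsilon)(T_{0}-t_{1})}
\ =\ \Big(c_{\epsilon'}\phi^{(K)}_{0}e^{-(\mu+\epsilon')K-(\lambda_{K}-\epsilon)T_{0}}\Big)\,e^{\big((\mu+\epsilon')c-(\lambda_{K}-\epsilon)\big)t_{1}} .
\]
Since the exponent of $t_{1}$ is negative, the right side tends to $+\infty$ as $t_{1}\to-\infty$, contradicting $u_{0}(T_{0})\le u_{0}^{*}<\infty$. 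This rules out $c>c^{*}$ in both cases, hence all of (2)[ii] and all of part (1) except $c=c^{*}$; the borderline $c=c^{*}$ (which occurs only under $\lambda>\lambda^{*}$, where $\hat c>c^{*}$) is handled the same way once one knows a speed-$c^{*}$ front decays no faster than $e^{-(\mu^{*}+\epsilon')(j-c^{*}t)}$, since $\mu^{*}c^{*}=\lambda^{*}<\lambda$ keeps the strict inequality $\mu^{*}c^{*}<\lambda_{K}-\epsilon$.

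\emph{Main obstacle.} The real work is in the bookkeeping of the second argument: (i) making ``deep behind / far ahead of the interface'' quantitative and uniform enough in time that $T_{0}$ can be chosen independent of $t_{1}$, and that the lower tail bound at $t_{1}$ from Proposition~\ref{main-lm4} translates into the stated $\delta_{t_{1}}$ even if the interface wobbles; and (ii) justifying the truncated Dirichlet super-/sub-solution comparison on $[-K,K]$ (in particular that nonnegativity of $u_{\pm(K+1)}$ makes it licit). A secondary point is supplying the critical-speed tail estimate used for $c=c^{*}$, which is standard for KPP but is not established in the excerpt, and checking that the rightward spreading invoked in the first paragraph is genuinely unaffected by the perturbation because the reference sub-solution is placed in $\{j>N\}$.
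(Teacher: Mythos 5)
Your treatment of $c<c^{*}$ is essentially the paper's: Proposition \ref{lowerbound-lemma} contradicts $\lim_{j-ct\to\infty}u_j(t)=0$ with the spreading-speed lower bound, except that the paper simply cites the Kong--Shen results (Lemmas \ref{spreading-prop1}--\ref{spreading-prop2}) where you sketch a re-derivation; your parenthetical that the rightward spreading is ``purely homogeneous, hence unperturbed'' is not quite right when the $\theta$-level set never enters $\{j>N\}$ (the spreading then has to cross the perturbed block, which is exactly what the cited theorems handle), but this is minor. The genuine gap is in your unified argument for $\lambda>\lambda^{*}$ and $c>\hat c$. Your blow-up mechanism requires the lower tail bound $u_j(t_1)\ge c_{\epsilon'}e^{-(\mu+\epsilon')(j-ct_1)}$ with a constant uniform as $t_1\to-\infty$. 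Proposition \ref{main-lm4} provides this only for interface times $t_j>T$ with $T>0$: its proof runs the Duhamel representation forward from time $0$ and lets $t\to+\infty$, and its constants come out of a contradiction argument, hence depend on the front. Time-translating the entire solution to reach negative times produces a different (shifted) front with different constants, so no uniform $\delta_{t_1}$ is available and the quantity $\delta_{t_1}e^{(\lambda_K-\epsilon)(T_0-t_1)}$ cannot be shown to diverge. This is precisely why the paper builds separate backward-in-time machinery (Lemma \ref{Aux-comparison-lemma}, Lemma \ref{main-lm2}, Remark \ref{rk-main-lm2}, Lemma \ref{main-lm-prop}) instead of recycling the forward-time tail estimates: those lemmas establish, by a self-contained contradiction against the finite value $u_0(0)$, \emph{upper} bounds valid for all $t\le 0$.

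The two cases are then closed in the paper by contradictions different from yours. For $\lambda>\lambda^{*}$, Proposition \ref{main-lm} upgrades the backward bound to $u_j(t)\le K e^{-\mu^{*}(|j|-ct)}$ for all $t\le 0$ and all $j$ (half-line super-solutions plus the Harnack inequality), forcing $u_j(t)\to 0$ as $j\to-\infty$ and contradicting $u_j(t)-u_j^{*}\to 0$; note this kills every transition front without assuming a mean speed exists and has no difficulty at $c=c^{*}$ --- the borderline you concede you cannot handle, since the critical-speed lower tail estimate you would need is not established anywhere. For $c>\hat c$, the paper evaluates Lemma \ref{main-lm-prop} at $t=0$ and propagates it forward by comparison to get $u_j(t)\le K_\epsilon e^{-(\hat\mu-\epsilon)(j-ct)}$, which contradicts Proposition \ref{main-lm4} applied legitimately at large positive times, since $\mu+\epsilon<\hat\mu-\epsilon$. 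Your heuristic --- growth at rate near $\lambda$ beats decay at rate $\mu c=\lambda(\mu)<\lambda$ --- is the right one and your verification of $\mu c<\lambda$ in both regimes is correct, but to make the argument rigorous you would have to prove an analogue of Lemma \ref{main-lm2} (or extend Proposition \ref{main-lm4} uniformly to backward time), which is the substantive content of the paper's Section 4.
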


It is known that there is a minimal speed (spreading speed) $c^{*}$ such that transition fronts may exist, that is, transition fronts do not exist for $c<c^*$ (See Proposition \ref{lowerbound-lemma}). Thus, all $\mu$s of valid positive eigenvectors are located in $(0,\mu^*)$. However, if $\lambda \in (1,\lambda(\mu^*))$, due to lemma \ref{p_solution-lemma}, there are also no valid positive eigenvectors for $\mu \in (0,\hat{\mu})$. Proposition \ref{Nonexistence-prop} shows that there is a maximal speed $\hat{c}=\frac{\lambda}{\hat{\mu}}$ to prevent the existence of transition fronts, that is, transition fronts do not exist for $c> \hat{c}$ or $\mu<\hat{\mu}$. If $\lambda>\lambda(\mu^*)$, then $\hat{\mu}>\mu^*$ and there are none valid positive eigenvectors at all. The following Figure \ref{fig:existencefig} shows the existence intervals of transition fronts to \eqref{main-eq} with parameter values $(c,\mu) \in [c^*,\hat c] \times[\hat \mu,\mu^*]$. 
\begin{figure}[htp]
  \centering
  \includegraphics[width=100mm]{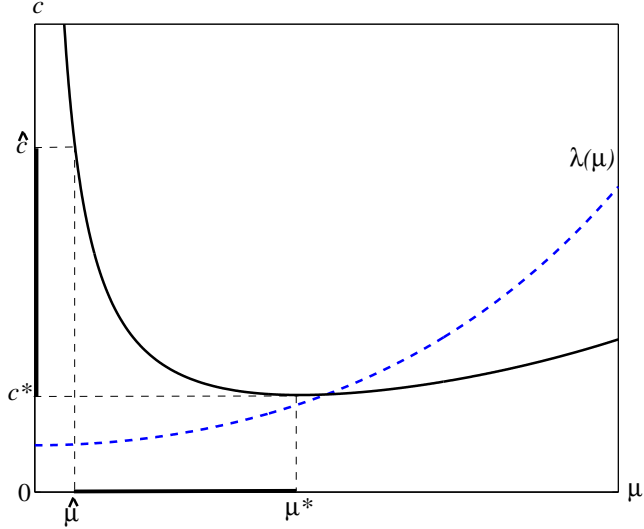}
        \caption{The solid curve is for speed auxiliary function $c(\mu)=\frac{\lambda(\mu)}{\mu}$. The dashed curve is for principal eigenvalue $\lambda(\mu)$. The parameter values of existence region of transition fronts are located on $[c^*,\hat{c}] \times [\hat{\mu},\mu^*]$, where $c^*=\ds\inf_{\mu>0} c(\mu)$ and $\hat{c}=c(\hat{\mu})$ with $\hat{\mu}$ satisfying $\lambda(\hat{\mu})=\lambda$.}
    \label{fig:existencefig}
 \end{figure}

From Figure \ref{fig:existencefig} and Propositions \ref{existence-prop1}, \ref{Main-lm-up} and \ref{main-lm4}, we have the following facts for transition fronts if they exist:
\begin{itemize}
\item[(1)] For any $\epsilon>0$, there exists a $T>0$ such that for $t>T$ and $j>ct$,$$C_1e^{-(\mu+\epsilon)(j-ct)} \leq u_j(t) \leq C_2e^{-(\mu-\epsilon)(j-ct)}\text{ (see Propositions \ref{Main-lm-up} - \ref{main-lm4}}).$$
\item[(2)] Due to the spreading properties of transition fronts, the lower bound of speed (minimal wave speed) is $c^*$, corresponding to the upper bound of $\mu$ (i.e. $\mu^*$). Then we must have $u_j(t) \geq K e^{-\mu^*(j-ct)}$ for $t$ large and some $K>0$.
\item[(3)] The upper bound of speed (maximal wave speed) is given by $\hat c$, corresponding to the lower bound of $\mu$ (i.e. $\hat\mu$). Thus, we must have $u_j(t) \leq K e^{-\hat{\mu}(j-ct)}$ for $t$ large and some $K>0$, that is controlled by the spectral bound $\lambda=\lambda(\hat{\mu})$.
\end{itemize}

We see that if $\lambda>\lambda^*$ and $\hat{\mu}>\mu^*$, this causes a contradiction of (2) and (3). If $c>\hat{c}$ and $\mu <\hat{\mu}$, this causes a contradiction of (1) and (3). 

\subsection{Spreading Speeds and the Lower Bound of Wave Speeds $c^*$} \label{lower bound section}
First, we shall show that $c^*$ is the lower bound of the speeds (minimal wave speed) in this subsection. For simplicity, we write $u_{j}(t)$ for $u_{j}(t;z)$ if no confusion occurs with the initial $z$. Define
\begin{equation}
\label{x-xi-space}
\hat{X}^+=\{v_{j}\geq 0\,|\,
 \liminf_{r\to -\infty}\inf_{j \leq r} v_{j}>0,
v_{j}=0\text{ for } j\in\ZZ\text{ with }j >N_0, \text{ for some } N_0>0\}.
\end{equation}

\begin{definition}
\label{spreading-speed-interval-def}
 A number $c^*$ is called the spreading speed of \eqref{main-eq} if for any $z \in \hat{X}^+$,
\begin{equation*}
\liminf_{j \leq ct, t\to
\infty} u_{j}(t)>0, \forall c<c^*,
\end{equation*}
and
\begin{equation*}
  \limsup_{j \geq ct, t\to \infty}u_{j}(t)=0,\forall c>c^*,
\end{equation*} where $u_{j}(0)=z_{j}$ is the initial condition.
\end{definition}

We remark that for homogenous and periodically heterogenous KPP-Fisher equations, the spreading speed exists. For \eqref{main-eq}, we have the following:
\begin{lemma}[See Theorem 2.2 in \cite{KoSh1,KoSh2}]
\label{spreading-prop1}
   The spreading speed of \eqref{main-eq} $c^*$ exists. Moreover, $c^*$ of \eqref{main-eq} with localized periodic inhomogeneity coincides with that of \eqref{main-eq} with corresponding periodic inhomogeneity.
\end{lemma}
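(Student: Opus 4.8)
The plan is to reduce the statement to Theorem 2.2 of \cite{KoSh1,KoSh2} rather than rebuild the spreading-speed machinery from scratch, and then record the two comparison arguments that make the reduction transparent. Under (H1)--(H2), equation \eqref{main-eq} sits exactly in the framework of those papers: it is a Fisher--KPP lattice equation whose reaction coefficient $f_j(0)$ is supported, as a perturbation, in $\{|j|\le N\}$, over the homogeneous value $1$ (the setting of \cite{KoSh1}) or, more generally, over a spatially periodic profile (the setting of \cite{KoSh2}). By Theorem 2.1 of \cite{KoSh1} the unique positive steady state $\{u_j^*\}$ exists and is asymptotically flat, $u_j^*\to 1$ as $j\to\pm\infty$ (its periodic analog in the setting of \cite{KoSh2}); hence the conclusion of Theorem 2.2 of \cite{KoSh1,KoSh2}, that the spreading speed exists and equals that of the corresponding periodic (or homogeneous) problem, transfers to \eqref{main-eq}. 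I would state this reduction first.

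For the lower bound I would show that for every $c<c^*$ and every $z\in\hat X^+$ one has $\liminf_{j\le ct,\,t\to\infty}u_j(t)>0$. Using the comparison principle (Proposition \ref{monotonicity-new-prop1}) and the fact that the localized perturbation only enhances the reaction rate near $0$, the solution $u$ dominates the solution of the unperturbed periodic equation with the same initial datum, which spreads at the periodic speed $c^*_{\mathrm{per}}$ by Kong--Shen. Equivalently and self-containedly, one fixes a small compactly supported subsolution of the form $\tilde u=\delta\phi_M$ as in Lemma \ref{Truncated-sub}, observes that $u_j(\tau)$ eventually dominates a translate of it, and propagates it: since the local dynamics converges to $u_j^*$ (uniqueness of the positive steady state), the region on which $u_j(t)$ is bounded away from $0$ invades at the periodic spreading speed.

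For the upper bound, for every $c>c^*$ I would establish $\limsup_{j\ge ct,\,t\to\infty}u_j(t)=0$. Outside $[-N,N]$ equation \eqref{main-eq} coincides with the periodic (or homogeneous) one, for which the linear supersolution $Ce^{-\mu(j-ct)}$ with $c(\mu)=c$ and $\mu\in(0,\mu^*)$ decays in the moving frame; on the finite perturbed block the a priori bound $u_j(t)\le u_j^*\le\sup_j u_j^*<\infty$ coming from the comparison principle controls the influx into $\{j\ge ct\}$, and splicing these two pieces via the representation \eqref{nonlinear-sol} and the heat-kernel estimate \eqref{heatkernel} yields the decay. The delicate point, and the one Kong--Shen resolve, is precisely this: the localized perturbation may push the principal eigenvalue $\lambda$ of $\Lambda$ strictly above $1$, so the \emph{linearized} equation spreads at the faster speed $\hat c=c(\hat\mu)>c^*$, and one must therefore use the nonlinear saturation $u_j\le u_j^*$ rather than merely $f_j(u_j)\le f_j(0)$ to recover the speed $c^*$. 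This linear-versus-nonlinear determinacy, together with the verification that the abstract monotone-semiflow spreading-speed theory applies to the discrete, merely asymptotically periodic medium, is the main obstacle, and both are exactly what Theorem 2.2 of \cite{KoSh1,KoSh2} supplies.
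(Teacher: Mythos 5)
Your proposal takes the same route as the paper, which offers no argument for this lemma beyond the citation itself: Lemma \ref{spreading-prop1} is stated as ``See Theorem 2.2 in \cite{KoSh1,KoSh2}'' and the existence of the spreading speed and its coincidence with the periodic/homogeneous value are imported wholesale from Kong--Shen, exactly as in your first paragraph. Your supplementary sketch of the lower and upper bounds is reasonable and not load-bearing, though the side remark that ``the localized perturbation only enhances the reaction rate near $0$'' is not guaranteed by (H2) (which only requires $f_j(0)>0$, not $f_j(0)\geq 1$, on $|j|\leq N$), so the one-sided domination by the unperturbed equation can fail and one must fall back on the compactly-supported-subsolution argument you give as the alternative.
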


\begin{lemma}[See Theorem 2.3 in \cite{KoSh1,KoSh2}]
\label{spreading-prop2}
   For each $\delta>0$, $r>0$, and $z \in X^+$ satisfying that $z_j \geq \delta$ for $|j| \leq r$,
   \begin{equation*}
\limsup_{|j| \leq ct, t\to
\infty} (u_{j}(t)-u_j^*)=0, \forall 0< c <c^*,
\end{equation*}
\end{lemma}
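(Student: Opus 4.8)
The plan is to prove the two one–sided bounds $\limsup_{|j|\le ct,\,t\to\infty}(u_j(t)-u_j^{*})\le 0$ and $\liminf_{|j|\le ct,\,t\to\infty}(u_j(t)-u_j^{*})\ge 0$ separately for every $c\in(0,c^{*})$; together they force $u_j(t)-u_j^{*}\to 0$ uniformly on the moving window $|j|\le ct$, which gives the stated $\limsup=0$. The upper bound is soft and uses only comparison. First, any positive steady state is bounded by $L_0$: at a maximizing index $j_0$ of $\{u_k^{*}\}$ the discrete Laplacian is $\le 0$, so $f_{j_0}(u_{j_0}^{*})u_{j_0}^{*}=-(u^{*}_{j_0+1}-2u^{*}_{j_0}+u^{*}_{j_0-1})\ge 0$, whence $f_{j_0}(u_{j_0}^{*})\ge 0$ and therefore $u_{j_0}^{*}\le L_0$ by (H1). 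Pick a constant $K>\max\{L_0,\|z\|_X\}$; since $f_j(K)<0$ for $K>L_0$, the spatially constant state $K$ is a super-solution of \eqref{main-eq}, so by Proposition \ref{monotonicity-new-prop1} the orbit $u(t;K)$ is nonincreasing in $t$ and, being $\ge u(t;u^{*})=u^{*}>0$, it converges to a positive steady state, which by uniqueness of $\{u_k^{*}\}$ (Theorem 2.1 of \cite{KoSh1}) is $u^{*}$ itself. Comparison gives $u_j(t;z)\le u_j(t;K)\downarrow u_j^{*}$; the convergence is uniform in $j$ because the medium is asymptotically homogeneous --- in any fixed window the $j$-shifted problem coincides with the homogeneous KPP lattice equation once the shift exceeds $N$, and its solution from $K$ is the spatially constant solution of $\dot v=f(v)v$, which decreases uniformly to the homogeneous steady state $\lim_{|k|\to\infty}u_k^{*}$ --- so a diagonal/compactness argument rules out an escaping maximizing sequence $j_n\to\pm\infty$. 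This yields the upper bound on all of $\ZZ$, hence on $|j|\le ct$.

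The lower bound is the substantive part. By the strong positivity in Proposition \ref{monotonicity-new-prop1}, $u_j(1;z)>0$ for every $j$. Choose $M$ so large that $\lambda_M>0$ (possible since $\lambda_M\uparrow\lambda\ge 1$ by Lemma \ref{lambda_limit-lemma}), then $\delta'\in(0,\inf_k u_k^{*})$ small enough that $\delta'\phi_M\le u(1;z)$ pointwise; by Lemma \ref{Truncated-sub} the compactly supported $\tilde u=\delta'\phi_M$ is a time-independent sub-solution with $\tilde u\le u^{*}$, so $u(t;\tilde u)$ increases in $t$, stays below $u^{*}$, and increases to the unique positive steady state $u^{*}$. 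Comparison gives $u_j(t+1;z)\ge u_j(t;\tilde u)\to u_j^{*}$: the \emph{hair-trigger} convergence to $u^{*}$, locally uniformly in $j$. To propagate this to the full window I would, following Kong--Shen, invoke the spreading machinery built on Lemma \ref{spreading-prop1}: the engine is that a configuration lying above $u^{*}-\eta$ on a long interval $[-L,L]$ evolves, in one fixed unit of time, to one lying above $u^{*}-\eta'$ on the enlarged interval $[-L-(c^{*}-\epsilon),L+(c^{*}-\epsilon)]$, which is proved by comparing with translates of eigenfunction-type sub-solutions together with the linearized dynamics and is exactly the content of Theorem 2.3 in \cite{KoSh1,KoSh2} (the confinement of the perturbation to $[-N,N]$ and the normalization $f_j(0)=1$ outside it make the two tails symmetric and reduce the edge speed to the homogeneous $c^{*}$, cf.\ Lemma \ref{spreading-prop1}). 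Iterating this enlargement from the plateau supplied by the hair-trigger, and noting that $\{j:|j|\le ct\}\subset\{j:|j|\le L_0+(c^{*}-\epsilon)n\}$ after $n\approx t$ steps, produces for each $c<c^{*}$ a time $T$ with $u_j(t)\ge u_j^{*}-\epsilon$ for $|j|\le ct$, $t\ge T$, i.e.\ the lower bound.

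The main obstacle is precisely this plateau-enlargement step: upgrading the \emph{local} hair-trigger convergence to an estimate valid \emph{simultaneously} on $|j|\le ct$ for every $c<c^{*}$, with limiting profile exactly $u^{*}$ and with a catch-up time that does not accumulate as the window grows. This is where one enters the Weinberger-type monotone-recursion framework underlying Theorem 2.3 of \cite{KoSh1,KoSh2}: one must check (i) that the spreading speed associated with $\hat X^{+}$-data in Lemma \ref{spreading-prop1} is the same constant that governs the edge of the inner plateau, and (ii) that the convergence to $u^{*}$ is uniform in the translation parameter of the plateau. The asymmetry of the medium is harmless --- the perturbation is confined to $[-N,N]$, on $|j|>N$ the linearization at $0$ is the homogeneous one, and the comparison principle lets one sandwich the dynamics on each side of $[-N,N]$ between homogeneous problems whose bump solutions spread at speed $c^{*}$ in both directions.
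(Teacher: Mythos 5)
The paper does not actually prove this lemma: it is imported verbatim as Theorem~2.3 of \cite{KoSh1,KoSh2}, so there is no internal argument to compare yours against. Judged as a self-contained proof, your proposal has a genuine gap, and it is exactly where you flag ``the main obstacle.'' Your upper bound (comparison with a large constant $K>L_0$, monotone decay of $u(t;K)$ to the unique positive steady state) and your hair-trigger step (strong positivity at $t=1$, then squeezing above $\delta'\phi_M$ and using Lemma~\ref{Truncated-sub} plus uniqueness of $u^*$ to get locally uniform convergence) are both sound and are indeed the standard opening moves. But the decisive step --- upgrading locally uniform convergence to convergence that holds uniformly on the moving window $|j|\le ct$ for every $c<c^*$ --- is not carried out. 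You describe the plateau-enlargement mechanism and then state that it ``is exactly the content of Theorem~2.3 in \cite{KoSh1,KoSh2}.'' That theorem \emph{is} the statement being proved, so at its core the argument is circular: the quantitative claim that a plateau near $u^*$ on $[-L,L]$ advances by $(c^*-\epsilon)$ per unit time, with an error $\eta'$ that can be driven back down to $\eta$ without the catch-up time accumulating as $L$ grows, is precisely the content that needs proof and is the only part that requires the full Weinberger-type recursion machinery (or, in the lattice setting, the sub-solutions built from the eigenfunctions of Section~3 translated along the front).

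If the intent is merely to justify the citation --- as the paper itself does --- then your outline is consistent with how the result is obtained in \cite{KoSh1,KoSh2}, and your observations that the inhomogeneity confined to $[-N,N]$ forces the two tails to behave like the homogeneous problem (hence the edge speed $c^*$ of Lemma~\ref{spreading-prop1}) are the right reductions. But as written the proposal should either be presented as a reduction to the cited theorem, with the upper bound and hair-trigger steps as the only original content, or the enlargement lemma must be stated and proved explicitly, e.g.\ by comparison with the compactly supported sub-solutions $\max\{\underline u,0\}$ of \eqref{v-underbar-eq1} recentered at $\pm(c^*-\epsilon)t$, together with a uniform-in-translation version of the hair-trigger convergence.
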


We remark that, in particular, for our main equation \eqref{main-eq}, the spreading speed coincides with the definition $c^*=\ds\frac{\lambda(\mu^*)}{\mu^*}=\inf_{\mu>0}\frac{\lambda(\mu)}{\mu}$ in the introduction of the current paper.

\begin{proposition}[Minimal Wave Speed]
\label{lowerbound-lemma}
   There does not exist a transition front of \eqref{main-eq} with speed less than $c^*$.
\end{proposition}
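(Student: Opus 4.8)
The plan is to argue by contradiction: I will use the spreading estimate of Lemma~\ref{spreading-prop2} to show that the interface of any transition front must advance to the right at asymptotic speed at least $c^*$, which is incompatible with a mean speed $c<c^*$ in the sense of Definition~\ref{meanspeed-def}.

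Suppose $\{u_j(t)\}_{j\in\ZZ}$ is a transition front whose mean wave speed is $c<c^*$, and fix $c_1\in(c,c^*)$. Put $\delta:=\tfrac12\min_j u_j^*>0$ (the minimum is positive because $u^*$ is the positive stationary solution and, by (H1), its limits as $j\to\pm\infty$ are positive zeros of the $f_j$, bounded below by a positive constant). By the transition front property $\lim_{j\to-\infty}(u_j(0)-u_j^*)=0$, there is $R>0$ with $u_j(0)\ge\delta$ for all $j\le -R$. Choose $\rho$ large enough that Lemma~\ref{spreading-prop2} applies to a bump of radius $\rho$, and define $z\in X^+$ by $z_j=\delta$ for $-R-2\rho\le j\le -R$ and $z_j=0$ otherwise; shrinking $\delta$ if necessary we may assume $z\le u^*$, and by construction $z\le u(0)$. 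By the comparison principle (Proposition~\ref{monotonicity-new-prop1}(2)), the solution $v_j(t):=u_j(t;z)$ satisfies $0\le v_j(t)\le u_j(t)\le u_j^*$ for all $t\ge0$ and $j\in\ZZ$.

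Since the inhomogeneity of the medium is confined to $|j|\le N$ and a compactly supported perturbation does not change the spreading speed (Lemma~\ref{spreading-prop1}), Lemma~\ref{spreading-prop2} applies to data concentrated near an arbitrary site; applied to $z$, which is concentrated near $-(R+\rho)$, it gives, for every $\eta\in(0,\delta)$, a time $T>0$ such that $v_j(t)>u_j^*-\eta>\delta$ whenever $t\ge T$ and $|j+R+\rho|\le c_1 t$. Hence $u_j(t)>\delta$ for all such $(t,j)$; letting $t\to\infty$ along fixed $j$ shows $u_j(t)\to u_j^*$ for every $j$, so for each large $t$ the largest site with $u_j(t)\ge\delta$ exceeds $-R-\rho+c_1 t$, and in particular the interface times $t_j$ of Definition~\ref{meanspeed-def} satisfy $t_j\to\infty$ as $j\to\infty$. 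Moreover, if $t_j\ge T$ and $j<-R-\rho+c_1 t_j$, then choosing $l$ with $j<l\le -R-\rho+c_1 t_j$ (which lies in the window above) would give $u_l(t_j)>\delta$, contradicting $u_l(t_j)<\delta$ for $l>j$; therefore
\begin{equation}
\label{interface-upper}
t_j\le \frac{j+R+\rho}{c_1}\qquad\text{for all }j\text{ large enough.}
\end{equation}

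Finally, fix $k_0$ with $t_{k_0}\ge T$. Because $t_j\to\infty$, we have $|t_j-t_{k_0}|\to\infty$ as $j\to\infty$, so Definition~\ref{meanspeed-def} gives $\tfrac{j-k_0}{t_j-t_{k_0}}\to c$, i.e.\ $t_j=\tfrac{j}{c}+o(j)$. Combining with \eqref{interface-upper} yields $\tfrac1c(1+o(1))\le\tfrac1{c_1}+O(1/j)$, hence $c\ge c_1$, contradicting $c<c_1$; thus no transition front of \eqref{main-eq} can have speed less than $c^*$. The main obstacle is the bookkeeping near the interface — placing an admissible sub-solution bump under the front and invoking Lemma~\ref{spreading-prop2} with the bump off-center (legitimized by Lemma~\ref{spreading-prop1}), and extracting from the spreading estimate that $t_j\to\infty$ (equivalently $c>0$) so that the mean-speed limit is applicable; once these are in place the contradiction is immediate.
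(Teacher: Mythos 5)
Your proposal is correct and follows essentially the same route as the paper: both argue by contradiction that the spreading-speed property (Lemmas~\ref{spreading-prop1}--\ref{spreading-prop2}) forces the solution to stay bounded away from zero at sites $j\le c_1t$ for any $c_1\in(c,c^*)$, which is incompatible with the interface of a front of mean speed $c<c_1$. The only difference is one of detail: the paper applies the spreading property to the front directly and asserts $u_{j_n}(t_n)\to 0$ along $j_n=c_1t_n$ from the transition-front definition, whereas you supply the intermediate bookkeeping (a compactly supported sub-datum plus comparison to legitimize the spreading lemma, and the interface times $t_j$ of Definition~\ref{meanspeed-def} to convert the spreading bound into $c\ge c_1$), which makes the same argument more airtight.
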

\begin{proof}
We prove this lemma by contradiction. Suppose that there is a transition front with speed $c <c^*$. Pick $c<c_1<c^*$. Choose $t_n$ such that $j_n=c_1t_n \in \ZZ$. By Lemma  \ref{spreading-prop1}, $\ds\liminf_{j_n \leq c_1t_n, t_n\to \infty} u_{j_n}(t_n)>0$. On the other hand, $j_n-ct_n=(c_1-c)t_n \to \infty$, by the definition of transition front, $\ds\lim_{j_n-ct_n \to \infty}u_{j_n}(t_n)=0$, which causes a contradiction.
\end{proof}

\subsection{Nonexistence of Transition Fronts for $\lambda>\lambda^*$}
In this subsection, we will show that if $\lambda>\lambda(\mu^{*})$, there are no transition fronts. In biological sense, transition fronts won't exist in strongly localized spatial inhomogeneous environments. We shall prove the following proposition.
\begin{proposition}
\label{main-lm}
If $\lambda>\lambda(\mu^{*})$, any  entire solution $u_j(t)$ of
  \eqref{main-eq} such that $0<u_j(t)<u_j^*$ satisfies that for any $c<\hat{c}$, there exists a $K>0$ such that for all $(t,j) \in \RR^{-}\times \ZZ$, $$ u_j(t) \leq K e^{-\mu^{*}(|j|-ct)},$$
  where $\mu^{*}$ is such that  $c^{*}=\frac{\lambda(\mu^{*})}{\mu^{*}}=\ds\inf_{\mu>0}\frac{\lambda(\mu)}{\mu}$. In particular, no transition fronts exist if $\lambda>\lambda^*$.
\end{proposition}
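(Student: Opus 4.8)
The statement splits into the pointwise decay estimate and the ``in particular'' clause, and the second is a one-line consequence of the first. Since $\lambda>\lambda^*$ forces $\hat\mu>\mu^*$, Lemma~\ref{Au-lm4}(3) gives $\hat c=c(\hat\mu)>c(\mu^*)=c^*$, so $[c^*,\hat c)$ is non‑empty; fix any $c<\hat c$. If $u$ were a transition front, then evaluating the bound $u_j(t)\le Ke^{-\mu^*(|j|-ct)}$ at a fixed $t<0$ and letting $j\to-\infty$ would give $u_j(t)\to0$ (because $|j|-ct\to+\infty$), contradicting $\lim_{j\to-\infty}(u_j(t)-u_j^*)=0$ together with $\inf_k u_k^*>0$. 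Moreover, for $t<0$ the weight $e^{-\mu^*(|j|-ct)}$ is non‑increasing in $c$, so the estimate for $c=c^*$ already implies it for every $c<c^*$; hence it suffices to prove the bound for $c\in[c^*,\hat c)$, the endpoint $c=c^*$ being recovered afterwards by a limiting argument.

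The plan is to prove the pointwise bound by contradiction, in the spirit of Propositions~\ref{Main-lm-up} and~\ref{main-lm4}, using the variation‑of‑constants representation~\eqref{nonlinear-sol}, the two‑sided heat‑kernel estimate~\eqref{heatkernel}, and the discrete parabolic Harnack inequality proved earlier. Suppose no $K$ works for some admissible $c$; then there are $t_n<0$ and $j_n\in\ZZ$ with $u_{j_n}(t_n)\,e^{\mu^*(|j_n|-ct_n)}\to\infty$, and, since $0<u_j(t)<u_j^*$ is bounded, necessarily $|j_n|-ct_n\to+\infty$. Passing to a subsequence we distinguish the cases $j_n\to+\infty$, $j_n\to-\infty$, and $\{j_n\}$ bounded with $t_n\to-\infty$. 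The two unbounded‑$j_n$ cases are entirely parallel, the only structural inputs being that $f_k(0)=1$ and $g_k(s):=(1-f_k(u_k))u_k\ge0$ for $|k|>N$ (by (H1), $f_k$ is decreasing), which hold in both tails; and the bounded‑$j_n$ case is reduced to them by first applying the Harnack inequality to spread the over‑largeness at $(j_n,t_n)$ onto a whole window of sites and then transporting it outward with the heat kernel. So it is enough to treat $j_n\to+\infty$. Writing $u_j(t)=A(t)-B(t)$ as in~\eqref{nonlinear-sol} with $T=t_n$, the term $-B(t)$ is under control because $g_k\ge0$ for $|k|>N$ while $|g_k|\le M$ for the finitely many $|k|\le N$: up to a correction supported near the origin that grows at most polynomially in the elapsed time (the $B_1,B_2,B_3$ bookkeeping of Proposition~\ref{Main-lm-up}), $u_j(t)$ is squeezed between $A(t)$ and $A(t)$ minus that correction.

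I expect the real work to be in turning the hypothesized over‑largeness into a contradiction with $0<u_j<u_j^*$. A warning is that at the critical rate $\mu^*$ and critical speed $c^*$ the exponent that a single large term of $A(t)$ produces through the kernel degenerates: by Lemma~\ref{Au-lm4}(2),(4), $-1+\tfrac{2\varsigma(z_0)}{z_0}=0$ with $z_0=\tfrac{2}{e^{\mu^*}-e^{-\mu^*}}$, so naive amplification of one value only yields polynomial growth. The resolution is to play the two ends of the inequality $0<u_j<u_j^*$ against each other: a solution that violates the claimed $\mu^*$-decay in the past must on the one hand obey a $\mu^*$-type lower bound in its tail (it spreads at speed at least $c^*$, by the spreading property Lemma~\ref{spreading-prop1}, so the tail cannot decay faster than rate $\mu^*$) and on the other hand an $\hat\mu$-type upper bound (its tail is governed by the principal eigenvector $\psi$ of $\Lambda$ from Lemma~\ref{p_solution-lemma}, which by the recursion for $|j|>N$ — whose characteristic roots are $e^{\pm\hat\mu}$, since $e^{\hat\mu}+e^{-\hat\mu}=1+\lambda$ is exactly $\lambda(\hat\mu)=\lambda$ — decays like $e^{-\hat\mu|j|}$), and for $\hat\mu>\mu^*$ these are incompatible once $|j|-ct$ is large; this is precisely the mechanism outlined in the paragraph preceding the proposition, made quantitative over negative times and both spatial tails via the heat‑kernel lower bound of~\eqref{heatkernel}. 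It is in this last step that both hypotheses are consumed: $\lambda>\lambda^*$ (equivalently $\hat\mu>\mu^*$) gives $\hat c>c^*$ and makes the incompatibility possible, while $c<\hat c$ keeps the imposed backward‑in‑time decay rate below the instability rate of the medium, namely $\mu^*c<\mu^*\hat c=\mu^*\lambda/\hat\mu<\lambda$.
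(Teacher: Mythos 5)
Your reduction of the ``in particular'' clause to the pointwise bound is correct (fix $t<0$, let $j\to-\infty$, and use $\inf_j u_j^*>0$), and the monotonicity-in-$c$ remark is fine. But the core of your argument for the pointwise bound has a genuine gap: you never supply a mechanism that actually produces decay in $|j|$ --- in particular as $j\to-\infty$ --- for negative times, and the mechanism you sketch in the last paragraph is the paper's informal heuristic, not a proof. The lower bound you invoke (``the tail cannot decay faster than rate $\mu^*$ because the solution spreads at speed at least $c^*$'') comes from Lemma \ref{spreading-prop1}, which concerns the forward evolution of front-like initial data in $\hat{X}^+$; it gives no a priori lower bound on an arbitrary entire solution $0<u_j(t)<u_j^*$ at negative times, which is exactly the object Proposition \ref{main-lm} is about. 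Likewise, Lemma \ref{p_solution-lemma} produces a positive $\ell^2$ eigenvector of $\Lambda$; it does not by itself bound the solution's tail by $e^{-\hat\mu|j|}$ without a comparison argument that you have not set up. So the ``incompatibility'' you rely on is between two estimates neither of which you have established for the solutions in question.

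The paper's actual proof runs backward in time through a different chain of lemmas. Lemma \ref{Aux-comparison-lemma} gives a heat-kernel lower bound $u_j(t)\geq k_\epsilon\gamma e^{(1-\epsilon)t}F(2t,j-j_0)$ below the light cone. Lemma \ref{main-lm2} is the heart: if $u_{j_0}(\bar t)$ were too large at some $\bar t\le-1$, $j_0\in[M,-c_\epsilon\bar t]$, then transporting that mass to the perturbed block $[-M,M]$ at time $\bar t+\beta|\bar t|$ seeds the sub-solution $\min\{\delta, Ae^{(\lambda_M-\epsilon)t}\}\phi^{(M)}$ built from the truncated principal eigenvector, which then grows at rate $\lambda_M-\epsilon\approx\lambda$ up to $t=0$ and forces $u_0(0)\ge 2u_0(0)$; the hypothesis $\lambda>\lambda^*$ (equivalently $\hat\mu>\mu^*$, hence $\hat c<2/z_0$ by Lemma \ref{Au-lm4}(4)) is consumed precisely in checking that the required $\beta$ satisfies $\beta<1$ via the concavity of $g$ in Lemma \ref{Au-lm4}(2). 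Lemma \ref{main-lm1} and a super-solution of the form $K_0u_0(0)[e^{\mu^*(j+ct_0+c^*(t-t_0))}+e^{\mu^*(2M-j+ct)}]$ then propagate this to all $j\ge M$, the mirror argument handles $j\le-M$, and the Harnack inequality fills in $|j|<M$. None of this appears in your sketch; adapting the $A(t)-B(t)$ bookkeeping of Proposition \ref{Main-lm-up} forward in time, as you propose, does not reach the two-sided, negative-time decay that the non-existence conclusion needs.
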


To prove the Proposition \ref{main-lm}, we show the following Lemmas \ref{Aux-comparison-lemma} and \ref{main-lm2}.
\begin{lemma}
\label{Aux-comparison-lemma}
For each $m \in \ZZ$ and $\epsilon > 0$ there exist $k_\epsilon,\delta > 0$ such that if $u_j(t)$ solves \eqref{main-eq} with
$u_{j_0}(0) \geq \gamma$, for any given $j_0$ and $\gamma \leq \frac{\delta}{2}$, then for $t \geq 0$ and $j \leq j_0+m-c^* t$,
$$
u_j(t)\geq k_\epsilon \gamma e^{(1-\epsilon)t} F(2t, j-j_0).
$$
\end{lemma}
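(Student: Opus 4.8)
The plan is to derive this Gaussian lower bound from the comparison principle (Proposition~\ref{monotonicity-new-prop1}) applied to an explicit subsolution built from the discrete heat kernel and its estimate \eqref{heatkernel}. Since the inequality is only required on the region $\mathcal R(t):=\{j\in\ZZ:\ j\le j_0+m-c^*t\}$ — strictly behind the critical front moving at speed $c^*$ — the true solution $u$ should be \emph{small} there, and that smallness is what makes the nonlinearity harmless. The candidate subsolution is, in the clean case, $\underline w_j(t)=k_\epsilon\,\gamma\,e^{(1-\epsilon)t}\,h^{\ZZ}_{2t}(j-j_0)$ for $j\in\mathcal R(t)$ and $\underline w_j(t)=0$ otherwise. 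The three things to verify are:
\[
\text{(a) }\underline w_j(t)\le\delta\ \text{on}\ \mathcal R(t);\qquad \text{(b) }\underline w\ \text{is a subsolution of \eqref{main-eq} on }\mathcal R(t);\qquad \text{(c) }\underline w\le u\ \text{on the parabolic boundary of }\mathcal R(t).
\]
Then the comparison principle gives $u\ge\underline w$ on $\mathcal R(t)$, and one invokes \eqref{heatkernel} once more to replace $h^{\ZZ}$ by $F$, losing only a factor $(1\pm\epsilon)$.

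For (a): on $\mathcal R(t)$ we have $|j-j_0|\ge c^*t-m$, so with $z=2t/|j-j_0|$ and the relations among $c^*$, $\mu^*$ and $z_0=\mathrm{csch}\,\mu^*$ from Lemma~\ref{Au-lm4} (one checks $c^*=2/z_0$ and $\varsigma(z_0)=z_0/2$) together with the monotonicity of $z\mapsto\varsigma(z)/z$, the exponent satisfies $-2t+|j-j_0|\varsigma(z)\le-(1-\eta)t+C$ where $\eta=\eta(t_0)\to0$ as a fixed start-up time $t_0$ is taken large; hence $e^{(1-\epsilon)t}h^{\ZZ}_{2t}(j-j_0)\le C e^{(\eta-\epsilon)t}$, which is bounded once $\eta<\epsilon$. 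Choosing $\delta$ and $k_\epsilon$ small then gives (a) using $\gamma\le\delta/2$. The same smallness feeds (b): at an interior site of $\mathcal R(t)$, $\underline w$ solves $\dot v_j=v_{j+1}-2v_j+v_{j-1}+(1-\epsilon)v_j$, so it is a subsolution of \eqref{main-eq} as soon as $f_j(\underline w_j)\ge1-\epsilon$, and for $|j|>N$ this is immediate from $f_j(\underline w_j)\ge f_j(0)-L\underline w_j=1-L\underline w_j\ge1-\epsilon$ after fixing $\delta\le\epsilon/L$. For (c), the decisive observation is that on the edge $j=j_0+m-c^*t$ the exponents $e^{t}$ (from $\Lambda$) and $e^{-2t+\cdots}$ (from $h^{\ZZ}$) cancel — again because $\varsigma(z_0)=z_0/2$ — so the linear evolution from the seed decays there only polynomially in $t$, whereas $\underline w$ decays like $e^{-\epsilon t}$, leaving ample room.

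Two points need real work. The first is start-up. For $t\in[0,t_0]$, and for $m\le0$ where $j_0\notin\mathcal R(0)$, the subsolution $\underline w$ cannot be used directly; I would instead compare $u$, via Proposition~\ref{monotonicity-new-prop1}(2), with the solution of \eqref{main-eq} issued from the datum equal to $\gamma$ at $j_0$ and $0$ elsewhere, use that for small $\gamma$ this solution is uniformly close over $[0,t_0]$ to $\gamma$ times the linear semigroup of $\Lambda$, and thereby get $u_j(t)\ge c\gamma$ on the finitely many relevant windows, with $c$ \emph{independent of $j_0$} by a compactness argument (only the position of $[-N,N]$ relative to $j_0$ matters, so there are finitely many cases up to translation away from $[-N,N]$); this both settles $[0,t_0]$ and re-seeds $\underline w$ at time $t_0$ at a suitable site of $\mathcal R(t_0)$.

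The second, and the main obstacle, is the passage across the perturbation region $\{|j|\le N\}$ when some $f_j(0)<1-\epsilon$: there the pointwise requirement $f_j(\underline w_j)\ge1-\epsilon$ in (b) fails, and the edge estimate in (c) requires the linear evolution to be bounded below with the \emph{sharp} rate $1$ rather than with the crude rate $\inf_j f_j(0)$. Both are resolved by the same ingredient — a heat-kernel comparison showing that, because $\{a_j-1\}$ is bounded and compactly supported, the linear semigroup $e^{t\Lambda}$ dominates a fixed positive multiple of $e^{t}$ times the free discrete heat kernel on the far region $\mathcal R(t)$, so the perturbation costs only an $O(1)$ multiplicative factor (absorbable into $k_\epsilon$) and not a change of exponential rate. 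This is exactly where the tools of Section~2 enter: the sharp two-sided estimate \eqref{heatkernel} for the unperturbed kernel and the parabolic Harnack inequality of Theorem~\ref{Harnack_inequality_2graph } for nonnegative solutions of the linear equation. I expect establishing this comparison cleanly and uniformly in $j_0$, and then stitching it into the subsolution construction across the moving edge of $\mathcal R(t)$, to be the most delicate part of the argument.
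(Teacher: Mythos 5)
Your overall strategy is the same as the paper's: compare $u$ against a subsolution of the form $k_\epsilon\gamma\, e^{(\text{rate})t}h^{\ZZ}_{2t}(j-j_0)$ on the region behind the ray of slope $c^*$, using the heat kernel estimate \eqref{heatkernel}, the identity $-1+2\varsigma(z_0)/z_0=0$ at the critical speed, and the monotonicity of $\varsigma(z)/z$ to control the subsolution along and behind the edge. Your step (a) and the start-up discussion are essentially the paper's argument. But there is a genuine gap in your step (c), the inequality $u\ge\underline w$ on the moving boundary $j=j_0+m-c^*t$. You argue that ``the linear evolution from the seed decays there only polynomially, whereas $\underline w$ decays like $e^{-\epsilon t}$, leaving ample room.'' This does not close the step: by (H1) one has $f_j(u_j)\le f_j(0)$, so the linear semigroup $e^{t\Lambda}$ dominates the nonlinear solution from \emph{above}, not from below; a polynomial decay rate for $e^{t\Lambda}(\gamma\delta_{j_0})$ on the edge therefore says nothing about a lower bound for $u$ there. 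The cheap Lipschitz substitute $f_j(u)\ge f_j(0)-Lu$ also fails, because behind the front $u$ is of order $u^*$, not small. The missing ingredient is the spreading property of \eqref{main-eq} (Lemma \ref{spreading-prop2}, imported from \cite{KoSh1,KoSh2}): the paper uses it to guarantee $u_{\bar j}(t)\ge\gamma$ along the ray $\bar j=-c^*t+2m$ for $t\ge t_\epsilon$, and only then does the comparison on the moving domain $\Omega$ go through. Without some such a priori lower bound on $u$ along the edge, the comparison cannot be started.

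A secondary issue is the one you flag yourself: with growth rate $1-\epsilon$ the pointwise subsolution inequality $f_j(\underline w_j)\ge 1-\epsilon$ can fail on the perturbed sites $|j|\le N$ where $f_j(0)$ may be small, and your proposed repair (a semigroup comparison showing $e^{t\Lambda}$ dominates a fixed multiple of the free kernel on the far region) is left entirely unexecuted and is nontrivial. The paper sidesteps this by running the subsolution at rate $l-\epsilon$ with $l=\min_j a_j$, so that $f_j(\gamma)\ge a_j-L\gamma\ge l-\epsilon$ holds at \emph{every} site once $\gamma\le\epsilon/L$, at the (unacknowledged) cost of producing the exponent $l-\epsilon$ rather than $1-\epsilon$ in the conclusion. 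So: same architecture, but your boundary step would fail as written, and your fix for the perturbation region is a plan rather than a proof.
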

\begin{proof}
Without loss of generality, set $j_0=0$. Let $l=\ds\min_j \{a_j\} \leq 1$. Note that $v_j(t)=\gamma k_\epsilon e^{(l-\epsilon)t} h^{\ZZ}_{2t}(j) v_0(0)$ is a solution of $$\dot v_{j}(t)=v_{j+1}(t)-2v_{j}(t)+v_{j-1}(t)+(l-\epsilon) v_{j}(t).$$ 
Since $\|v(t)\|_\infty \leq \gamma k_\epsilon e^{(1-\epsilon)t}$, we have $\|v(t)\|< \gamma$ if $k_\epsilon=e^{-2 t_\epsilon}$ for $t \leq t_\epsilon$. Let $\bar j=-c^* t+2 m$. Since by Lemma \ref{Au-lm4} (2) and (4), $c^*$ satisfies $-1 +2\frac{\zeta(2/c^*)}{2/c^*} =0$, there is a $t_\epsilon$ such that for $t>t_\epsilon$, 
\begin{align*}
&-1+2\frac{\zeta(2t/|\bar j|)}{2t/|\bar j|} \\
&=-1+2\frac{\zeta(2t/|c^* t-2 m|)}{2t/|c^* t-2 m|} \\
&=-1+2\frac{\zeta(2/|c^* -2 \frac{m}{t}|)}{2/|c^* -2 \frac{m}{t}|} \\
&\leq-1+2\frac{\zeta(2/|c^* -2 \frac{|m|}{t_\epsilon}|)}{2/|c^* -2 \frac{|m|}{t_\epsilon}|} \\
&< \epsilon/2.
\end{align*}
Then for $t>t_\epsilon$, $-(1+\epsilon)+2\frac{\zeta(2t/|\bar j|)}{2t/|\bar j|}<-\epsilon/2$. Therefore, for $t>t_\epsilon$, with \eqref{heatkernel}
\begin{align*}
v_{\bar j}(t)&\leq \gamma k_\epsilon e^{(1-\epsilon) t} h^{\ZZ}_{2 t}(\bar j) v_0(0)\\
& \leq (1+\epsilon)\gamma k_\epsilon v_0(0) e^{(-(1+\epsilon)+2\frac{\zeta(2t/|\bar j|)}{2t/|\bar j|})t}\\
&\leq (1+\epsilon)\gamma k_\epsilon v_0(0).
\end{align*}
Then for $t \geq t_\epsilon$, replace $k_\epsilon$ with $\frac{1}{(1+2\epsilon)v_0(0) }$ if the original $k_\epsilon$ is bigger than $\frac{1}{(1+2\epsilon)v_0(0) }$, and thus $(1+\epsilon) k_\epsilon v_0(0) <1$, that is, we also have $v_{\bar j}(t) \leq \gamma$. Furthermore, for $t>t_\epsilon \geq \frac{2|m|}{c^*}$ such that $\bar j< 0$, $v_{j}(t) \leq \gamma$ holds for all $j \leq \bar j$, because $v_{j}(t) \leq  \gamma (1+\epsilon)k_\epsilon e^{(1-\epsilon)t} F(2t,j) v_0(0)$ and $F(2t,j)$ is increasing in $j \in (-\infty, \bar j)$ by Lemma \ref{Au-lm4} (5). Thus we have either
$$
v_{j}(t) \leq \gamma, \forall t \in [0,t_\epsilon), j \in \ZZ,
$$
or 
$$
v_{j}(t)\leq \gamma, \forall t \geq t_\epsilon, j < -c^* t+2m.
$$

Let $\Omega=\big\{(t,j) \in \RR \times \ZZ| t \in [0,t_\epsilon) \times \ZZ \cup [t_\epsilon, \infty) \times (-\infty, -c^* t+2m)\big\}$. 
By spreading properties in Lemma \ref{spreading-prop2}, for any given $0<\gamma<\min \{u_j^*\}$, there exists a $t_\epsilon$ (if necessary, replace the original $t_\epsilon$ with the larger number) such that $u_{\bar j}(t) \geq \gamma$ and thus $v_{\bar j}(t) \leq \gamma \leq u_{\bar j}(t)$. Moreover, for any given $\epsilon$, there exists a $\gamma$ such that $f_j(\gamma)-l>0$ and thus $v$ is a sub-solution of \eqref{main-eq} on $\Omega$. Indeed,
\begin{align*}
&\dot v_{j}(t)-[v_{j+1}(t)-2v_{j}(t)+v_{j-1}(t)+f(v_{j}(t))v_{j}(t)]\\
&=\dot v_{j}(t)-[v_{j+1}(t)-2v_{j}(t)+v_{j-1}(t)+(l-\epsilon) v_{j}(t)]+(-f(v_{j}(t))+l-\epsilon)v_{j}(t)\\
&=(-f(v_{j}(t))+l-\epsilon)v_{j}(t)\\
&\leq(-f(\gamma)+l-\epsilon)v_{j}(t)\\
&\leq 0.
\end{align*}
Since $v_j(0) \leq u_j(0)$ for $j \in \ZZ$, by \textbf{comparison principle} (Lemma 2.1 in \cite{CGW}), $v_j(t) \leq u_j(t)$ on $\bar \Omega$. Note that $t \geq 0$ and $j \leq j_0+m-c^* t$ is a subset of $\bar \Omega$ and this completes the proof.

\end{proof}

\begin{lemma}
\label{main-lm2}
For every $\epsilon \in (0,1)$, there exists a $K_{\epsilon}>1$ such that 
$$ u_j(t) \leq K_{\epsilon}u_{0}(0) \sqrt{|t|}e^{\hat{\mu} j+ (\lambda_M-\epsilon)t},$$ for all $t\leq -1$ and $j \in [M,- c_{\epsilon} t]$, with $c_{\epsilon}=\frac{\lambda_M-\epsilon}{\hat{\mu}}$.
\end{lemma}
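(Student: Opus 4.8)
The plan is to prove the estimate in two stages: first a backward-in-time exponential decay of $u$ on the finite window $[-M,M]$ around the inhomogeneity, and then a propagation of that decay out to $j\in[M,-c_\epsilon t]$ by comparison on the space-time region $\Omega:=\{(t,j):t\le -1,\ M\le j\le -c_\epsilon t\}$. The key structural observation, used throughout, is that by (H1) each $f_j$ is strictly decreasing on $\RR^{+}$, so $f_j(u_j)\le f_j(0)=a_j$ for $u_j\ge 0$; hence $u$ is a sub-solution of the linear lattice equation $\dot v_j=v_{j+1}-2v_j+v_{j-1}+a_jv_j$, and on $\{j\ge M\}$ (where $a_j=1$ by (H2)) of $\dot v_j=v_{j+1}-2v_j+v_{j-1}+v_j$, whose fundamental solution satisfies $e^{s}h^{\ZZ}_{2s}(\cdot)\asymp e^{s}F(2s,\cdot)$ by \eqref{heatkernel}. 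This makes the comparison principle (Proposition \ref{monotonicity-new-prop1}) and the discrete heat-kernel bounds available.

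For the first stage I would test \eqref{main-eq} against the truncated principal eigenvector $\phi^{(M)}$ of $A_M$. Using $A_M^{T}=A_M$, the scalar $U(t):=\sum_{|j|\le M}\phi^{(M)}_j u_j(t)$ satisfies
\[
\dot U(t)=\lambda_M\,U(t)+\phi^{(M)}_M\,u_{M+1}(t)+\phi^{(M)}_{-M}\,u_{-M-1}(t)+\sum_{|j|\le M}\phi^{(M)}_j\bigl(f_j(u_j)-a_j\bigr)u_j(t),
\]
where the two boundary fluxes are $\ge 0$ and the last sum lies in $\bigl[-L\bigl(\sup_{|j|\le M}u_j\bigr)U,\,0\bigr]$ since $0<a_j-f_j(v)\le Lv$. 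Thus $\dot U\ge\bigl(\lambda_M-L\sup_{|j|\le M}u_j\bigr)U$, and once $\sup_{|j|\le M}u_j$ is small for $t\ll 0$ this upgrades to $\dot U\ge(\lambda_M-\epsilon)U$; integrating backward from $t=-1$ gives $U(t)\le C\,e^{(\lambda_M-\epsilon)t}$, hence $u_j(t)\le C'\,e^{(\lambda_M-\epsilon)t}$ on $[-M,M]$ for $t\le -1$. The parabolic Harnack inequality (Theorem \ref{Harnack_inequality_2graph }) then lets me replace the constant by a multiple of $u_0(0)$.

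For the second stage I would compare $u$ on $\Omega$ with $\bar u_j(t):=K_\epsilon u_0(0)\sqrt{|t|}\,e^{\hat\mu j+(\lambda_M-\epsilon)t}$, checking that $\bar u$ dominates $u$ along the parabolic boundary of $\Omega$: on the left edge $j=M$ by the first-stage bound (since $\sqrt{|t|}\,e^{\hat\mu M}\ge 1$), on the right edge $j=-c_\epsilon t$ because there $\bar u\equiv K_\epsilon u_0(0)\sqrt{|t|}\ge\sup_j u_j^{*}$ for $K_\epsilon$ large, and as $t\to-\infty$ (again by the first-stage bound near the left edge). The choice $c_\epsilon=(\lambda_M-\epsilon)/\hat\mu$ together with $\lambda(\hat\mu)=\lambda$ and the convexity/monotonicity facts of Lemma \ref{Au-lm4} are exactly what make the exponential rate of $e^{s}h^{\ZZ}_{2s}$ along the cone $j\sim -c_\epsilon t$ collapse to the profile $e^{\hat\mu j+(\lambda_M-\epsilon)t}$, with the residual $s^{-1/2}$ heat-kernel factor accounting for the $\sqrt{|t|}$; then the comparison principle, in its Phragm\'en--Lindel\"of form (valid because $u-\bar u$ is bounded above on $\Omega$ and tends to a nonpositive quantity as $t\to-\infty$), yields $u\le\bar u$ on $\Omega$, i.e.\ the claim.

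The main obstacle is the backward-in-time decay in the first stage: the lattice equation being (degenerately) parabolic, its backward Cauchy problem is ill-posed, so the decay cannot come from running a semigroup backwards but must be extracted from the spectral gap --- $\lambda_M$ being an isolated eigenvalue of $A_M$, the favorable sign of the nonlinear correction, and a bootstrap that upgrades a first crude rate to $\lambda_M-\epsilon$ once $u$ is known to be small on $[-M,M]$ for $t\ll 0$. Getting that initial crude smallness (for instance from the spreading behavior, or, for the transition fronts to which this will be applied, simply because sites ahead of the front have vanishing values), and making the passage from $U(t)$ to pointwise bounds with the precise $u_0(0)$-dependence quantitative, is where the real work lies; the finitely many sites $|k|\le N$ (where $f_k(0)$ may exceed $1$ and the nonlinearity has indefinite sign) and the boundary fluxes $\phi^{(M)}_{\pm M}u_{\pm(M+1)}$ must be tracked carefully.
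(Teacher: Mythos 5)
Your two-stage architecture is not the paper's, and its second stage contains a fatal sign error. The comparison function $\bar u_j(t)=K_\epsilon u_0(0)\sqrt{|t|}\,e^{\hat\mu j+(\lambda_M-\epsilon)t}$ is \emph{not} a super-solution of the linearization on $\{j>N\}$: a direct computation gives
\[
\partial_t\bar u_j-\bigl(\bar u_{j+1}-2\bar u_j+\bar u_{j-1}+\bar u_j\bigr)=\Bigl(\lambda_M-\epsilon-\tfrac{1}{2|t|}-\lambda(\hat\mu)\Bigr)\bar u_j<0,
\]
because $\lambda_M\le\lambda=\lambda(\hat\mu)$ (Lemma \ref{lambda_limit-lemma}) and $\epsilon>0$. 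So $\bar u$ is a strict sub-solution, the comparison principle yields the inequality in the wrong direction, and no Phragm\'en--Lindel\"of refinement repairs this. The deeper obstruction is that the pairing of the spatial rate $\hat\mu$ with the temporal rate $\lambda_M-\epsilon$ cannot come from a traveling-exponential barrier at all: an exact exponential $e^{\alpha j+\lambda(\alpha)t}$ with $\alpha=\hat\mu$ forces the temporal rate $\lambda(\hat\mu)=\lambda>\lambda_M-\epsilon$, which decays too fast backward in time to dominate the boundary data at $j=M$. In the paper this pairing is produced instead by the heat-kernel rate function, through the equation $g(z)=\lambda_M$ and the constraint $\beta<1$ via Lemma \ref{Au-lm4}(2) and (4).

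Your first stage also rests on an unproved hypothesis. The inequality $\dot U\ge\bigl(\lambda_M-L\sup_{|j|\le M}u_j\bigr)U$ is correct, but upgrading it to $\dot U\ge(\lambda_M-\epsilon)U$ requires $\sup_{|j|\le M}u_j(t)\le\epsilon/L$ for all $t\ll 0$, which you explicitly defer; for the entire solutions $0<u_j(t)<u_j^*$ to which the lemma is applied in Proposition \ref{main-lm} there is no a priori decay as $t\to-\infty$, and producing backward-in-time smallness from boundedness alone is essentially the content of the lemma, so the argument is circular there. The paper avoids both difficulties by arguing by contradiction entirely \emph{forward} in time: a hypothetical violation $u_{j_0}(\bar t)\ge K_\epsilon u_0(0)\sqrt{|\bar t|}e^{\hat\mu(j_0+c_\epsilon\bar t)}$ is transported to the window $[-M,M]$ at time $\bar t+\beta|\bar t|$ by the heat-kernel lower bound of Lemma \ref{Aux-comparison-lemma}, then amplified to time $0$ by the truncated-eigenvector sub-solution \eqref{Aux-sub-eq-v} growing at rate $\lambda_M-\epsilon$, forcing $u_0(0)\ge\min\{\delta\phi_0^{(M)},2u_0(0)\}$, which contradicts the one-point normalization $u_0(0)<\delta\phi_0^{(M)}$. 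If you want to salvage a direct argument you would have to replace your stage-two barrier by exactly this forward heat-kernel propagation, at which point you have reproduced the paper's proof.
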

\begin{proof}
Suppose the lemma to be false. Then there exist $\bar t\leq-1$ and $j_0 \in [M,-c_{\epsilon} \bar t]$ such that 
\begin{equation}
\label{Aux-sub-eq-j0}
 u_{j_0}(\bar t) \geq K_{\epsilon}u_{0}(0) \sqrt{|\bar t|}e^{\hat{\mu}( j_0+c_{\epsilon}\bar t)}.
 \end{equation}
Let $\tilde u=\delta \phi^{(M)}$ be a sub-solution of \eqref{main-eq} as in Lemma \ref{Truncated-sub}, where $\phi^{(M)}$ is the principal eigenvector to \eqref{eigen-eq-trucated} with $\|\phi^{(M)}\|_{\infty}=1$. Let $\underline{v}_j$ be given by
\begin{equation}
\label{Aux-sub-eq-v}
\underline{v}_j=\min\{\delta,A e^{(\lambda_M-\epsilon) t}\}\phi_j^{(M)}
\end{equation}
 Thus there exists a $\delta$ such that \underline{v} is also a sub-solution of \eqref{main-eq} for any $A>0$. Indeed, if $\tilde v_j:=A e^{(\lambda_M-\epsilon) t }\phi_j^{(M)}<\delta\phi_j^{(M)}$, choosing a $\delta$ such that $f_j(0)-f_j(\delta)<\epsilon$, we have
\begin{align*}
&(\tilde v_{j})_{t}- [\tilde v_{j+1}-2\tilde v_{j}+\tilde v_{j-1}+f_{j}(\tilde v_{j})\tilde v_{j}]\\
= &(\tilde v_{j})_{t}- \tilde v_{j+1}-2\tilde v_{j}+\tilde u_{j-1}+a_{j} \tilde v_{j}]+ (f_j(0)-f_j(\tilde v_{j}))\tilde v_{j}\\
=&(\lambda_M-\epsilon-\lambda_M +(f_j(0)-f_j(\tilde v_{j})))\tilde v_{j}\\
=&(-\epsilon +(f_j(0)-f_j(\tilde v_{j})))\tilde v_{j}\\
\leq &(-\epsilon+(f_j(0)-f_j(\delta)))\tilde v_{j}\\
\leq& 0.
\end{align*}
For $\tilde v \geq \delta\phi^{(M)}$, the above inequality holds for $\underline{v}_j=\tilde u_j$ by the calculation in Lemma \ref{Truncated-sub}.

With a possible translation, we assume that $u_0(0) < \tilde u_0$ and $f_j(0)>1$ for $j=0$. 
 Let $\beta$ be chosen later such that $0<\beta<1$.
For $-M \leq j \leq M$, let $z_1=2\beta |\bar t|/|j-j_0|$ and $z=2\beta |\bar t|/j_0$, we have $z_1> z$ and then by the monotonicity of $\frac{\varsigma(z)}{z}$ in Lemma \ref{Au-lm4}, we have $\frac{\varsigma(z_1)}{z_1}>\frac{\varsigma(z)}{z}$. By \textbf{Heat Kernel Estimate} \eqref{heatkernel} $h^{\ZZ}_{t}(j) \asymp F(t,j)$, there exist positive $C_{1}$ and $C_{2}$ such that $C_{1}F(t,j) \leq h^{\ZZ}_{t}(j)\leq  C_{2}F(t,j)$. Therefore, together with \eqref{Aux-sub-eq-j0} and Lemma \ref{Aux-comparison-lemma}, we have
\begin{align*}
u_j(\bar t&+\beta|\bar t|)
\geq C_{1} e^{(1-\epsilon)\beta|\bar t|}\sum_{k}F(2\beta|\bar t|,j-k)u_k(\bar t)\\
&\geq C_{1} e^{(1-\epsilon)\beta|\bar t|}F(2\beta|\bar t|,j-j_0)u_{j_0}(\bar t)\\
&\geq C_{1} e^{(1-\epsilon)\beta|\bar t|}F(2\beta|\bar t|,j-j_0)K_{\epsilon}u_{0}(0) \sqrt{|\bar t|}e^{\hat{\mu} j_0+  (\lambda_M-\epsilon) \bar t}\\
&= C_{1} e^{(1-\epsilon)\beta |t|} \frac{1}{\sqrt{2\pi}}\frac{exp[-2\beta|\bar t|+|j-j_0|\varsigma(2\beta |\bar t|/|j-j_0|)]}{(1+4\beta^2\bar t^2+(j-j_0)^2)^{\frac{1}{4}}} K_{\epsilon}u_{0}(0) \sqrt{|\bar t|}e^{\hat{\mu} j_0+  (\lambda_M-\epsilon) \bar t}\\
&= C_{1} K_{\epsilon}u_{0}(0) \frac{1}{\sqrt{2\pi}}\frac{exp[(-2+2\frac{\varsigma(z_1)}{z_1})\beta|\bar t|]}{(1+4\beta^2\bar t^2+(j-j_0)^2)^{\frac{1}{4}}}  \sqrt{|\bar t|}e^{(1-\epsilon)\beta |t|+\hat{\mu} j_0+  (\lambda_M-\epsilon) \bar t}\\
&\geq C_{1} K_{\epsilon}u_{0}(0) \frac{1}{\sqrt{2\pi}}\frac{exp[(-2+2\frac{\varsigma(z)}{z})\beta|\bar t|]}{(1+4\beta^2\bar t^2+j_0^2)^{\frac{1}{4}}}  \sqrt{|\bar t|}e^{(1-\epsilon)\beta |t|+\hat{\mu} j_0+  (\lambda_M-\epsilon) \bar t}\\
&\geq  C_{1} K_{\epsilon}u_{0}(0) \frac{1}{\sqrt{2\pi}}\frac{exp[(-2+2\frac{\varsigma(z)}{z})\beta|\bar t|]}{(1+4\bar t^2+(c_\epsilon \bar t)^2)^{\frac{1}{4}}}  \sqrt{|\bar t|}e^{(1-\epsilon)\beta |t|+\hat{\mu} j_0+  (\lambda_M-\epsilon) \bar t}\\
&\geq  C_{1} K_{\epsilon}u_{0}(0) \frac{1}{\sqrt{2\pi}}\frac{exp[(-2+2\frac{\varsigma(z)}{z})\beta|\bar t|]}{(5+c_\epsilon^2)^{\frac{1}{4}}} e^{(1-\epsilon)\beta |t|+\hat{\mu} j_0+  (\lambda_M-\epsilon) \bar t}\\
&\geq K'_{\epsilon}u_{0}(0) exp\{(-2+2\frac{\varsigma(z)}{z}+(1-\epsilon))\beta|\bar t|+\hat{\mu} j_0+  (\lambda_M-\epsilon) \bar t\}\\
&:=K''_{\epsilon},
\end{align*}
where $K'_{\epsilon}=C_{1} K_{\epsilon}\frac{1}{\sqrt{2\pi}(5+c_\epsilon^2)^{\frac{1}{4}}}$.
Thus, with \textbf{Comparison Principle}, choosing $A=K''_{\epsilon}$ in \eqref{Aux-sub-eq-v}, we have
$$u_j(t+\bar t+\beta|\bar t|)\geq \underline{v}_j(t)$$ for $t>0$. In particular, letting $t=(1-\beta)|\bar t|$, we have 
$$
u_j(0)\geq  \min\big\{\delta,K''_{\epsilon} e^{(\lambda_M-\epsilon) (1-\beta)|\bar t|}\big\}\phi_j^{(M)}.
$$
By choosing $\beta$ such that $K''_{\epsilon}e^{(\lambda_M-\epsilon) (1-\beta)|\bar t|}\phi^{(M)}=K'_{\epsilon}u_{0}(0)$, that is, 
\begin{equation*}
exp\big\{(-2+2\frac{\varsigma(z)}{z}+(1-\epsilon))\beta |\bar t|+\hat{\mu} j_0+  (\lambda_M-\epsilon) \bar t\big\} \times e^{(\lambda_M-\epsilon) (1-\beta)|\bar t|}=1.
\end{equation*}
Therefore
\begin{equation*}
\label{beta-eq}
(-2+2\frac{\varsigma(z)}{z}+(1-\epsilon)-(\lambda_M-\epsilon))\beta |\bar t|+\hat{\mu} j_0=0.
\end{equation*}
Recalling that $z=2\beta |\bar t|/j_0$, $-2+2\frac{\varsigma(z)}{z}+(1-\epsilon)-(\lambda_M-\epsilon))\beta |\bar t|+ 2\hat{\mu} \beta |\bar t|/z=0,$ that is,
$$-1+2\frac{\varsigma(z)+\hat\mu}{z}-(\lambda_M-\epsilon)-\epsilon=0.$$
Thus, let $g(z)$ be as in Lemma \ref{Au-lm4} and we have
$$
g(z)=\lambda_M< \lambda=\lambda(\hat \mu).
$$

By the concavity of $g(z)$ in Lemma \ref{Au-lm4} (2), and $M>>1$, there exists at least one $\bar z< z_0=csch(\hat\mu)$ such that $g(\bar z)=0$.
Recall that $\hat c< \frac{2}{z_0}$ for $\hat u> \mu^*$ by Lemma \ref{Au-lm4} (4). With $c_\epsilon<\hat c$, $j_0 \leq c_\epsilon |\bar t|$ and $\bar z=2\beta |\bar t|/j_0$, we have
\begin{equation*}
\beta=\frac{\bar z j_0}{2|\bar t|}
\leq \frac{z_0 j_0}{2|\bar t|}
\leq \frac{z_0 c_\epsilon}{2}
\leq \frac{z_0 \hat c}{2}
<1.
\end{equation*}
Thus, $\beta<1$ as required.
Finally, by taking $K_{\epsilon}$ large enough and $j=0$, $$ u_0(0) \geq \min\{\delta \phi_0^{(M)}, K'_{\epsilon} u_0(0)\} \geq \min\{\delta \phi_0^{(M)}, 2 u_0(0)\},$$ which causes a contradiction. This completes the proof.
\end{proof}
\begin{remark}
\label{rk-main-lm2}
Lemma \ref{main-lm2} holds for fixed $j$ and so for any $j$ on a compact set without the assumption $\lambda > \lambda^*$. Indeed, in this case, we can remove the restriction of $c_\epsilon=\frac{\lambda_M-\epsilon}{\hat \mu}$ and freely choose $c_\epsilon<\frac{2}{z_0}$ in the lemma.
\end{remark}

\begin{lemma}
\label{main-lm1}
Assume that  $c,c_1 \in (c^{*},\hat{c})$ with $c<c_1$,  there exists a $K_{0}>0$ and $\tau>0$ such that $$ u_j(t) \leq K_{0}u_{0}(0) e^{ \mu^{*}(j+ct)},$$ for all $(t,j) \in (-\infty,-1)\times [M, -c_1t]$ as well as $(t,j) \in (-\infty,-t_0)\times [M, \infty)$.
\end{lemma}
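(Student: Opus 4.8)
The plan is to take the exponential bound of Lemma~\ref{main-lm2}, which carries only the ``wrong'' rate $\hat\mu$ and holds only on a space--time triangle, and to upgrade the rate to $\mu^*$ while extending the region to all $j\ge M$ by comparing $u$ with the linear super-solution $\bar w_j(t):=K_0\,u_0(0)\,e^{\mu^*(j+ct)}$. First I would fix parameters: since $\lambda>\lambda^*$ forces $\hat\mu>\mu^*$ and hence $\hat c=c(\hat\mu)>c(\mu^*)=c^*$, the interval $(c^*,\hat c)$ is nonempty, and using $\lambda_M\to\lambda$ (Lemma~\ref{lambda_limit-lemma}) I would fix $M\gg1$ and $\epsilon>0$ so small that $c_\epsilon:=(\lambda_M-\epsilon)/\hat\mu\in(c_1,\hat c)$; this automatically gives $\lambda_M-\epsilon=\hat\mu c_\epsilon>\hat\mu c_1>\mu^* c$. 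Since $c>c^*=\lambda^*/\mu^*$ gives $\mu^* c>\lambda^*$, a one-line computation shows that for $j>N$ (where $a_j=1$),
\[
(\bar w_j)_t-\big[\bar w_{j+1}-2\bar w_j+\bar w_{j-1}+a_j\bar w_j\big]=\big(\mu^* c-(\lambda^*-1)-1\big)\bar w_j=\mu^*(c-c^*)\,\bar w_j>0,
\]
so $\bar w$ is a strict super-solution of \eqref{linear-eq} on $\{j>N\}$, while on the same set $u$ is a sub-solution of \eqref{linear-eq} because $f_j(u_j)\le f_j(0)=1=a_j$ by (H1)--(H2).

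Next I would treat the triangle. Applying Lemma~\ref{main-lm2} with this $M,\epsilon$: for $t\le-1$ and $j\in[M,-c_\epsilon t]$ one has $u_j(t)\le K_\epsilon u_0(0)\sqrt{|t|}\,e^{\hat\mu j+(\lambda_M-\epsilon)t}$. Writing $e^{\hat\mu j+(\lambda_M-\epsilon)t}=e^{\mu^*(j+ct)}e^{(\hat\mu-\mu^*)j}e^{(\lambda_M-\epsilon-\mu^*c)t}$ and using $\hat\mu-\mu^*>0$, $j\le-c_\epsilon t=c_\epsilon|t|$, and $\hat\mu c_\epsilon=\lambda_M-\epsilon$, one gets
\[
K_\epsilon u_0(0)\sqrt{|t|}\,e^{\hat\mu j+(\lambda_M-\epsilon)t}\ \le\ K_\epsilon u_0(0)\,\sqrt{|t|}\,e^{-\mu^*(c_\epsilon-c)|t|}\,e^{\mu^*(j+ct)},
\]
and since $c<c_\epsilon$ the prefactor $\sqrt{|t|}\,e^{-\mu^*(c_\epsilon-c)|t|}$ is $\le1$ once $|t|$ exceeds a threshold; enlarging $K_0$ to dominate the bounded remaining range $1<|t|\le\tau$ (on which $j$ ranges over a finite set) I obtain $u_j(t)\le\bar w_j(t)$ for every $t<-1$ and $j\in[M,-c_\epsilon t]\supseteq[M,-c_1t]$. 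This already proves the bound on $(-\infty,-1)\times[M,-c_1t]$, and in particular it yields the lateral control $u_M(t)\le\bar w_M(t)$ for $t\le-\tau$ (here one uses $\lambda_M-\epsilon>\mu^* c$, so that at the fixed site $j=M$ the $t$-decay of the Lemma~\ref{main-lm2} bound beats that of $\bar w_M$).

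It then remains to reach $j>-c_\epsilon t$. Fix $(t^*,j^*)$ with $t^*\le-\tau$ and $j^*>-c_\epsilon t^*$, and pick $T\le t^*$ very negative. On the region $\{(t,j):T\le t\le t^*,\ j\ge M\}$ (contained in $\{j>N\}$ once $\tau$ is large), $u$ is a sub- and $\bar w$ a super-solution of \eqref{linear-eq}; at the boundary site $j=M$ one has $u\le\bar w$ for $T\le t\le t^*$ by the previous step; on the initial slice $t=T$ one has $u_j(T)\le\bar w_j(T)$ for $M\le j\le-c_\epsilon T$ again by the previous step, while for $j>-c_\epsilon T$ one uses $u_j(T)\le\sup_k u^*_k$ together with $\bar w_j(T)\ge\bar w_{-c_\epsilon T}(T)=K_0u_0(0)e^{\mu^*(c-c_\epsilon)T}\to\infty$ as $T\to-\infty$ (because $c<c_\epsilon$ and $T<0$); after possibly enlarging $\tau$, the inequality $T\le t^*\le-\tau$ makes $T$ negative enough for this. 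The comparison principle in the moving-sub-domain form already used in the proof of Lemma~\ref{Aux-comparison-lemma} (cf.\ \cite{CGW}) then gives $u_j(t)\le\bar w_j(t)$ on the region, in particular $u_{j^*}(t^*)\le\bar w_{j^*}(t^*)$. Combined with the triangle estimate this yields the bound on $(-\infty,-\tau)\times[M,\infty)$, finishing the proof (with $t_0=\tau$, after possibly enlarging $K_0$ once more).

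I expect the main obstacle to be precisely this comparison step: the natural spatial domain $\{j\ge M\}$ is unbounded and, read backward in time, it ``opens up'' as $t\to-\infty$, so there is no canonical initial-time slice on which to seed the inequality. The point that makes it work is the strict inequality $c<c_\epsilon$, which makes $\bar w_j(t)$ blow up as $t\to-\infty$ along the moving edge $j\asymp-c_\epsilon t$, so that the crude a priori bound $u\le u^*$ already dominates $\bar w$ there while Lemma~\ref{main-lm2} covers the inner part of any truncated initial slice. One must also check that the nearest-neighbor structure of \eqref{main-eq} is respected near the moving edge, but this is automatic because Lemma~\ref{main-lm2} controls a whole strip to the left of the edge, not just the edge itself.
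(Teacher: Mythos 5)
Your handling of the triangle $(-\infty,-1)\times[M,-c_1t]$ is essentially the paper's own argument: apply Lemma \ref{main-lm2}, rewrite the exponent using $\hat\mu>\mu^*$ together with $j\le c_\epsilon|t|$ and $\hat\mu c_\epsilon=\lambda_M-\epsilon$, and absorb the $\sqrt{|t|}$ factor into the constant via the strict gap $c<c_\epsilon$ (the paper simply takes $c_\epsilon=c_1$ rather than $c_\epsilon\in(c_1,\hat c)$, which changes nothing). Where you genuinely diverge is the extension to $(-\infty,-t_0)\times[M,\infty)$. The paper disposes of the complementary region $j>-c_1t$ in one line: there $j+ct>(c_1-c)|t|$, so $K_0u_0(0)e^{\mu^*(j+ct)}\ge \max_k u_k^*\ge u_j(t)$ once $|t|\ge t_0$, with $t_0$ determined by $K_0u_0(0)e^{\mu^*(c_1-c)t_0}\ge\max_k u_k^*$ --- no comparison principle is needed because the claimed upper bound is vacuously large off the triangle. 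You instead run a forward-in-time comparison of $u$ against the super-solution $\bar w_j(t)=K_0u_0(0)e^{\mu^*(j+ct)}$ of \eqref{linear-eq} on the half-lattice $\{j\ge M\}$, seeded at a very negative time $T$ where $u\le u^*$ beats the blow-up of $\bar w$ beyond the moving edge $j\asymp -c_\epsilon T$, with lateral control at $j=M$ supplied by the triangle estimate. This is correct (the half-line comparison is legitimate since $\bar w-u\to+\infty$ as $j\to\infty$, and $u$ is indeed a sub-solution of \eqref{linear-eq} for $j>N$), but it is considerably heavier than the lemma requires; it is essentially the machinery the paper reserves for the subsequent Proposition \ref{main-lm}, where a two-sided exponential super-solution and the limit $t_0\to-\infty$ are used to propagate decay inward from the boundary $j=M$. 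So your second step buys nothing beyond what the trivial observation already gives, at the price of justifying a comparison principle on an unbounded domain with a receding initial slice.
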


\begin{proof}
Pick $\epsilon>0$ such that $c_{\epsilon}=c_1$. Note that $\lambda(\hat \mu)=\lambda>\lambda^*=\lambda(\mu^*)$ implies that $\hat \mu> \mu^*$. By Lemma \ref{main-lm2}, there is $C_{0}>0$ such that for all $t\leq-1$ and $j \in [M,-c_{\epsilon}t]$,
$$ u_j(t) \leq K_{\epsilon}u_{0}(0) \sqrt{|t|}e^{\hat{\mu} (j+c_1t)}\leq K_{\epsilon}u_{0}(0) \sqrt{|t|}e^{\mu^{*}(j+c_1t)}\leq K_{0}u_{0}(0)e^{\mu^{*}(j+ct)}.$$
We can let $\ds t_{0}\equiv \frac{ln(K_{0}u_{0}(0))- ln (\ds\max_{j}u_j^*)}{\mu^*(c_1-c)}>0$ to complete the proof of the second part. Indeed, for $t < -t_0$, we have $K_{0}u_{0}(0) e^{ \mu^{*}(j+ct)} \geq u_j^*$ for all $j> -c_1t$. Since $u_j(t) \leq u_j^*$ for all $(t,j) \in \RR \times \ZZ$, the inequality holds for all $(t,j) \in (-\infty,-t_0)\times [M, \infty)$.
\end{proof}

\begin{proof}[Proof of Proposition \ref{main-lm}]
Given $c,c_1 \in (c^{*},\hat{c})$ with $c<c_1$. Let $\tau_{1}\equiv M/c_1$ and so $M \leq -c_1 t$ for $t \leq -\tau_1$. By Lemma \ref{main-lm1}, for $t\leq -\tau_{1}$,
$$ u_{M}(t) \leq K_{0}u_{0}(0) e^{\mu^{*}(M+ct)}.$$
Next, for $t\leq -\tau_{0}$, we let
$$ v_{j}(t;t_{0}) \equiv K_{0}u_{0}(0) [e^{\mu^{*}(j+ct_{0}+c^{*}(t-t_{0}))}+e^{\mu^{*}(2M-j+ct)}].$$ Then $v_{j}(t;t_{0})$ is a super-solution on $(t_{0},\infty)\times(M,\infty)$. Moreover, for $t\leq -\tau_{0}$ and $j>M$, we have
$$ u_{j}(t_{0})\leq K_{0}u_{0}(0) e^{\mu^{*}(j+ct_{0})}\leq v_{j}(t_{0};t_{0}).$$ Since $c>c^{*}$, we have $u_{M}(t)\leq v_{M}(t;t_{0})$ for all $t \in (t_{0},-\tau_{1})$.  By comparison principle, $u_{j}(t)\leq v_{j}(t;t_{0})$ for all $t \in [t_{0},-\tau_{1}]$ and $j\geq M$. Letting $t_{0} \to -\infty$, we have for all $t \leq -\tau_{1}$ and $j\geq M$, $$ u_{j}(t)\leq K_{0}u_{0}(0) e^{\mu^{*}(2M-j+ct)}.$$ Similarly, we have for all $t \leq -\tau_{1}$ and $j\leq -M$, $$ u_{j}(t)\leq K_{0}u_{0}(0) e^{\mu^{*}(2M+j+ct)}.$$ Thus, for all $t \leq -\tau_{1}$ and $j\leq \ZZ \setminus (-M,M)$, $$ u_{j}(t)\leq K_{0}e^{2\mu^*M}u_{0}(0) e^{-\mu^{*}(|j|-ct)}.$$
The Harnack inequality extends this bound to all $t\leq -\tau_{1}-1$ and $j \in \ZZ$: $$ u_{j}(t)\leq K_{1}u_{0}(0) e^{-\mu^{*}(|j|-ct)}.$$ Thus,
$$ u_{j}(t)\leq K_{1}u_{0}(0) e^{-\mu^{*}(|j|-c(-\tau_{1}-1))+(1+\|a\|_{\infty})(t-(-\tau_{1}-1))},$$ for $t \geq -\tau_{1}-1$, where $\|a\|_{\infty}=\ds \max_{j} a_j$. We note that the right-hand side is a super solution. Thus, for $t\leq 0$ and $j \in \ZZ$, we have $$ u_{j}(t)\leq K_{2}u_{0}(0) e^{-\mu^{*}(|j|-ct)}.$$
Finally, we have the non-existence of transition fronts if $\lambda>\lambda^*$, because $\ds\lim_{j \to -\infty}u_j(t)=0$ under the above inequality.
\end{proof}

\subsection{The Upper Bound of Wave Speeds $\hat c$} \label{upper bound section}
Finally, we shall show $\hat{c}$ is the upper bound of the speeds (maximal wave speed) by investigating the nonexistence of transition fronts to \eqref{main-eq} for $c > \hat{c}$, which is corresponding to $\mu \in (0, \hat{\mu})$ where no valid positive eigenvectors of \eqref{eigen-eq} can be located.

\begin{lemma}
\label{main-lm-prop}
For all $\epsilon>0$, there exists $K_\epsilon>0$ such that
\begin{equation}
\label{ubounded0}
u_{j}(t) \leq K_{\epsilon} e^{\lambda(\hat{\mu}-\epsilon)t-(\hat{\mu}-\epsilon)j},\text{\ for all\ }  j\geq 0 \text{\ and \  } t \leq 0.
\end{equation}

\end{lemma}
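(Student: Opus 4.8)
The plan is to argue by contradiction, adapting the ``push-back'' scheme of Lemma \ref{main-lm2}. Fix $\epsilon>0$, set $\nu:=\hat\mu-\epsilon$ and $c_\nu:=\lambda(\nu)/\nu$; recall that $\lambda(\nu)<\lambda(\hat\mu)=\lambda$ and that Lemma \ref{Au-lm4}(3)--(4) applies to $\nu$. It suffices to prove \eqref{ubounded0} with an extra factor $\sqrt{|t|}$ and with $\nu$ slightly enlarged, since for $t\le0$, $j\ge0$ a polynomial times such an exponential is dominated by a constant times $e^{\lambda(\hat\mu-\epsilon)t-(\hat\mu-\epsilon)j}$. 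Suppose this weak form failed: for every $K>0$ there would be $j_0\ge0$ and $\bar t\le0$ with $u_{j_0}(\bar t)>K\sqrt{|\bar t|}\,e^{-\nu(j_0-c_\nu\bar t)}$. The first observation is that a violation can then be found in the wedge $j_0\le C|\bar t|$ for a fixed $C$: by the leading-edge lower bound of Proposition \ref{main-lm4} --- applicable because a transition front of speed $c>\hat c>c^*$ has leading-edge decay rate exactly the $\mu$ with $c(\mu)=c$ on the branch $\mu<\mu^*$, whence $\mu<\hat\mu$ --- the value of $u$ at a fixed index decays, as $t\to-\infty$, no faster than $e^{\lambda(\mu)t}$, which overtakes the claimed bound $\asymp\sqrt{|t|}\,e^{\lambda(\nu)t}$ because $\lambda(\mu)<\lambda(\nu)$ for $\epsilon$ small; a violation at a fixed small index and $|\bar t|$ large then lies in the wedge. (If the front has mean speed $c=c^*$, so $\mu=\mu^*>\hat\mu$, then \eqref{ubounded0} already follows from the tail bound, and speeds $<c^*$ are excluded by Proposition \ref{lowerbound-lemma}.)

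For such a wedge violation, translate time so that $\bar t+|\bar t|=0$, split $[\bar t,0]$ at an intermediate time $\bar t+\beta|\bar t|$ with $\beta\in(0,1)$ to be chosen, apply Lemma \ref{Aux-comparison-lemma} on $[\bar t,\bar t+\beta|\bar t|]$ to carry $u_{j_0}(\bar t)$ back onto the core $[-M,M]$ of $\phi^{(M)}$, and then run the compactly supported sub-solution $\delta\phi^{(M)}$ of Lemma \ref{Truncated-sub} on $[\bar t+\beta|\bar t|,0]$. Writing $z:=2\beta|\bar t|/j_0$, the heat-kernel bounds \eqref{heatkernel} give $F(2\beta|\bar t|,j-j_0)\asymp|\bar t|^{-1/2}e^{(-2+2\varsigma(z)/z)\beta|\bar t|}$ on $[-M,M]$, the factor $|\bar t|^{-1/2}$ absorbing the $\sqrt{|\bar t|}$ of the hypothesis; using $\nu j_0=2\nu\beta|\bar t|/z$ and that the temporal growth rate of $\delta\phi^{(M)}$ is $\lambda_M$ with $\lambda_M\uparrow\lambda$ (Lemma \ref{lambda_limit-lemma}), the comparison principle yields $u_0(0)\gtrsim\min\{\delta\phi_0^{(M)},\;Ke^{E}\}$ with $E=|\bar t|[-\lambda(\nu)+\beta(h(z)-\epsilon')+(\lambda_M-\epsilon')(1-\beta)]$ and $h(z):=-1+2(\varsigma(z)-\nu)/z$. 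Choosing $\beta$ so that $E=0$ forces $\beta=(\lambda_M-\lambda(\nu)-\epsilon')/(\lambda_M-h(z))$ together with the coupling $z=2\beta|\bar t|/j_0$; since $h$ increases from $-\infty$ to $1$ (Lemma \ref{Au-lm4}(5)) while $\lambda(\nu)>1=\sup h$, this $\beta$ lies automatically in $(0,1)$, and the coupled equation for $z$ is solvable precisely on the wedge $j_0\le C|\bar t|$. With this choice $Ke^{E}$ equals a fixed multiple $K'$ of $u_0(0)$ up to $K$-independent constants, so $u_0(0)\gtrsim\min\{\delta\phi_0^{(M)},K'u_0(0)\}$ with $K'$ as large as desired once $K$ is chosen large --- impossible. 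As in Lemma \ref{main-lm2}, the parabolic Harnack inequality on the $2$-regular graph is used throughout to pass from pointwise to neighborhood estimates and to perform the time translation.

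The step I expect to be the main obstacle is the quantitative bookkeeping: one must control the polynomial prefactors in \eqref{heatkernel} and the accumulated $\epsilon'$-losses precisely enough that the balancing of the three time windows really produces $Ke^{E}\asymp K'u_0(0)$ with $K'\to\infty$ as $K\to\infty$, and that every factor multiplying $u_0(0)$ on the core is genuinely $K$-independent. The ``locate a violation in the wedge'' reduction should be routine given Proposition \ref{main-lm4}, and --- a pleasant feature of this case --- the verification $\beta\in(0,1)$, which in Lemma \ref{main-lm2} required $\hat c<2\sinh\hat\mu$ (hence $\lambda>\lambda^*$), is here automatic, because the $j_0$-weight enters with a minus sign ($-\nu j_0$ rather than $+\hat\mu j_0$), which replaces the auxiliary function $g$ (maximum $\lambda$) by $h$ (supremum $1$).
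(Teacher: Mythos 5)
Your push-back scheme, as in Lemma \ref{main-lm2}, can only reach points $(j_0,\bar t)$ with $j_0\lesssim|\bar t|$: the heat kernel over a window of length $\beta|\bar t|$ transports information a distance $O(|\bar t|)$ at sub-exponential cost, and indeed your coupled equation $z=2\beta(z)|\bar t|/j_0$ is solvable only on such a wedge, as you yourself note. The genuine content of Lemma \ref{main-lm-prop}, however, lies precisely in the complementary region $j\gg|t|$ --- in particular the slice $t=0$, $j\to\infty$, which is the only part of the lemma actually invoked downstream (see \eqref{ubounded_t0} in the proof of Proposition \ref{Nonexistence-prop}); there the decay rate $\hat\mu-\epsilon$ must come from the spectral bound and cannot come from Proposition \ref{Main-lm-up}, which only yields the weaker rate $\mu-\epsilon<\hat\mu-\epsilon$ when $c>\hat c$. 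Your reduction of an arbitrary violation to one in the wedge does not work: Proposition \ref{main-lm4} is a forward-time, leading-edge statement (it requires $t_j>T>0$), so it says nothing about $t\to-\infty$; and if one did extend it formally to negative times it would give $u_{j_0}(t)\gtrsim e^{\lambda(\mu)t}$ at fixed $j_0$, which, since $\lambda(\mu)<\lambda(\hat\mu-\epsilon)$, would contradict the very bound you are trying to prove rather than localize its failures. So that step is circular where it is not vacuous, and the case of $j_0$ large relative to $|\bar t|$ is simply not addressed.

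The paper closes exactly this gap by a mechanism your proposal has no analogue of. It first obtains \eqref{ubounded0} on a bounded set of $j$ from Remark \ref{rk-main-lm2} (your wedge computation, restricted to fixed $j$, recovers this part, and your observation that the sign flip from $+\hat\mu j$ to $-(\hat\mu-\epsilon)j$ makes $\beta\in(0,1)$ automatic is correct there). It then proves the crude a priori bound $u_j(t)\le Ce^{t}$ for $j\ge M_0$, $t\le0$, by summing the equation over $j\ge M_0$ and integrating $\frac{d}{dt}\left(-e^{-t}\rho(t)\right)$, and finally propagates the boundary bound at $j=M_0$ \emph{laterally in $j$} via a discrete maximum principle: $e^{\lambda(\hat\mu-\epsilon)t-(\hat\mu-\epsilon)(j-M-1)}$ is an exact solution of the linearization in the homogeneous region, hence a supersolution of \eqref{main-eq} there, and the function $w_j(t)=e^{-t}u_j(t)-K_\epsilon e^{(\lambda(\hat\mu-\epsilon)-1)t-(\hat\mu-\epsilon)(j-M-1)}$ can attain no positive supremum on $\{j\ge M_0,\ t\le0\}$. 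Some version of this comparison-in-$j$ step is indispensable; without it your argument does not establish the statement for all $j\ge0$.
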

\begin{proof}
First, there exist $M$ and $\bar \epsilon$ such that $\lambda(\hat{\mu}-\epsilon)=\lambda_M-\bar \epsilon$. By Remark \ref{rk-main-lm2} of Lemma \ref{main-lm2}, we have that \eqref{ubounded0} holds for fixed $j$ and thus also for $j$ in a bounded set of $\ZZ^+$. 
Second, we show that
\begin{equation}
\label{ubounded}
u_{j}(t) \leq C e^{t} \text{\ for all\ }  j\geq 0 \text{\ and \  } t \leq 0. 
\end{equation}

Let $\rho(t)=\ds \sum_{j \geq M_0} u_j(t)$, which is well-defined due to Proposition \ref{Main-lm-up}. Then
\begin{align*}
\dot \rho(t)&=\sum_{j \geq M_0} \dot u_j(t)\\
&= \sum_{j \geq M_0} (u_{j+1}-2u_{j}+u_{j-1}+f_j(u_{j})u_{j}(t))\\
&=  u_{M_0-1}(t)-u_{M_0}(t) + \sum_{j \geq M_0} (f_j(u_{j})u_{j}(t))
\end{align*}

Therefore, 
\begin{align*}
\dot \rho(t)-\rho(t)=  u_{M_0-1}(t)-u_{M_0}(t) + \sum_{j \geq M_0} (f_j(u_{j})-1)u_{j}(t).
\end{align*}

For $j>M_0$ and $t << -1$, $f_j(u_{j})-1=f_j(u_{j})-f_j(0)<0$ and thus $$\ds\sum_{j \geq M_0}(f_j(u_{j})-1)u_{j}(t)< 0.$$ 

Then,
\begin{align*}
\frac{d}{dt}(-e^{-t} \rho(t))&=-e^{-t}(\dot \rho(t)-\rho(t))\\
&= -e^{-t}( u_{M_0-1}(t)-u_{M_0}(t) + \sum_{j \geq M_0} (f_j(u_{j})-1)u_{j}(t))\\
&\leq -e^{-t}( u_{M_0-1}(t)-u_{M_0}(t)) \\
&\leq e^{-t}(u_{M_0}(t) +u_{M_0-1}(t))\\
&\leq e^{-t}K_{\epsilon} (1+e^{(\hat{\mu}-\epsilon)})e^{\lambda t-(\hat{\mu}-\epsilon)M_0} \\
&= K_{\epsilon} (1+e^{(\hat{\mu}-\epsilon)})e^{(\lambda -1)t-(\hat{\mu}-\epsilon)M_0}.
\end{align*}

Integrate both sides from $t(\leq 0)$ to 0, and we have $e^{-t} \rho(t)- \rho(0) \leq \frac{K_{\epsilon}(1+e^{(\hat{\mu}-\epsilon)})}{(\lambda -1)} e^{(\hat{\mu}-\epsilon)M_0}.$ 
Let $C=\rho(0)+\frac{K_{\epsilon}(1+e^{(\hat{\mu}-\epsilon)})}{(\lambda -1)} e^{(\hat{\mu}-\epsilon)M_0}$. For $t \leq 0$, $\rho(t) \leq C e^t$. 
Therefore, \eqref{ubounded} holds for $j \geq M_0$. We let 
\begin{equation}
\label{aux_4_12}
w_{j}(t) = e^{-t}u_{j}(t)- K_{\epsilon}e^{(\lambda(\hat{\mu}-\epsilon)-1)t-(\hat{\mu}-\epsilon)(j-M-1)}.
\end{equation}

Then, for $j>N$, we have
\begin{align*}
\dot{w}_{j}(t) &= e^{-t}\dot u_{j}(t) -e^{-t}u_{j}(t)-(\lambda(\hat{\mu}-\epsilon)-1)K_{\epsilon}e^{(\lambda(\hat{\mu}-\epsilon)-1)t-(\hat{\mu}-\epsilon)(j-M-1)}\\
&= e^{-t}(u_{j+1}-2u_{j}+u_{j-1}+(f_j(u_{j})-1)u_{j}(t))\\
&\quad\quad\quad\quad\quad\quad\quad\quad\quad\quad-(\lambda(\hat{\mu}-\epsilon)-1)K_{\epsilon}e^{(\lambda(\hat{\mu}-\epsilon)-1)t-(\hat{\mu}-\epsilon)(j-M-1)}\\
&= e^{-t}(u_{j+1}-2u_{j}+u_{j-1})-(\lambda(\hat{\mu}-\epsilon)-1)K_{\epsilon}e^{(\lambda(\hat{\mu}-\epsilon)-1)t-(\hat{\mu}-\epsilon)(j-M-1)}\\
&\quad\quad\quad\quad\quad\quad\quad\quad\quad\quad+e^{-t}(f_j(u_{j})-1)u_{j}(t).
\end{align*}
On the other hand, we have
\begin{align*}
w_{j+1}-2w_{j}+w_{j-1} &=e^{-t}(u_{j+1}-2u_{j}+u_{j-1})-(\lambda(\hat{\mu}-\epsilon)-1)K_{\epsilon}e^{(\lambda(\hat{\mu}-\epsilon)-1)t-(\hat{\mu}-\epsilon)(j-M-1)}
\end{align*}
Thus,
\begin{align}
\begin{split}
\label{aux_4_13}
\dot{w}_{j}(t)&-(w_{j+1}-2w_{j}+w_{j-1}) \\
&=e^{-t}(f_j(u_{j})-1)u_{j}(t)\\
&\leq 0.
\end{split}
\end{align}

Note that $w_{M_0}(t) \leq 0$ for $t \leq 0$ because \eqref{ubounded0} holds for fixed $j=M_0$ that has been proved previously. By \eqref{ubounded}, $ w_{j}(t) <  e^{-t}u_{j}(t)<C$ for all $j \geq 0$ and $t \leq 0$. For $t \leq 0$ and $j \geq M_0$, choose $\epsilon$ such that  $\lambda(\hat{\mu}-\epsilon)>1$, then $w_j(t)$ is bounded. Furthermore we claim that for $j>M_0$ and $t \leq 0$, $w_j(t)$ cannot attain a positive maximum, and there cannot be a sequence $(t_n,j_n)$ such that $w_{j_n}(t_n)$ tends to a positive supremum. Suppose that it obtains a positive maximum at $(t_0,j_0)$ and for $M_0 \leq j <j_0$, $w_j(t_0)<w_{j_0}(t_0)$. Then $\dot{w}_{j_0}(t_0)-(w_{j_0+1}-2w_{j_0}+w_{j_0-1}) >0$, which contradicts \eqref{aux_4_13}. Suppose that there is a sequence $(t_n,j_n)$ such that $w_{j_n}(t_n)$ tends to a positive supremum for $j>M_0$. Then $j_n \to \infty$ as $n \to \infty$, otherwise $j_n $ goes to some fixed $\hat j$ as $n \to \infty$ that contradicts with \eqref{ubounded0} holds for fixed $\hat j$. And then $t \to -\infty$ as $n \to \infty$. Otherwise, $t \to -T$ as $n \to \infty$ for some $T>0$. With \eqref{aux_4_12}, we have $\ds \lim_{n \to \infty} w_{j_n}(T)=0$. Therefore, $w_{j}(t)\leq0$ for all $j \geq M_0$ and $t \leq 0$, which implies that \eqref{ubounded0} holds for all $j \geq M_0$ and $t \leq 0$. Finally, with that \eqref{ubounded0} holds on the compact set, in particular  $[0,M_0]$, this completes the proof. 
\end{proof}

\begin{proof}[Proof of Proposition \ref{Nonexistence-prop}]
First, we proved the case with $\lambda>\lambda^*$ by Proposition \ref{main-lm}. Next, it has been shown in Proposition \ref{lowerbound-lemma} that there are no transition fronts for $c<c^*$ due to the properties of spreading speeds. Finally we prove the nonexistence for $c>\hat{c}$. Assume there exists a transition front $u_{j}(t)$ for $c>\hat{c}$. Let $\mu$ and $\hat \mu$ be such that $c=\frac{\lambda(\mu)}{\mu}$ and $\hat c=\frac{\lambda( \hat \mu)}{\hat \mu}$. Then we have that $0<\mu<\hat \mu<\mu^*$. By Proposition \ref{main-lm4}, we have
\begin{equation}
\label{u_bounded_estimate}
u_{j}(t) \ge K_{\epsilon} e^{-(\mu+\epsilon)(j-ct)},\text{\ for all\ }  j \geq ct+M_{0}  \text{\ and \  }t\geq 0. 
\end{equation}
By Lemma \ref{main-lm-prop}, in particular for $t=0$, we have that
\begin{equation}
\label{ubounded_t0}
u_{j}(0) \leq K_{\epsilon} e^{-(\hat{\mu}-\epsilon)j},\text{\ for all\ }  j\geq 0.
\end{equation}

Consider the linear periodic equation restricted on $[M_0,M_0+p]$ for $p>>1$ (i.e. $p$ is as large as required), that is, $v_{j+p}=v_{j}$ for any $j \in \ZZ$.
\begin{equation}
\label{partial-eq}
\ds\dot{v}_{j}=v_{j+1}-v_{j}+v_{j-1},\quad M_0 \leq j \leq M_0+p.
\end{equation}
Let $\bar v_j(t)=K_{\epsilon} e^{-(\hat{\mu}-\epsilon)(j-ct)}$ for $j \in [M_0,M_0+p)$ and $ \underline{v}_j(t)=u_j(t)$ for $j \in [M_0,M_0+p)$. Then by \eqref{ubounded_t0}
$\underline{v}_{j}(0) \leq \bar v_j(0)$.

By direct calculation, we have
\begin{align*}
&(\bar v_{j})_{t}- [\bar v_{j+1}-\bar v_{j}+\bar v_{j-1}]\\
&=\bar v_{j}((\hat{\mu}-\epsilon) c-(e^{(\hat{\mu}-\epsilon)}-1+e^{-(\hat{\mu}-\epsilon)}))\\
&=\bar v_{j}(\hat{\mu}-\epsilon) \big(c-\frac{(e^{(\hat{\mu}-\epsilon)}-1+e^{-(\hat{\mu}-\epsilon)}))}{(\hat{\mu}-\epsilon)}\big )\\
&=\bar v_{j}(\hat{\mu}-\epsilon) \big(c-c(\hat{\mu}-\epsilon)\big )\\
&\geq 0.
\end{align*}

Choose $\epsilon =(\hat\mu-\mu)/3$ and we have $c>c(\hat{\mu}-\epsilon)=\frac{(e^{(\hat{\mu}-\epsilon)}-1+e^{-(\hat{\mu}-\epsilon)})}{(\hat{\mu}-\epsilon)}$. Thus, $\bar{v}$ is a super-solution of \eqref{partial-eq}.
In addition, for $j \geq M_0$, we have
$$\dot{u}_{j}-(u_{j+1}-2u_{j}+u_{j-1})=(f_j(u_{j})-f_j(0))u_j \leq 0.$$ 
Thus, $\underline{v}$ is a sub-solution of \eqref{partial-eq}.
By the comparison principle and letting $p \to \infty$, for $j>M_0+ct$, we have that
\begin{equation}
\label{uhat_estimate}
u_{j}(t) \leq K_{\epsilon} e^{-(\hat{\mu}-\epsilon)(j-ct)}.
\end{equation}
However, this contradicts with \eqref{u_bounded_estimate} by choosing $\epsilon =(\hat\mu-\mu)/3$.

\end{proof}

\begin{proof}[Proof of Theorem \ref{Existence and Nonexistence}]
\begin{itemize}
\item[(1)] Existence of transition fronts and tail estimates \eqref{tail_estimate_t0} have been shown in Propositions \ref{existence-prop1} - \ref{main-lm4}.
\item[(2)] Non-existence of transition fronts follows by Proposition \ref{Nonexistence-prop}.
\end{itemize}
\end{proof}

\section{Example}
In this section, we provide an example for localized perturbations in homogeneous media of \eqref{main-eq} with $f_j(u_j)=a_j-u_j$ and $a_j=1$ for $j \neq 0$. Thus \eqref{main-eq} becomes the following,
\vspace{-.1in}\begin{equation}
\label{main-example}
\ds\dot{u}_{j}=u_{j+1}-2u_{j}+u_{j-1}+a_ju_j(1-u_j),\quad j \in \ZZ,
\vspace{-.1in}\end{equation}
with $a_j=1$ for $j \neq 0$. The corresponding linearized equation is given by
\vspace{-.1in}\begin{equation}
\label{main-example-linear}
\ds\dot{u}_{j}=u_{j+1}-2u_{j}+u_{j-1}+a_ju_j,\quad j \in \ZZ.
\vspace{-.1in}\end{equation}
The eigenvalue problem is given by
\vspace{-.1in}\begin{equation}
\label{main-example-eigen}
\ds\lambda(\mu) u_j=e^{\mu}u_{j+1}-2u_{j}+e^{-\mu}u_{j-1}+a_ju_j,\quad j \in \ZZ.
\vspace{-.1in}\end{equation}

For homogeneous case, all $a_j$s are ones. By observation, $\lambda(\mu)=e^{\mu}-1+e^{-\mu}$ with constant eigenvector 1. It is easy to verify that $u_j(t)=e^{-\mu (j-ct)}$ is a solution of \eqref{main-example-linear} with $c=\frac{\lambda(\mu)}{\mu}$. Next we investigate the existence of the positive eigenvectors of \eqref{main-example-eigen} for the localized perturbation case $a_0 \neq 1$. We assume that one solution to localized perturbation case coincides with homogeneous case at the right with $u_j(t)=e^{-\mu (j-ct)}$ for $j \geq 0$. From \eqref{main-example-linear}, we have
\vspace{-.1in}\begin{equation}
\label{main-example-linear_iteration}
\ds u_{j-1}=\dot{u}_{j}+(2-a_j)u_{j}-u_{j+1},\quad j \in \ZZ.
\vspace{-.1in}\end{equation}

Thus, by induction, for $j<0$, 
\begin{align*}
 u_{-1}&=\dot{u}_{0}+(2-a_0)u_{0}-u_{1}=(1+(1-a_0)e^{-\mu})e^{-\mu(-1-ct)},\\
u_{-2}&=\dot{u}_{-1}+u_{-1}-u_{0}=(1+(1-a_0)e^{-\mu}+(1-a_0)e^{-3\mu})e^{-\mu(-2-ct)},\\
u_{-3}&=\dot{u}_{-2}+u_{-2}-u_{-1}=(1+(1-a_0)(e^{-\mu}+e^{-3\mu}+e^{-5\mu}))e^{-\mu(-3-ct)}, \\
&\vdots\\
u_{j}&=(1+(1-a_0)e^{-\mu}\frac{1-e^{2\mu j}}{1-e^{-2\mu}})e^{-\mu(j-ct)}.
\end{align*}

Therefore, the eigenvector to \eqref{main-example-eigen} is given by
 $\phi_{j}=(1+(1-a_0)e^{-\mu}\frac{1-e^{2\mu j}}{1-e^{-2\mu}})e^{-\mu j}$ for $j<0$ and $\phi_{j}=e^{-\mu j}$ for $j \geq 0$. Note that if $a_0 \leq 1$, $\phi_{j}>0$ for all $j \in \ZZ$. That means the positive eigenvectors always exist for $a_0 \leq 1$ and so do the transition fronts of speed c no less than $c^*$. The minimal speed $c^*$ is given by $\ds c^*=\frac{e^{\mu^*}+e^{-\mu^*}-1}{\mu^*}=\inf_{\mu>0}\frac{e^{\mu}+e^{-\mu}-1}{\mu} \approx 2.073$ at $\mu^* \approx 0.9071$.

For $a_0>1$, $\phi_{j}>0$ for all $j \in \ZZ$ whenever $a_0 \leq e^{\mu}-e^{-\mu}+1$, which implies that
 $\mu \geq ln[\frac{-(1-a_0)+\sqrt{(1-a_0)^2+4}}{2}]$ that gives the $\hat{\mu}=ln[\frac{-(1-a_0)+\sqrt{(1-a_0)^2+4}}{2}]$. By $\lambda=\lambda(\hat{\mu})$, we have $\lambda=\sqrt{(1-a_0)^2+4}-1$. On the interval [$\hat{\mu},\mu^{*}$] whenever $\hat{\mu} \leq \mu^{*}$, the speed is well defined by $c=\frac{\lambda(\mu)}{\mu}=\frac{e^{\mu}+e^{-\mu}-1}{\mu}$ and we can have both a minimal and a maximal speed on this closed interval. Outside this interval for $\mu < \hat{\mu}$, since the components of the eigenvector are mixed with negative and positive signs, we fail to obtain the transition fronts. If $\hat{\mu} > \mu^{*}$, that is, $a_0> e^{\mu^*}-e^{-\mu^*}+1$, then the existence interval [$\hat{\mu},\mu^{*}$] will be empty.

 In summary, we have the following facts,
\begin{itemize}[leftmargin=*]
\item[(1)] If $a_{0}\leq 1$, then $\lambda=1$. In this case, $\hat{c}=\infty$, that is, the existence interval of speeds for transition fronts is $[c^*,\infty)$.
\item[(2)] If $a_{0}>1$, then $\lambda>1$. If $\lambda < \lambda^{*}$, then transition front exists for any speed $c \in [c^{*}, \hat{c}]$. However, transition fronts do not exist under three cases: $c<c^*$, $c>\hat{c}$ and $\lambda>\lambda^*$. The $c^*$, $\hat{c}$, $\lambda$ and $\lambda^*$ are given by as follows:
\subitem[i] The minimal wave speed $c^*$ is given by $$c^*=\frac{e^{\mu^*}+e^{-\mu^*}-1}{\mu^*}=\inf_{\mu>0}\frac{e^{\mu}+e^{-\mu}-1}{\mu} \approx 2.073 \text{ at } \mu^* \approx 0.9071.$$
\subitem[ii] The maximal wave speed $\hat{c}$ is given by $$ \hat{c}=\frac{e^{\hat{\mu}}+e^{-\hat{\mu}}-1}{\hat{\mu}}\text{ at }\hat{\mu}=ln[\frac{-(1-a_0)+\sqrt{(1-a_0)^2+4}}{2}].$$ Note that $a_0>1$, $\hat{c}$ depending on $a_0$ is finite, that is, $\hat{c}< \infty$.
\subitem[iii] $\lambda=\sqrt{(1-a_0)^2+4}-1$ and $\lambda^* =e^{\mu^*}+e^{-\mu^*}-1 \approx 1.8808$. No transition fronts exist for $\lambda>\lambda^*$. Under this case, we must have $a_0>e^{\mu^*}-e^{-\mu^*}+1\approx 3.073$. 
\end{itemize}
\section{Concluding remarks}
We studied the existence and non-existence of transition fronts for monostable lattice differential equations in locally spatially inhomogeneous patchy environments. We collected fundamental tools such as discrete heat kernel estimates and discrete parabolic Harnack inequality. We proved that Poincar$\acute{e}$ inequality holds on a 2-regular graph and so does discrete parabolic Harnack inequality. Under the assumptions (H1)-(H2), there is a positive principal eigenvector for $\lambda^*>\lambda(\mu)>\lambda$. This positive principal eigenvector is the main ingredient in constructions of super/sub-solutions. The right end (i.e. $j>N$) of positive principal eigenvector is one, that coincides with the principal eigenvector in homogeneous media. There are significant differences on the middle localized perturbation part (i.e, $j \in [-N,N]$) and the left end (i.e. $j<-N$). However, this impact declines to $0$ for $j<-N$ as $|j| \to \infty$. With comparison principles and the super/sub-solutions, we obtained the transition fronts of mean wave speed on a finite range $(c^*,\hat c]$ and then pass the limit to have the case of $c=c^*$. For $c \in (c^*,\hat c]$, the profiles of transition fronts are highly related to the graphs of super-solutions $e^{-\mu(j-ct)}\phi_j^{\mu}$. Note that, in the right end for $j > N$, we have $e^{-\mu(j-ct)}$ that is moving at the exact speed $c$. For $j < N$, the profiles will change with amplitude $\phi_j^{\mu}$. If $c \in (c^*,\hat c)$ and $j << -N$, $\ds\lim_{j \to \infty}\phi_j^{\mu}=l>0$, that means they are essentially constant profiles $l e^{-\mu(j-ct)}$. For $c=\hat c$ and $j << -N$, $\ds\lim_{j \to \infty}\phi_j^{\mu}=0$, the amplitudes keep decreasing to $0$ as $j \to -\infty$.

We proved that the transit fronts if exist must own the exponential tail properties. There are no transition fronts at all if $\lambda>\lambda^*$, the mean wave speed is slower than the minimal speed $c^*$, or bigger than the maximal wave speed $\hat c$. The strongly localized spatial inhomogeneous patchy environments prevent the existence of transition fronts. The proof of minimal wave speed $c^*$ follows from the work of Shen and Kong (\cite{KoSh1,KoSh2}), where they also studied the localized perturbation with periodic media for both nonlocal problem and lattice differential equations. The proof of maximal wave speed $\hat c$ relies heavily on the fundamental tools discrete heat kernel estimates, comparison principles and discrete parabolic Harnack inequality. We leave the uniqueness and stability of transition fronts to \eqref{main-eq} \cite{VVZhang} and transition fronts to lattice differential equations with the localized perturbation of periodic media for future study. 

\bigskip

\noindent\textbf{Acknowledgements.} Van Vleck was supported in part by NSF grant DMS-1714195.

\end{document}